\newtheorem{theorem}{Theorem}[section]
\newtheorem{proposition}{Proposition}[section]
\newtheorem{corollary}{Corollary}[section]
\newtheorem{lemma}{Lemma}[section]
\newtheorem{remark}{Remark}[section]
\newtheorem{definition}{Definition}[section]
\numberwithin{equation}{section}
\newcommand{\Frac}[2]{\frac{\textstyle #1}
                           {\textstyle #2}}
\newcommand{\dist} {{\rm dist}}
\newcommand{\diag} {{\rm diag}}
\newcommand{\be}{\begin{equation}}
\newcommand{\ee}{\end{equation}}
\newcommand{\beq}{\begin{equation}}
\newcommand{\eeq}{\end{equation}}
\newcommand{\ba}[1] {\begin{array}{ #1 }}
\newcommand{\ea}{\end{array}}
\newcommand{\bc}{\begin{center}}
\newcommand{\ec}{\end{center}}
\newcommand{\bt}[1] {\begin{tabular} {#1} }
\newcommand{\et}{\end{tabular}}
\newcommand{\tpi} { \tilde{\pi} }
\def\vU {\vec{U}}
\newcommand{\cA} {{\mathcal A}}
\newcommand{\cC} {{\mathcal C}}
\newcommand{\cF} {{\mathcal F}}
\newcommand{\oG} {{\mathcal G}}
\def\mcG{{\bf R}}
\newcommand{\cJ} {{\mathcal J}}
\newcommand{\cK} {{\mathcal K}}
\newcommand{\cL} {{\mathcal L}}
\newcommand{\cM} {{\mathcal M}}
\newcommand{\cN} {{\mathcal N}}
\newcommand{\cR} {{\mathcal R}}
\newcommand{\cV} {{\mathcal V}}
\newcommand{\cNon} {{\mathcal X}}
\newcommand{\eps} {{\epsilon}}
\newcommand{\vp} {{\vec{p}}}
\newcommand{\vq} {{\vec{q}}}
\def\tp{{\tilde{p}}}
\def\tq{{\vec{q}}}
\def\vchi{{\vec{\chi}}}
\def\vxi{{\vec{\xi}}}
\def\bR{{\mathbb R}}
\def\bC{\mathbb C}
\def\tL{\tilde{L}}
\def\oalpha{{\gamma}}
\newcommand{\leavethisout}[1] {}
\begin{document}

\title{Adiabatic stability under semi-strong interactions:\\ The weakly damped regime}
\author{Thomas Bellsky}
\address{School of Mathematics and Statistics, Arizona State University, Tempe, AZ 85287}
\email{bellskyt@asu.edu}

\author{Arjen Doelman}
\address{Mathematisch Instituut, Universiteit Leiden,
P.O. Box 9512, 2300 RA Leiden, Netherlands}
\email{doelman@math.leidenuniv.nl}

\author{Tasso J. Kaper}
\address{Department of Mathematics \& Center for BioDynamics,
Boston University, 111 Cummington Street,
Boston, MA 02215}
\email{tasso@math.bu.edu}

\author{Keith Promislow}
\address{Department of Mathematics, Michigan State University, East Lansing, MI 48824}
\email{kpromisl@math.msu.edu}

\subjclass[2010]{35B25, 35K45, 35K57}

\keywords{Reaction-diffusion system, semi-strong interaction, renormalization group, nonlocal eigenvalue problem (NLEP), normal hyperbolicity.}


\begin{abstract}
We rigorously derive  multi-pulse interaction laws for the semi-strong interactions in a family of singularly-perturbed
and weakly-damped reaction-diffusion systems in one space dimension. Most significantly, we show the existence of a manifold of 
quasi-steady $N$-pulse solutions and identify a ``normal-hyperbolicity'' condition which balances the 
asymptotic weakness of the linear damping against the algebraic evolution rate of the multi-pulses. Our main result is the
adiabatic stability of the manifolds subject to this normal hyperbolicity condition. More specifically,
the spectrum of the linearization about a fixed $N$-pulse configuration contains essential spectrum that is 
asymptotically close to the origin as well as {\em semi-strong} eigenvalues which move at leading order as the pulse positions
evolve. We characterize the semi-strong eigenvalues in terms of the spectrum of an explicit $N\times N$ matrix, and rigorously 
bound the error between the $N$-pulse manifold and the evolution of the full system, in a polynomially weighted space, so long as the semi-strong spectrum 
remains strictly in the left-half complex plane, and the essential spectrum is not too close to the origin.
\end{abstract}

\maketitle

\pagestyle{myheadings}
\thispagestyle{plain}
\markboth{T. Bellsky, A. Doelman, T. Kaper, and K. Promislow}{Adiabatic Stability under Semi-Strong Interactions}

\section{Introduction} 
It is not uncommon that evolutionary partial differential equations have finite dimensional sub-manifolds which are approximately invariant, and
robustly stable, in the sense that initial data which starts close to the sub-manifold remains close. For these proximal orbits, the full dynamics typically can 
be reduced to a finite dimensional system, often with nontrivial dynamics of its own.  Asymptotic analysis is adept at constructing such manifolds;
determining the stability of the sub-manifold is more problematic, as the natural object which arises in the linear stability theory may have non-trivial time-dependence. The road from a time-dependent linear operator to the properties of its semi-group is long and hard.  A tractable sub-case arises when the flow on the sub-manifold is slow in comparison to the exponential rates which characterize the decay of proximal orbits towards the sub-manifold; in this case we say the sub-manifold is normally hyperbolic. It is natural to investigate the case in which the normal hyperbolicity is lost due to a decrease in the exponential decay rates of the sub-manifold. We address this question within the context of semi-strong multi-pulse interactions in a weakly-damped reaction diffusion system.
  
The study of pulse interaction in diffusive systems has a long history, in particular the activator-inhibitor systems modeled by the 
Gierer-Meinhardt \cite{GM} and the Gray-Scott \cite{GS} equations have spawned a substantial literature.  
These reaction-diffusion systems are comprised of two chemicals which feed an autocatalytic reaction that drives pattern formation.
 Our analysis is particularly motivated by the Gray-Scott Model
\beq
\label{eqGS}
\begin{aligned}
 U_t =& U_{xx} + A(1 - U) - UV^2, \\
V_t =& DV_{xx} - BV + UV^2,
\end{aligned}
\eeq
where $U$ and $V$ denote the concentrations of the chemical species. The semi-strong regime of the Gray-Scott equations, presented in \cites{DGK-98, GS1, DGK-02}, occurs when the $U$ component experiences strong diffusion, but is weakly damped and strongly forced. In particular, there is a (nontrivial) balance which maintains the  two components at an $O(1)$ level within the spatialdomain where the slowly diffusing component is present. This leads to a rich stability structure in which the localized species manifests a long-range interaction through the delocalized species. This simple system affords an ideal arena
for the study of reductions of infinite dimensional dynamical systems to finite dimensional sub-systems.  This paper studies a generalization of the
rescaled system. This generalization maintains the aforementioned balance while enjoying a variable rate of linear damping and a homogeneous polynomial nonlinearity in $\vU=(U,V)^T$, 
\beq
\label{UV}
\left.
\begin{aligned}
 U_t &=& \epsilon^{-2}U_{xx}- \epsilon^{\alpha}\mu U-\epsilon ^{-1}U^{\alpha_{11}}V^{\alpha_{12}}+\eps^{\alpha/2}\rho, \\
 V_t &=& V_{xx}-V+U^{\alpha_{21}}V^{\alpha_{22}},\hspace{1.15in}
\end{aligned} \right\} =: \cF(\vU),
\eeq
where $ \alpha_{11},$ $\alpha_{21} \geq 0$ and $\mu >0$, $\alpha > 0$, $\rho\geq 0$, $\alpha_{12} >1$ and $\alpha_{22}> 1$. 
For  $0<\epsilon \ll 1$ the system affords a natural competition between  long-range, $U$, and short-range, $V$, interactions. 
Such competitions are wide-spread in physical settings, arising for example from the  balance between entropic and electrostatic interactions in ionic solutions which 
drive morphology generation in solvated charged polymers, \cites{GP-11, Polymer-12, DaiP-12, DP-12}. 

We address the existence, dynamics, and particularly the adiabatic stability of $N$-pulses in the semi-strong interaction regime. 
This regime features pulse-like structures in the localized species whose positions, amplitudes, and stability all evolve at leading order subject to an 
effective mean-field generated by the pulses coupling to the delocalized,  rapidly-diffusing species. The novelty of our study lies in the asymptotic approach 
of the essential spectrum to the origin,  characterized by the weak linear-damping rate, $\eps^\alpha\mu$, which may lead to a loss of normal hyperbolicity (adiabaticity) 
as the slow decay associated to the essential spectrum competes with the perturbations generated by the pulse evolution.

For fixed $N\in\mathbb{N}_+$, we rigorously derive the existence and adiabatic stability of semi-strong $N$-pulse configurations. 
This study requires a careful  analysis of the  linearization about the $N$-pulse configurations, including resolvent and semi-group estimates as well as a characterization of the point spectrum.  Generalizing prior results, \cites{GS1, IndRD, ironward},  we show 
that there is a set of point spectra,  the {\em semi-strong spectrum},  which evolve at leading order in conjunction with the localized pulse configuration,
and can be characterized in terms of the eigenvalues of an explicit $N\times N$ matrix.  Extending these results, 
we rigorously show that so long as the finite-rank spectrum remains uniformly within the left-half plane and the linear damping is stronger
than a critical value, then the $N$-pulses generate an adiabatically stable manifold which affords a leading order description of the 
pulse interactions. A key element of the analysis is the development of semi-group estimates, via a renormalization group (RG) approach, 
corresponding to the {\sl evolving} pulse configurations. This result requires a form of normal hyperbolicity for the manifold of semi-strong $N$-pulses, 
balancing the flow on the manifold against the weak linear decay, see Theorem\, \ref{thm:main} and following discussion.

There is a developed literature on the stability of viscous shocks and traveling waves, for which the essential spectrum touches 
the origin, see \cites{SS-99, SS-00, Zumbrun-02, GUS-04, Howard-07, Zumbrun-11} and references therein. 
We emphasize two important distinctions between our results and these two bodies of work. 
For the nonlinear conservation laws, \cites{Zumbrun-02, Howard-07, Zumbrun-11}, the stability estimates do not 
``close'' in the following sense: initial perturbations are considered in a space, $L^1(\bR)$ for example, which is not controlled by 
the estimates at later times -- the decay is in a norm which does not control the initial perturbation. As a consequence the process cannot be 
iterated; one cannot restart the perturbation analysis at a later time in the flow. For the traveling waves under essential bifurcation, \cites{SS-99, SS-00, GUS-04},
the solution oscillates temporally in a neighborhood of a fixed structure: the system is controlled by a single, temporally fixed linearized operator, thereby avoiding the issue of 
competition between decay rates and secular forcing arising from a time-dependent linearization. Neither family of results extends trivially to encompass the semi-strong interactions, in which the underlying structure, the $N$-pulse positions and amplitudes, evolve at leading order generating concomitant changes in the associated linearized operators. 
In the setting of \eqref{UV}, one is faced either with developing semi-group estimates on a time-dependent family of operators, as in \cite{ScheelWright}, or in the approach 
we follow here: taking the linearized operators to be piece-wise constant in time with a renormalization of the flow at each jump in the linearization.  
This later, iterative approach requires estimates on the decay in the same norm used to control the initial perturbations.

\subsection{Prior results on semi-strong interactions}

The semi-strong interaction regime is intermediate between the weak interaction regime and the strong interaction regime. 
In the weak regime, the pulses are localized in each component, which returns to its equilibrium value between adjacent pulses.
The mutual interaction of localized structures is exponentially weak in the pulse separation distance, and consequently there is no 
leading order influence of pulse location on the shape or the stability of the pulses.   The weak interaction regime has been well-studied in reaction-diffusion systems, see \cites{EiRD, EiPP, RGProm, Sandstede}.  
The strong interaction regime arises when the pulses are sufficiently proximal that the values of their localized components do not return to
equilibrium values between the pulses. This leads to self-replication, collision, annihilation, and other strongly non-adiabatic behaviors.  
There has been little theoretical investigation of the  strong interaction regime, which is typically investigated using numerical techniques.
In the semi-strong regime the pulses have both localized and delocalized components, with the delocalized components varying slowly over the support
of the localized components. This regime has been studied both formally \cites{GS1, S-SIRD, ironward} and rigorously \cites{RGG-M, RGPVH}. 

These previous works have largely focused on the Gierer-Meinhardt and the Gray-Scott models.  In \cite{ironward}, a formal study of the $N$-pulse semi-strong interaction regime for the generalized Gierer-Meinhardt model was presented. In particular, expressions for 
the semi-strong spectrum and the ordinary differential equations for the dynamics of $N$-pulse configurations were derived.  
In \cite{S-SIRD}, a general system that includes both the Gierer-Meinhardt and the Gray-Scott model was studied. The semi-strong two-pulse interaction was investigated, where formal results for the asymptotic stability were determined in particular regimes, along with ordinary differential equations governing the dynamics of pulse positions. The extension of these results to the $N$-pulse was also discussed. However, the derivation of conditions under which the $N$-pulse manifold is adiabatically attractive under the full flow of the PDE is outside the scope of the prior work.

In \cite{RGG-M}, the $2$-pulse semi-strong interaction regime was rigorously studied for the regularized Gierer-Meinhardt model,
which has a strong linear damping rate, extending the renormalization group approach developed in \cite{RGProm}  
to obtain appropriate semi-group estimates on families of weakly time-dependent linear operators. 
 In \cite{RGPVH}, the renormalization group approach was used to establish the adiabatic stability of $2$, $3$, and $4$-pulse configurations
within the semi-strong interaction regime for a three-component system with two inhibitor components and one strongly-linearly-damped activator component. 
The renormalization group techniques have also been used to study quasi-steady manifolds in noisy systems, \cite{Guha}, and coupled
dispersive-diffusive systems, \cite{MooreProm}.

The homogeneous nonlinearity considered in \eqref{UV} is characterized by the quantity
\be
\label{theta-def} \theta := \alpha_{11}-\Frac{\alpha_{12}\alpha_{21}}{\alpha_{22}-1}.
\ee
In particular, the admissibility conditions typically require $\theta\leq  0,$ 
see Remark \ref{R:theta}. The scaled Gray-Scott system, in \cites{DGK-98, GS1, DGK-02}, corresponds to $\theta=-1$, however its  linear damping rate, $\alpha=\frac32$, exceeds the critical value, $\alpha=\frac12$, permitted by Theorem\,\ref{thm:main}. This suggests that the evolution of its $N$-pulse solutions may not be describable by an $N$-dimensional system parameterized solely by the pulse locations. It may be necessary to include the impact of the essential spectrum -- which may manifest itself as a long, low shelf (asymptotically small in $L^\infty$ but large in $L^1$) which surrounds the pulse region. A detailed description of this supercritical regime is an intriguing open problem that is outside the scope of this work.

 \begin{figure}[ht]
\bc
\includegraphics[width=3.5in]{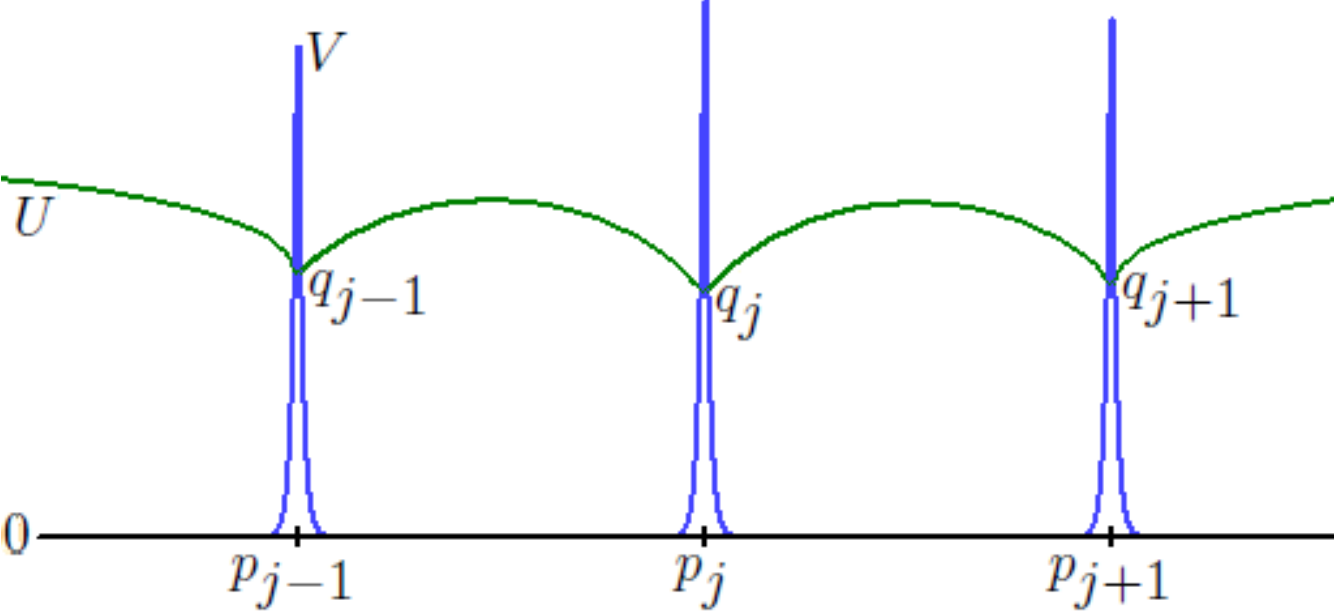}
\caption{A typical $N$-pulse configuration for the coupled system \eqref{UV}. The $V$ component is localized at the pulse positions $p_j$.  The $U$ component has an approximately constant value, $q_j$, on the narrow pulse intervals, and is slowly varying in between the pulses,  reaching its equilibrium value of $O(\eps^{-\alpha/2})$ as $x\to\pm\infty$.}
\label{f:N-pulse}
\ec
\end{figure}

\subsection{Presentation of the main results}
Our first result is the existence of the semi-strong $N$-pulse configurations parameterized by the localized pulse positions,
$\vp=(p_1, \cdots, p_N)^T\in \bR^N$. In general, a manifold formed by the graph of a function $\Phi=\Phi(\vp\,)$ is invariant under
a flow $U_t=\cF(U)$ on a Banach space $X$ if and only if
\beq\label{involution}  (I-\pi_\vp)\, \cF\!\left(\Phi(\vp\,)\right) =0, \eeq
where $\pi_\vp$ is the projection onto the tangent plane of the manifold at $\vp.$  
Indeed, the classic manifold formed by the translates of a traveling wave solution is invariant precisely because its wave-shape satisfies
the so-called traveling wave ODE,
$$c\Phi_x =\cF(\Phi),$$
for wave speed $c$. The manifold is parameterized by the translates $\cM:=\{\Phi(\cdot-p)\bigl| p\in\bR\}$ of $\Phi$, and 
the projection off of the tangent plane of $\cM$ has $\Phi_x$ in its kernel. In cases not arising from a natural symmetry of the flow, such as translation or rotation, construction of an invariant manifold is a non-trivial endeavor, see \cite{Bates2} for example. 

We do not perform this calculation, rather we construct a manifold with boundary, which is approximately invariant. 
More specifically, we construct a manifold for which the left-hand side of (\ref{involution}) is sufficiently small in an appropriate norm, 
and which has a thin, forward invariant neighborhood which attracts the flow from a thicker neighborhood -- at least up to times that the 
flow hits the boundary of the manifold. In Theorem\,\ref{thm:mean_field}, we reduce this construction to the solution of 
an $N\times N$ system of nonlinear equations  which connects the positions of the localized pulses, $\vp$, to the amplitudes 
of the delocalized component at the pulse position $q_i=U(p_i)$  for $i=1, \cdots, N$.  The family of solutions, $\vq=\vq\,(\vp\,)$ gives rise to 
the semi-strong $N$-pulse configuration
\beq
\label{Phiqs} \Phi(x;\vp\,) = \left(\ba c \Phi_1(x;\vp\,) \\ \Phi_2(x;\vp\,) \ea \right),
\eeq
defined in \eqref{Phi-def}.  In Proposition\,\ref{prop:4} we characterize the 
spectrum of the linearization $L_\vp$ of \eqref{UV} about $\Phi$, 
showing that the point spectrum consists of $N$ eigenvalues localized near the origin together with 
the {\em semi-strong} spectrum, denoted $\sigma_{\rm ss}(\vp\,)$, which
can move at leading order as the pulse positions $\vp$ evolve. Moreover, the locations of the
semi-strong spectrum are determined by the eigenvalues of an $N\times N$ matrix  $\cN_\lambda$ defined in (\ref{N}). 
This motivates the following definition.
\begin{figure}[ht]
\bc
\includegraphics[width=3.0in]{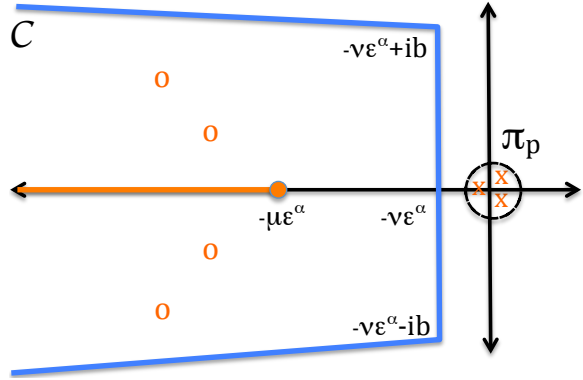}
\caption{\label{f:contour} The contour $\cC$ introduced in \eqref{contour}. The essential spectrum of the linearization $L_{\vec{p}}$ about
an $N$-pulse solution, see \eqref{lin-op}, extends from $(-\infty, -\mu\eps^\alpha]$ (orange line) while the point spectrum consists of 
$N$ broken translational eigenvalues near the origin (orange X), with associated spectral projection $\pi_\vp$, 
together with a collection of semi-strong spectrum (orange o) which move at leading order as the pulse configuration evolves.  
For an admissible pulse configuration, the semi-strong spectrum lies to the left of the contour $\cC$.}
\ec
\end{figure}

\begin{definition}\label{def-one}
Fix $\eps>0$.  An open, connected set $\cK\subset \bR^N$ is an admissible family of $N$-pulses if there exist $b, C, \ell>0$  and 
$\nu\in(0,\mu)$ such that
 the following hold.
 \begin{enumerate}
 \item  For all $\vp \in\cK$ the semi-strong spectrum $\sigma_{\rm ss}(\vp\,)$ lies to the
 left of the contour $\cC \subset \mathbb{C}$, of the form
\beq
\label{contour} \cC = \cC_\nu \cup \cC_{-}\cup \cC_{+},
\eeq
where $\cC_\nu=\left\{ -\eps^\alpha\nu+is \big{|}s \in [-b,b] \right\}$ and 
$\cC_{\pm}=\left\{-\eps^\alpha\nu \pm ib +se^{\pm \frac{i5\pi}{6}}\big{|}s \in [-\infty,0] \right\}$, see Figure\,\ref{f:contour}. 

\item
For all $\lambda\in\cC$ and $\vp\in\cK$,
\beq  \label{Nlam-est}
 \left| \left( I+\cN_\lambda\right)^{-1}\right| \leq C\left(1+\frac{\eps}{|k_\lambda|}\right)^{-1}.
 \eeq
 where $k_\lambda:=\eps\sqrt{\lambda+\eps^\alpha\mu}$ is a scaled distance of $\lambda$ to the branch point
 of the essential spectrum of $\cL_\vp.$ 
 \item
  $\cK\subset\cK_\ell$ where
 \beq \label{cKell-def}
 \cK_\ell :=\left\{\vp\in \bR^N \Bigl| \Delta p_i:= p_{i+1}-p_{i} \geq \ell |\ln\eps|\right\}, 
 \eeq
 for $i=1, \cdots, N-1.$
 \end{enumerate}
 \end{definition}

To each admissible family of pulse configurations $\cK\subset \mathbb{R}^N$, we associate the $N$-pulse manifold
\beq
\label{qsManifold} \cM:=\left\{\Phi \left(\cdot;\vp\,\right)\Bigl |\,\, \vp \in \cK \right\}.
\eeq
Our first result is that a non-trivial portion of the semi-strong $N$-pulse interaction regime is admissible if the 
corresponding single-pulse is linearly stable.

\begin{proposition} {\bf Admissibility.}
\label{p:admissible} Consider the system \eqref{UV}. Let the associated single-pulse solution defined by \eqref{Phi-def} be linearly stable, 
that is, if except for a simple translational eigenvalue at the origin, the point spectrum of the  linearization about a single-pulse lies 
to the left of a contour $\cC_{\nu,b}$ for some $\nu, b>0.$ Then for every $N\in \mathbb{N}_+$  there exists $\eps_0>0$ and
such that for all $\delta>0$ sufficiently large and for all $0<\eps<\eps_0$ the portion of the semi-strong domain
satisfying
\beq \label{cK-delta}
 \cK := \left\{ \vp\in \bR^N \Bigl| \, \Delta p_i \geq \delta \eps^{-(1+\alpha/2)}\,\,{\rm for}\,\,  i=1, \cdots, N-1 \right\},
 \eeq
 is admissible. 
\end{proposition}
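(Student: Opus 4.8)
The plan is to check the three requirements of Definition~\ref{def-one} for the set $\cK$ of \eqref{cK-delta}, reading the quantifiers as: for every sufficiently large $\delta$ there is $\eps_0=\eps_0(N,\delta)>0$ so that $\cK$ is admissible whenever $0<\eps<\eps_0$. Requirement (3), $\cK\subset\cK_\ell$ of \eqref{cKell-def}, is immediate with $\ell=1$, since $\delta\eps^{-(1+\alpha/2)}\ge|\ln\eps|$ for all small $\eps$; it only pins down $\eps_0$ at the end. The real content is requirements (1) and (2), both of which are statements about the matrix $\cN_\lambda$ of \eqref{N}, and the whole strategy is to treat $\cN_\lambda$, for configurations in $\cK$, as a small perturbation of a scalar multiple of the identity built from the \emph{stable single pulse}.

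First I would unpack the structure of $\cN_\lambda$ from Proposition~\ref{prop:4}. Each entry splits into a ``self'' part, which depends only on the local pulse profile and its amplitude $q_i=U(p_i)$, and a ``coupling'' part transmitted between pulses $i\ne j$ by the Green's function of $\eps^{-2}\partial_{xx}-(\lambda+\eps^\alpha\mu)$ at $(p_i,p_j)$, hence of size $\lesssim e^{-\mathrm{Re}\,k_\lambda\,|p_i-p_j|}$ relative to the self part, with $k_\lambda=\eps\sqrt{\lambda+\eps^\alpha\mu}$. On the contour $\cC$ of \eqref{contour} one has $\mathrm{Re}(\lambda+\eps^\alpha\mu)\ge\eps^\alpha(\mu-\nu)>0$ along $\cC_\nu$ and $\arg(\lambda+\eps^\alpha\mu)$ confined to a compact subinterval of $(-\pi,\pi)$ along $\cC_\pm$, so $\mathrm{Re}\,k_\lambda\ge c\,\eps^{1+\alpha/2}$ on all of $\cC$ for some $c=c(\mu,\nu,b)>0$; together with $|p_i-p_j|\ge\delta\eps^{-(1+\alpha/2)}$ throughout $\cK$ this bounds the coupling part of $\cN_\lambda$ by $Ce^{-c\delta}$ relative to the self part, uniformly in $\lambda\in\cC$ and $\vp\in\cK$ (wider gaps only improve this). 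For the self parts, the diagonal entry $(\cN_\lambda)_{ii}$ coincides with the single-pulse NLEP multiplier $\cN^{(1)}_\lambda(q_i)$, viewed as a function of the local amplitude, up to an $O(e^{-c\delta})$ correction from the neighbours; and by Theorem~\ref{thm:mean_field} the mean-field system governing $\vq=\vq(\vp\,)$ itself nearly decouples at these separations, so $q_i=q_\ast+O(e^{-c\delta})$ uniformly on $\cK$, with $q_\ast$ the amplitude of the stable single pulse. Using the smoothness of $\cN^{(1)}_\lambda$ in the amplitude (uniform for $\lambda\in\cC$, the weight $(1+\eps/|k_\lambda|)^{-1}$ being respected by the amplitude derivative), these combine to $\cN_\lambda=\cN^{(1)}_\lambda(q_\ast)\,I+E_\lambda$ with $|E_\lambda|\le Ce^{-c\delta}$ uniformly on $\cC\times\cK$.

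Next I would bring in the stability hypothesis. Shrinking $\nu$ if necessary so that $\nu\in(0,\mu)$ --- legitimate, since no single-pulse point spectrum lies between the smaller and the larger contour, and this places $\cC$ strictly to the right of the branch point $-\eps^\alpha\mu$ --- linear stability says the only point spectrum of the single-pulse linearization on or to the right of $\cC$ is the simple translational eigenvalue at the origin, which is invisible to $\cN^{(1)}_\lambda$ because it lives in the odd subspace on which the nonlocal term vanishes. Hence $1+\cN^{(1)}_\lambda(q_\ast)\ne0$ on $\cC$, and the single-pulse resolvent estimate --- the quantitative refinement of non-vanishing that comes from the explicit near-branch-point form of $\cN^{(1)}_\lambda$ --- gives $\bigl|1+\cN^{(1)}_\lambda(q_\ast)\bigr|^{-1}\le C_0\bigl(1+\eps/|k_\lambda|\bigr)^{-1}$ on $\cC$. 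Since $\bigl(1+\eps/|k_\lambda|\bigr)^{-1}\le1$, choosing $\delta$ large enough that $C_0Ce^{-c\delta}<\tfrac12$ forces $\bigl|\bigl(1+\cN^{(1)}_\lambda(q_\ast)\bigr)^{-1}E_\lambda\bigr|<\tfrac12$, and the Neumann series for the factorisation $I+\cN_\lambda=\bigl(1+\cN^{(1)}_\lambda(q_\ast)\bigr)\bigl(I+\bigl(1+\cN^{(1)}_\lambda(q_\ast)\bigr)^{-1}E_\lambda\bigr)$ yields $\bigl|(I+\cN_\lambda)^{-1}\bigr|\le2C_0\bigl(1+\eps/|k_\lambda|\bigr)^{-1}$ uniformly on $\cC\times\cK$ --- this is \eqref{Nlam-est}, requirement (2). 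For requirement (1) I would run the same comparison over the open region to the right of $\cC$ instead of on $\cC$: on compact subsets there the single-pulse factor stays bounded (no poles, by stability), while for $|\lambda|\to\infty$ one has $\cN_\lambda\to0$ because $|k_\lambda|\to\infty$ and the coupling Green's function decays; so $I+\cN_\lambda$ is invertible throughout that region, and $\sigma_{\rm ss}(\vp\,)=\{\lambda:\det(I+\cN_\lambda)=0\}$ lies strictly to the left of $\cC$. Choosing $\eps_0(N,\delta)$ small also secures (3), completing the verification.

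I expect the genuine obstacle to be making the \emph{weighted} single-pulse bound $\bigl|1+\cN^{(1)}_\lambda\bigr|^{-1}\le C_0\bigl(1+\eps/|k_\lambda|\bigr)^{-1}$ available and then propagating it uniformly to $\cN_\lambda$ over the \emph{non-compact} configuration set $\cK$ (gaps may be arbitrarily large) and along the \emph{non-compact} contour $\cC$. Near the branch point the weight degenerates like $\eps^{\alpha/2}$, so one must know --- from the precise near-branch-point asymptotics of the single-pulse NLEP underlying Proposition~\ref{prop:4} --- that $1+\cN^{(1)}_\lambda$ does not decay faster than $\eps/|k_\lambda|$ there (this is where linear stability, rather than mere non-vanishing, is used), and one must show the $N$-pulse remainder $E_\lambda$, comprising the amplitude shifts $q_i-q_\ast$ and the inter-pulse coupling, is controlled uniformly down to that scale. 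This is exactly the purpose of the admissible separation $\Delta p_i\ge\delta\eps^{-(1+\alpha/2)}$: measured against the $O(\eps^{-(1+\alpha/2)})$ decay length of the delocalized component it is $\delta$ decay lengths, which makes every $N$-pulse correction $O(e^{-c\delta})$ uniformly, so that for $\delta$ large $\cN_\lambda$ is spectrally slaved to $N$ decoupled copies of the stable single-pulse multiplier.
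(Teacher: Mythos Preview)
Your approach is essentially the paper's: reduce $\cN_\lambda$ on $\cK$ to $N$ decoupled copies of the single-pulse multiplier and then invoke single-pulse stability for the weighted bound \eqref{Nlam-est}. The paper organises the second step as a two-zone argument --- near the branch point it shows directly that every eigenvalue of $\cN_\lambda$ has modulus $\gtrsim\eps/|k_\lambda|\gg1$ using $E(0)=\theta\,\overline{\phi_0^{\alpha_{12}}}\neq0$ and the explicit lower bound on $\det\oG_N$ from \eqref{GN-det}, while away from the branch point it runs the diagonal perturbation \eqref{delta-est} --- whereas you fold both zones into a single weighted Neumann series. Either packaging works.

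One imprecision to fix: your additive bound $|E_\lambda|\le Ce^{-c\delta}$ \emph{uniformly on $\cC$} is not correct as written. From \eqref{Nmatrix} every entry of $\cN_\lambda$ carries the prefactor $\eps/k_\lambda$, which is of order $\eps^{-\alpha/2}$ at the closest approach of $\cC$ to the branch point; so both the off-diagonal coupling and the diagonal amplitude shifts give $|E_\lambda|\lesssim\bigl(1+\eps/|k_\lambda|\bigr)e^{-c\delta}$, not $e^{-c\delta}$. Your Neumann step ``$(1+\eps/|k_\lambda|)^{-1}\le1$, so $C_0Ce^{-c\delta}<\tfrac12$ suffices'' therefore breaks near the branch point. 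The repair is exactly what your final paragraph anticipates: keep the full weight on both factors, using $\bigl|(1+\cN^{(1)}_\lambda)^{-1}\bigr|\le C_0\bigl(1+\eps/|k_\lambda|\bigr)^{-1}$ together with $|E_\lambda|\le C\bigl(1+\eps/|k_\lambda|\bigr)e^{-c\delta}$, so the weights cancel and the product is $\le C_0Ce^{-c\delta}$ uniformly. With that correction your Neumann series goes through and yields \eqref{Nlam-est} with constant $2C_0$.
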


\begin{remark} \label{R:theta}
The single-pulse can be linearly stable only if $\theta<0.$ For $\theta>0$ the linearization about the single pulse has a real eigenvalue
which lies between the origin and the ground-state eigenvalue $\lambda_0>0$ of the operator $L_0$ defined in \eqref{L0-def}.
However there are semi-strong two-pulse regimes which are admissible, even when the underlying one-pulse is not linearly stable,
see \cite{GS1}. The restriction on $\delta$  in \eqref{cK-delta} specifically precludes the oscillatory instabilities which can arise through 
pulse-pulse interactions, see \cite{DGK-98} and Theorem 1 and Figure 3.1 of \cite{RGG-M} for examples of semi-strong spectra crossing 
into the right-half complex plane as the pulses become too close.  Pulses of this type typically have complex spectra; an examination of 
real eigenvalues is not sufficient for stability.
\end{remark}

The pulse dynamics are driven by the projection of the residual $\cF(\Phi)$ onto the $N$-dimensional space associated to
the broken translational eigenvalues clustered within $O(\eps^r)$ of the origin, where $\eps^r$ measures the
strength of the tail-tail interaction of the {\sl localized pulse component}, and the value of $r$ can be taken arbitrarily large 
by increasing $\ell$ in $\cK_\ell$.  The spectral projection onto the translational eigenvalues,  $\pi_{\vp}$ (see \eqref{pi-def}), induces the 
 complementary spectral space
\beq \label{Xvp} 
X_{\vp} = \left\{ \vU \,\Bigl| \, \|\vU\,\|_X < \infty, \,\, \pi_{\vp}\,\vU =0 \right\},
\eeq
where the $X$-norm, defined in \eqref{Xnorm}, is {\sl locally} polynomially weighted about the position of each pulse.   Our main result is the adiabatic stability of 
the admissible semi-strong $N$-pulse manifolds  in this norm, so long as the asymptotic damping is not too weak.

\begin{theorem} {\bf Adiabatic stability.}\label{thm:main}
Consider the system (\ref{UV}) with $\theta\neq 0$ satisfying the {\sl normal hyperbolicity condition} $\alpha\in[0,\frac12)$.  
Let $\cK$ be an admissible domain of 
pulse configurations and fix $\nu\in[0,\mu)$. Then the associated $N$-pulse manifold $\cM$ is {\sl adiabatically stable} in the norm $\|\cdot \|_X$, 
up to $O\!\left(\eps^{1-\alpha}\right)$. That
is there exists $M_0, d_0, \eps_0>0$ 
such that for all $\eps<\eps_0$  and any initial data $\vU_0:=(U_0, V_0)^T$ of the form
\beq\label{e:init-cond}
 \vU_0 = \Phi(x;\vp)+W_0(x),
 \eeq
with $\dist(\vp,\partial \cK)>d_0$ and $\|W_0\|\leq M_0\eps^\alpha|\ln \eps|^{-1},$ 
the solution of  \eqref{UV} can be uniquely decomposed for $t\in [0, T_b]$ as
\beq
\vec{U}(x,t)= \Phi(x;\vp(t))+W(x,t),
\eeq
where $\vp\in\cK$ is a smooth function of $t$, the remainder $W(\cdot,t) \in X_{\vp(t)}$ satisfies
\beq
\label{Main-bound} \|W(t)\|_X \leq M \left(e^{-\eps^{\alpha}{\nu t}}\|W_0\|_X + \eps^{1-\alpha}\right),
\eeq
and the time $T_b$ for the pulse configuration $\vp=\vp(t)$ to hit $\partial\cK$ satisfies $T_b\geq M_0 d_0\eps^{-1}.$ 
 Moreover, during this time interval the pulse configuration evolves at leading order according to
\beq
\label{Main-pulse} 
\dfrac{\partial \vec p}{\partial t} =\eps Q^{-1}\cA(\vp\,) \vq\,^{\theta} +O\!\left(\epsilon^{2-\alpha}, \eps\|W(t)\|_X, \|W(t)\|_X^2\right),
\eeq
where $Q(\vp\,)=\diag(\vq\,(\vp\,))$ is the diagonal matrix of amplitudes, and the anti-symmetric matrix $\cA$ is given by
\beq
\label{matrixA} 
[\cA]_{kj} = \frac{\alpha_{21}}{\alpha_{22}+1}\frac{ \overline{\phi_0^{\alpha_{12}}}\,\overline{\phi_0^{\alpha_{22}+1}}}{\|\phi_0^\prime\|^2_{L^2}} 
         \left\{ \ba {cc}  e^{-\epsilon^{1+\alpha/2}\sqrt{\mu}|p_k-p_j|} & k>j, \\ 
              0 & k=j, \\ 
          -e^{-\epsilon^{1+\alpha/2}\sqrt{\mu}|p_k-p_j|} & k<j,  \ea \right.
\eeq
in terms of $\vp$ and the pulse profile $\phi_0$ of \eqref{phi0-def} through the total mass of its powers, see \eqref{mass-notation}.
 \end{theorem}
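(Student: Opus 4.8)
The plan is to run a modulation-plus-renormalization argument. \emph{Step 1 (decomposition and modulation equations).} Imposing $W(\cdot,t)\in X_{\vp(t)}$, i.e. $\pi_{\vp(t)}W=0$, determines $\vp(t)$ uniquely and smoothly as long as $\|W\|_X$ is small: the map $\vp\mapsto \pi_\vp(\vU-\Phi(\cdot;\vp))$ has derivative a small perturbation of the invertible amplitude matrix $Q(\vp)$, so the implicit function theorem applies. Differentiating $\vU=\Phi(\cdot;\vp(t))+W$ in $t$ and writing $\cF(\Phi+W)=\cF(\Phi)+L_\vp W+\cN(W)$ yields the coupled system
$$ W_t = L_{\vp}W + (I-\pi_\vp)\cF(\Phi) + (I-\pi_\vp)\cN(W) + R_\vp, \qquad \pi_\vp W=0, $$
where $R_\vp$ collects the modulation-coupling terms (each small because $(I-\pi_\vp)D_\vp\Phi$ and $\partial_\vp\pi_\vp$ are), together with an $N$-dimensional equation for $\dot\vp$ obtained by applying $\pi_\vp$ to the same identity. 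Since $\pi_\vp$ is a near-translational spectral projection, both $\pi_\vp L_\vp$ and $(I-\pi_\vp)\cF(\Phi)$ carry smallness; solving the projected equation for $\dot\vp$ using $\pi_\vp D_\vp\Phi\approx Q$ and the leading residual supplied by Theorem\,\ref{thm:mean_field} gives \eqref{Main-pulse}, with the $O(\eps^{2-\alpha})$ error coming from the next order of the mean-field expansion and the $W$-errors from $\pi_\vp L_\vp W$ and $\pi_\vp\cN(W)$.

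\emph{Step 2 (frozen semigroup estimate --- the analytic heart).} For each fixed $\vp\in\cK$ I would prove $\|e^{L_\vp t}(I-\pi_\vp)\|_{X\to X}\le M e^{-\eps^\alpha\nu t}$ with $M$ independent of $\eps$ and of $\vp\in\cK$, starting from
$$ e^{L_\vp t}(I-\pi_\vp)=\frac{1}{2\pi i}\int_\cC e^{\lambda t}(\lambda-L_\vp)^{-1}\,d\lambda, $$
over the contour $\cC$ of Definition\,\ref{def-one}, which by admissibility sits strictly to the right of the essential spectrum and of $\sigma_{\rm ss}(\vp)$. The resolvent is built by solving the strongly diffusive $U$-component in terms of $V$ through the Green's function with spatial rate $k_\lambda=\eps\sqrt{\lambda+\eps^\alpha\mu}$, substituting into the $V$-equation to obtain a nonlocal eigenvalue problem whose finite-rank part is $I+\cN_\lambda$; the admissibility bound \eqref{Nlam-est} is precisely what inverts this NLEP uniformly along $\cC$, and near the branch point it furnishes an $O(\eps^{\alpha/2})$ gain that controls the slowly-decaying $U$-``shelf''. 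The polynomial weight in $\|\cdot\|_X$ is essential here --- it renders that shelf small in the working norm and makes the integral near $\lambda=-\eps^\alpha\nu$ converge --- while, being polynomial rather than exponential, it neither moves the essential spectrum nor obstructs generation of the semigroup. Splitting the contour integral into the near-branch piece $\cC_\nu$ and the rays $\cC_\pm$ then delivers decay at rate $\eps^\alpha\nu$.

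\emph{Step 3 (renormalization iteration and closure).} Partition $[0,T_b]$ into steps of length $\tau$, freeze the linearization at $L_k:=L_{\vp(t_k)}$ on $[t_k,t_{k+1}]$, and re-center the polynomial weight at the current pulse positions. Duhamel against $e^{L_k t}$ and Step 2 give, with $S_k:=\sup_{[t_k,t_{k+1}]}\|W\|_X$,
$$ \|W(t_{k+1})\|_X \le M e^{-\eps^\alpha\nu\tau}\|W(t_k)\|_X + \frac{M}{\eps^\alpha\nu}\Bigl( \|(I-\pi_\vp)\cF(\Phi)\|_X + \eps\tau\, S_k + S_k^2 \Bigr), $$
using $\|(L_{\vp(s)}-L_k)W\|_X\lesssim\eps\tau\|W\|_X$ (since $\dot\vp=O(\eps)$ makes $L_\vp$ Lipschitz-slowly varying in the working norm) and $\|\cN(W)\|_X\lesssim\|W\|_X^2$, with the off-manifold residual of size $O(\eps)$ contributing $O(\eps^{1-\alpha})$ after the $\eps^{-\alpha}$ loss. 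Summing the geometric recursion over $k$ yields $\|W(t)\|_X\le M(e^{-\eps^\alpha\nu t}\|W_0\|_X+\eps^{1-\alpha})$ under a bootstrap hypothesis $\|W\|_X\lesssim\eps^{1-\alpha}$ that must be recovered with a strictly better constant. This forces $\eps^{-\alpha}\lesssim\tau\ll\eps^{-(1-\alpha)}$: the step must be long enough for the frozen semigroup to contract, and short enough for the frozen-operator error $\eps\tau S_k/\eps^\alpha$ and the quadratic term $S_k^2/\eps^\alpha$ to stay subordinate to $\eps^{1-\alpha}$. This window is non-empty exactly when $\alpha<\frac12$, which is the role of the normal hyperbolicity condition; equivalently $\eps^{1-\alpha}\ll\eps^\alpha$ keeps the error floor well inside the admissible basin $\|W_0\|_X\le M_0\eps^\alpha|\ln\eps|^{-1}$. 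Since $\dot\vp=O(\eps)$, $\vp(t)$ needs time $\gtrsim\eps^{-1}\dist(\vp(0),\partial\cK)$ to leave $\cK$, giving $T_b\ge M_0 d_0\eps^{-1}$.

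\emph{Step 4 (the pulse law).} Reading off the leading $\pi_\vp$-residual, the dominant contribution is the $U$-shelf coupling between pulses $j$ and $k$: away from the pulses the slowly-diffusing component solves $\eps^{-2}U_{xx}-\eps^\alpha\mu U+(\text{pulse sources})=0$, so it decays at the slow rate $e^{-\eps^{1+\alpha/2}\sqrt{\mu}|p_k-p_j|}$, and pairing the gradient jumps against the translational eigenfunction $\phi_0'$ fixes the coefficients through the masses $\overline{\phi_0^{\alpha_{12}}}$, $\overline{\phi_0^{\alpha_{22}+1}}$ and $\|\phi_0'\|_{L^2}^2$, together with the amplitude scaling $\vq\,^{\theta}$ coming from the steady-state balance between $U^{\alpha_{11}}$ and the rescaled $V$-pulse; this produces \eqref{Main-pulse}--\eqref{matrixA}, the anti-symmetry of $\cA$ being inherited from the odd structure of the shelf derivative across each pulse. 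The main obstacle throughout is Step 2: controlling $(\lambda-L_\vp)^{-1}$ uniformly in the weighted norm at distances $O(\eps^\alpha)$ from the essential spectrum, where the $U$-component is only weakly localized, and verifying that this control is stable under the re-centering of the weight at each renormalization step.
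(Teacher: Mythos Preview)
Your plan is essentially the paper's: NLEP resolvent along the contour $\cC$, frozen-in-time semigroup decay, renormalization iteration, and the window $\eps^{-\alpha}\lesssim\tau\ll\eps^{-(1-\alpha)}$ yielding $\alpha<\tfrac12$ is exactly how the constraint enters (their display after (6.23)). A few implementation differences are worth knowing. First, the paper freezes \emph{both} the linearization and the projection on each interval, keeping $W\in X_{\vp_0}$ rather than $X_{\vp(t)}$; your Step~1 uses the moving projection while Step~3 freezes $L_k$, which forces you to track $\dot\pi_{\vp(t)}W$ terms and the mismatch between $e^{L_k t}$ and $X_{\vp(t)}$---the paper avoids this entirely and only re-projects at the step endpoints via an implicit-function lemma (their Proposition~6.1). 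Second, they do not leave $(I-\pi_\vp)\cF(\Phi)$ as a Duhamel forcing; instead they introduce a correction $\Phi_c:=-\tL_{\vp_0}^{-1}\tilde\pi_{\vp_0}\cF(\Phi)$ so that the residual enters only through $\Delta L\,\Phi_c$ and $|\dot\vp|$---the dominant forcing in their key estimate is the temporal residual $R=-\tilde\pi_{\vp_0}\nabla_\vp(\Phi+\Phi_c)\cdot\dot\vp$, not the static residual. Third, the resolvent is not built for $L_\vp$ directly but for a \emph{reduced} operator $\tL_\vp$ in which the spatially-varying first-row potentials are replaced by rank-$N$ windowing tensors $J_{11},J_{12}$; this is what makes the NLEP matrix $\cN_\lambda$ appear exactly, and the error $\Delta_rL:=L_{\vp_0}-\tL_{\vp_0}$ is controlled by a zero-mass-per-window mechanism (their Lemma~7.2 and small-mass estimates (3.5)--(3.6)). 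Finally, the $X\to X$ semigroup bound carries an unavoidable $|\ln\eps|$ from the vertical contour segment (their (5.14) with $p_1+p_2=2$), which is why the admissible initial perturbation has the $|\ln\eps|^{-1}$ and why their one-step contraction factor is $|\ln\eps|\,\eps^{1-3\alpha/2}$ rather than a pure power.
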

\begin{remark}
Since the matrix $\cA$ has negative entries above the diagonal, and positive below, the pulses generically repel each other, with the 
right- and left-most pulses move right and left respectively. Moreover, if the pulse configuration enters the weak regime, then
then the inter-pulse separations are monotonically increasing and the time $T_b$  is infinite.
\end{remark}
\begin{remark}
The error from the $\|W\|_X^2$ term may initially dominate the pulse motion if $\|W_0\|_X$ is initially $O(\eps^\alpha).$ However, 
after a $t\approx O(\eps^{-\alpha})$ transient, during which the pulses move a negligible $O(\eps^\alpha)$ distance, this term becomes higher order.
After this transient the error bound on the pulse evolution \eqref{Main-pulse} is sharp when the pulses are uniformly in the semi-strong regime, 
that is each pulse separation satisfies $a_0 \eps^{1+\alpha/2}\leq \Delta p_i \leq a_1 \eps^{1+\alpha/2}.$ However, if one or more pulse separations
become large, and some pulses enter into a weak regime, then the leading order pulse evolution is still given by the first term on the
right-hand side of \eqref{Main-pulse}. A proof of this requires the decomposition \eqref{decompU,V} to be built upon a linearization
about the more accurate pulse ansatz, $\Phi+\Phi_c$. Thus we must analyze the linearization about $\Phi$ to construct $\Phi_c$,
see \eqref{vecPhi1}, and then linearize about 
$\Phi+\Phi_c$ to perform the RG iteration. For brevity of presentation, we have forgone these technicalities. 
\end{remark}

The paper is organized as follows. In section 2 we present the existence of the semi-strong $N$-pulses, whose graphs form the semi-strong $N$-pulse
manifolds.  In particular, we uniformly bound the components of $\vq$ both from above, and away from zero from below. 
In section 3 we develop estimates on the resolvent of a key linear operator and on the residual, obtained by evaluating the right-hand side of (\ref{UV}) at a semi-strong $N$-pulse $\Phi$.  In section 4 we characterize the spectrum of the linearization $L_\vp^{(j)}$ about an admissible $N$-pulse, and in section 5 we develop resolvent and semi-group estimates on the full operator.  In section 6 we use the renormalization group approach to develop nonlinear estimates on the full system and obtain the adiabatic stability results. Section 7 presents technical estimates used in section 6.
We conclude with a discussion which motivates a more general relation between the normal hyperbolicity and adiabatic stability.
 
\subsection{Notation}
We fix the number $N\in\mathbb{N}_+$ of localized pulses and the minimal pulse separation parameter, $\ell>0$,
defined in \eqref{cKell-def}. 
For each fixed pulse position $\vp\in\cK$ 
we define the following norm,
\be
\|f\|_{W^{1,1}_{\xi}} = \|\xi f\|_{L^1}+\|\partial_x f\|_{L^1},
\ee
where $\xi$ is a smooth, positive, mass one function with support within $(-\ell/4, \ell/4)$. We also define its $\vp$ translates
\be
\label{xi-def} \xi_j :=\xi(x-p_j).
\ee
The  ${W^{1,1}_{\xi}}$ norm controls $L^{\infty}$, since for any $x, y \in \bR$,
\be
|u(x)| \leq |u(y)|+\int_\bR|u'(z)|dz,
\ee
and multiplying this inequality by the mass-one function $\xi(y)$, and integrating over $y$ yields
\be
|u(x)| \leq \|u'\|_{L^1}+\int_\bR|\xi(y)u(y)|dy = \|u\|_{W^{1,1}_{\xi}}.
\ee
For each $\beta>0$ and pulse configuration $\vp\in\cK$ 
we introduce the locally weighted space $L^1_{\beta,\vp}$, defined through the partition of unity 
\beq\label{chi-def}
\left\{\chi_j(\vp\,)\right\}_{j=1}^N,
\eeq
which is subordinate to the cover $\{ (s_{j-1}+1,s_j-1)\bigl| j=1, \ldots N \}$ with 
$s_j= \frac{p_{j+1}-p_j}{2}$, for $j=\{1,\dots,N-1\}$ while $s_0=-\infty$ and $s_N=\infty$, see Figure\,\ref{fig:pou}.
The space is defined through the corresponding norm
\be
\label{windownorm} \|f\|_{L^1_{\beta,\vp}} = \sum_{j=1}^N\left\| \left(1+|x-p_j|^\beta\right)\chi_j f\right\|_{L^1},
\ee
which controls long-wavelength terms, \textit{uniformly}  in each $\chi_j$ window.  In particular, we have the estimate
\beq
 \left\| \widehat{\chi_jf}\right\|_{W^{1,\infty}} \leq C \|f\|_{L^1_{1,\vp}},
 \eeq
 where the hat denotes the Fourier transform and $W^{1,\infty}$ is the classical Sobolev norm.
\begin{figure}[ht]
\begin{center}
\includegraphics[scale=.6]{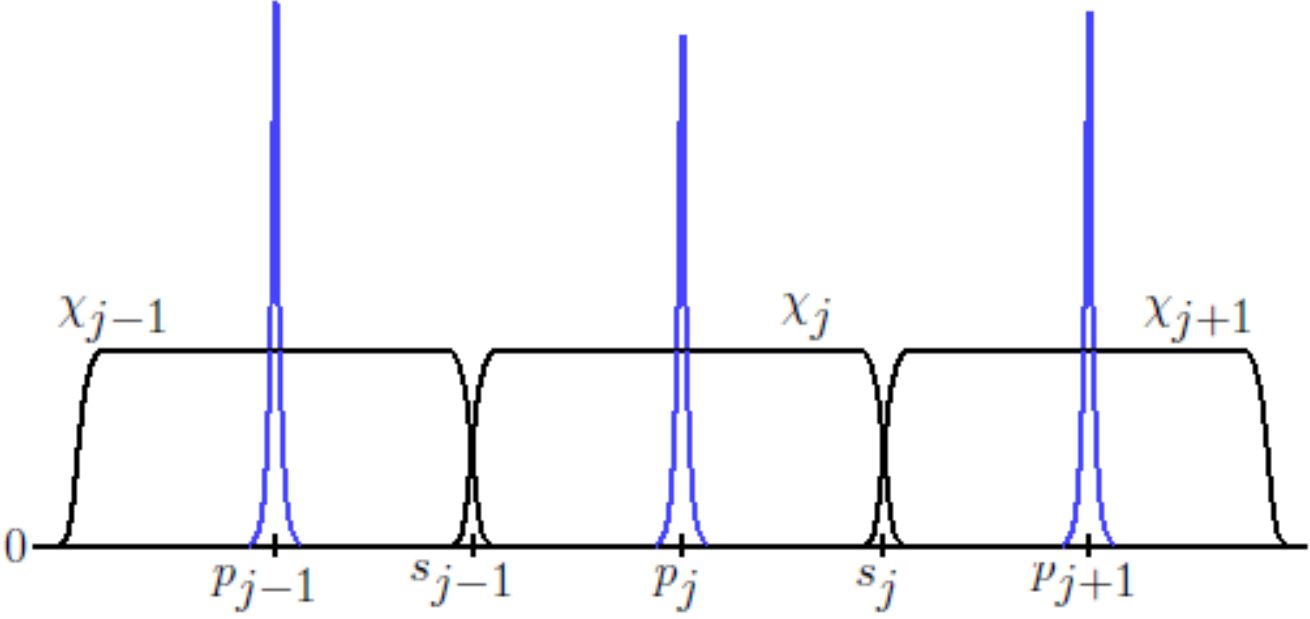}
\caption{The localized component of the pulse solution associated partition of unity $\{\chi_j\}$.} 
\label{fig:pou}
\end{center}
\end{figure}
We define the windowing $\{ f_j=\chi_j f\}_{j=1}^{N}$ of a function $f$ with respect to the pulse configuration $\vp$. This affords the
decomposition
\be
\label{windowing} f=\sum_{j=1}^Nf_j=\sum_{j=1}^Nf\chi_j.
\ee
The main results are stated over the Banach space $X$ defined by the norm
\be
\label{Xnorm} \|F\|_X  = \|f_1\|_{W_{\xi}^{1,1}}+\|f_2\|_{H^1_{\oalpha,\vp}},
\ee
for $F=(f_1,f_2)^T$ where $\oalpha=\max\{\alpha_{11},\alpha_{21}\},$
and
\beq
 \|f\|_{H^1_{\oalpha,\vp}}^2:= \|f^\prime\|_{L^2}^2 + \|f\|_{L^1_{\oalpha,\vp}}^2. 
 \eeq
 The $H^1_{\oalpha,1}$ norm controls the usual $H^1$ norm and affords the nonlinear estimate \eqref{mega-bound} required
 to control the nonlinearity $\cNon$, as in \eqref{nonlinestimate}, for the case $\alpha_{12}=2.$

For  $G \in L^2(\bR)^{k\times l}$ with components $[G]_{ij}=g_{ij}\in L^2(\bR)$,  we denote the tensor operator $\otimes\,G:L^2(\bR)\mapsto \bR^{k\times l}$, 
which acts on $h \in L^2(\bR)$ by component-wise inner product,
\be
 \left[\otimes\,G \cdot h\right]_{ij} =  \left( g_{ij}, h \right)_{L^2}.
\ee
If $F\in L^2(\bR)^{j\times k}$ then the tensor product $F\otimes G$ is  a finite rank map that takes $h\in L^2(\bR)$ to $F\otimes G\cdot h \in L^2(\bR)^{j\times m}$ through the usual matrix 
multiplication of $F$ with  $\otimes\,G\cdot h.$ In particular, for each $\vp\in\cK$ we define the associated windowing tensor
\beq
\otimes\, \vchi^{\,T} = \otimes\,\left( \chi_1, \cdots,  \chi_N  \right),
\eeq
where $\{\chi_j\}_{j=1}^N$ is the partition of unity associated to $\vp$ and the superscript $T$ denotes transposition. 
The windowing tensor reproduces the masses
of the windowings of $f$, in vector form,
\[ \otimes\, \vchi ^{\,T}\cdot f = \left(\, \overline{ f}_1, \cdots, \overline{f}_N\,\right),\]
where we denote the mass of $h\in L^1(\bR)$ by
\beq
\overline{h} := \int_\bR h(x)\, dx.\label{mass-notation}
\eeq
Combining the windowing tensor with the vector function
\beq
\vxi\,^T = \left( \xi_1,  \cdots,  \xi_N  \right),
\eeq
where the $\{\xi_j\}_{j=1}^N$ are defined in (\ref{xi-def}), yields the windowing tensor product $\vxi\otimes\vchi$ which is a rank $N$ map from $L^2(\bR)$ to $L^2(\bR)$ 
defined by
\beq
\vxi^{\,T}\otimes \vchi \cdot f = \sum\limits_{j=1}^N  (\chi_j,f)_2 \xi_j(x) = \sum\limits_{j=1}^N \overline{f}_j \,\xi_j(x).
\eeq
The windowing tensor product replaces $f$ with a sum of $N$ compactly supported functions, centered at the pulse positions $\vp$ with the same mass in each window
as $f$. Indeed, we have the equality 
\[ \otimes\vchi^{\,T}\cdot\left(f - \vxi\otimes \vchi^{\,T} \cdot f\right)=0.\]
For vectors $\vq\in\bR^N$  and $\beta\in\bR$ we introduce the component-wise exponential,
\beq\label{matlab-exp}
\vq\,^\beta := (|q_1|^\beta, \cdots, |q_N|^\beta)^T.
\eeq

\section{Construction of the Semi-strong $N$-Pulse Manifolds}
The localized component of the pulse, $\phi_0$, see \eqref{phi0-def}, decays exponentially at an $O(1)$ rate.
Subsequently we take the pulse-pulse separation distance $\ell>0$ in $\cK_\ell$, recall \eqref{cKell-def}, so large that the localized tail-tail interaction
$ \phi^2((p_j-p_{j+1})/2)\sim \phi^2(\ell/2)\sim O(\eps^r)$ for some $r=r(\ell)\sim \ln\ell \geq 2.$

 To each vector of amplitudes  $\vq\in\bR_+^N$ we will associate a semi-strong $N$-pulse configuration 
 $\Phi=\left(\Phi_1(x;\vp,\vq\,),\Phi_2(x;\vp,\vq\,)\right)^T$, see Figure\,\ref{f:N-pulse}. Moreover we slave
the vector $\vq$ to the positions $\vp$ of the localized pulses via the {\sl mean-field equations}
\be
\label{mf-eq} q_{j} =  \Phi_1(p_j),
\ee
for $j=1,\dots,N$.  The construction begins with the localized fast-pulse $\phi_j$, the unique solution of the equilibrium equation
\be
\label{phij-def} \phi_j^{\prime\prime}-\phi_j+q_j^{\alpha_{21}}\phi_j^{\alpha_{22}}=0,
\ee
which is homoclinic to the origin and symmetric about $x=p_j.$
The existence of $\phi_j$ for $\alpha_{22}>1$ is immediate as the equation has a first integral.  A simple re-scaling shows that 
$\phi_j$ can be written in the from
\be
\label{phij-phi0}
  \phi_j(x;p_j,q_j) = q_j^{-\alpha_{21}/(\alpha_{22}-1)}\phi_0(x-p_j),
\ee
where $\phi_0$ is the unique homoclinic solution to
\be
 \label{phi0-def} 0 =\phi_0^{\prime\prime}-\phi_0+\phi_0^{\alpha_{22}}\!\!,
\ee
which is even about $x=0$. The second component of $\Phi$ is the
sum of the localized pulses,
\be
 \Phi_2= \sum\limits_{j=1}^N \phi_j(x;\vp,\vq\,).
 \label{Phi2-def}
\ee
The first component of $\Phi$ is the mean field generated by the localized pulses, obtained by 
solving the first equation of \eqref{UV} at equilibrium with $U$ replaced in the nonlinearity by the
local constant value $q_j$ when acting on $\phi_j$. The result is a linear equation for $\Phi_1$,
\be
\label{Phi1-def}  
\epsilon^{-2}\partial_{xx} \Phi_1- \epsilon^{\alpha}\mu \Phi_1-\epsilon^{-1}\sum_{j=1}^N q_j^{\alpha_{11}}\phi_j^{\alpha_{12}}+\eps^{\alpha/2}\rho=0.
\ee
Introducing the operator associated to the essential spectrum,
\be
\label{L11e-def} L_{11}^e=-\epsilon^{-2}\partial_{x}^2+ \epsilon^{\alpha}\mu,
\ee
with inverse denoted $L_{11}^{-e}$, we rewrite \eqref{Phi1-def} as
\be \label{Phi1-defb}  
L_{11}^e\left(\Phi_1-\eps^{-\alpha/2}\frac{\rho}{\mu}\right)=-\epsilon^{-1}\sum_{j=1}^N q_j^{\alpha_{11}}\phi_j^{\alpha_{12}},
\ee
and define the $N$-pulse configurations
\be
\label{Phi-def} 
\Phi(x,\vec{p},\vec{q}\,) := \left[ \begin{array} c \Phi_1(x,\vec{p},\vec{q}\,) \\
\Phi_2(x,\vec{p},\vec{q}\,) \end{array} \right]  =  
\left[ \begin{array}{ c } \eps^{-\alpha/2}\frac{\rho}{\mu} - \epsilon ^{-1}L_{11}^{-e}\left(\sum_{j=1}^N q_{j}^{\alpha_{11}}\phi_j^{\alpha_{12}}(x)\right) \\                                                                \sum_{j=1}^N \phi_j(x) \end{array} \right].
\ee

For $\lambda\in\bC$ we introduce the Green's function $G_{\lambda}(x)$ associated to $L_{11}^e+\lambda$ which enjoys the property
\be
\label{greenL11} \left(L_{11}^e+\lambda\right)^{-1}f =\left(G_{\lambda} * f\right)(x).
\ee
From the Fourier transformation, we determine its explicit formula
\be
\label{Ggreen} G_{\lambda}(x) := \dfrac{\epsilon^{2}}{k_{\lambda}}e^{-k_{\lambda}|x|},
\ee
where $k_\lambda$ was introduced in Definition\,\ref{def-one}.
A central role is played by the scaled two-point correlation matrix $\oG_N(\lambda)$ of the Green's function \eqref{Ggreen}
\be
\label{Gtwopointmat} \left[\oG_N(\vp,\lambda)\right]_{ij}:= e^{-k_{\lambda}|p_i-p_j|},
\ee 
where $\vp\in\cK$ for some minimal pulse separation $\ell>0.$ The unscaled version of the two-point correlation matrix is denoted
\beq \label{unscaled-G}
G_N(\vp,\lambda) =  \dfrac{\epsilon^{2}}{k_{\lambda}} \oG_N(\vp,\lambda).\eeq

The following theorem shows the existence of solutions to the mean-field equation. 

\begin{theorem} (\textbf{Existence of $N$-pulse configurations})\label{thm:mean_field}
Fix the pulse separation $\ell>0$ and  $N\in{\mathbb N_+}$. 
There exists a unique function $\vq=\vq(\vp\,)$ for which $\Phi\left(x,\vp,\vq\,(\vp\,)\right)$ solves the mean-field
equation \eqref{mf-eq}. Moreover, the solution takes the form $\vq=\vq_0(\vp\,)+\eps^{1+\alpha/2}\vq_1(\vp\,)$,
and is uniformly bounded, component-wise from above and away from zero.
In particular, $\vq_0$ solves
\beq\label{q0-eq}
 \frac{\overline{\phi_0^{\alpha_{12}}}}{\sqrt{\mu}}\oG_N(\vp,0) \tq_0^{\,\theta} = \frac{\rho}{\mu}\vec{1}-\eps^{\alpha/2}\vq_0.
 \eeq
 and $\vq$ admits the expansion,
\beq\label{q0-exp}
\vq\,(\vp\,)= \left(\frac{\rho}{\sqrt{\mu}\overline{\phi_0^{\alpha_{12}}}} \oG_N^{-1}(\vp,0) \vec{1}\right)^{\dfrac{1}{\theta}}+O\left(\eps^{\alpha/2}\right),
\eeq
where the exponential is component-wise with $\theta$  defined in (\ref{theta-def}).
\end{theorem}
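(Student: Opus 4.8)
The plan is to make the mean-field system \eqref{mf-eq} completely explicit using the Green's function \eqref{Ggreen}, recognize it as an $O(\eps^{1+\alpha/2})$ perturbation of \eqref{q0-eq}, and solve it by a contraction argument, reading off \eqref{q0-exp} at the end. First I would compute $\Phi_1(p_i)$: by \eqref{greenL11} at $\lambda=0$ one has $L_{11}^{-e}f=G_0*f$ with $G_0(x)=\frac{\eps^2}{k_0}e^{-k_0|x|}$, $k_0=\eps^{1+\alpha/2}\sqrt\mu$, and using the scaling identity $q_j^{\alpha_{11}}\phi_j^{\alpha_{12}}(y)=q_j^{\theta}\phi_0^{\alpha_{12}}(y-p_j)$ that follows from \eqref{phij-phi0} and the definition \eqref{theta-def} of $\theta$, \eqref{Phi-def} gives
\[ \Phi_1(p_i)=\eps^{-\alpha/2}\left[\frac{\rho}{\mu}-\frac{1}{\sqrt\mu}\sum_{j=1}^N q_j^{\theta}\int_{\bR}e^{-k_0|p_i-y|}\,\phi_0^{\alpha_{12}}(y-p_j)\,dy\right]. \]
Taylor-expanding $e^{-k_0|p_i-y|}=e^{-k_0|p_i-p_j|}e^{k_0(|p_i-p_j|-|p_i-y|)}$ in $k_0$, using $\bigl||p_i-p_j|-|p_i-y|\bigr|\le|y-p_j|$ and the exponential decay of $\phi_0$ (so all moments $\overline{|x|^m\phi_0^{\alpha_{12}}}$ are finite), the leading term reproduces $[\oG_N(\vp,0)]_{ij}\,\overline{\phi_0^{\alpha_{12}}}$, the first-order term vanishes by evenness of $\phi_0$ when $i\neq j$, and the net remainder is $O(\eps^{1+\alpha/2})$, uniformly for $\vq$ in a fixed box of $O(1)$ amplitudes and $\vp\in\cK$, and smooth in $\vq$. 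Multiplying $q_i=\Phi_1(p_i)$ by $\eps^{\alpha/2}$ and collecting terms yields the exact system
\[ \frac{\overline{\phi_0^{\alpha_{12}}}}{\sqrt\mu}\,\oG_N(\vp,0)\,\vq^{\,\theta}=\frac{\rho}{\mu}\vec 1-\eps^{\alpha/2}\vq+\eps^{1+\alpha/2}\vec R(\vq,\vp,\eps), \]
whose $\eps^{1+\alpha/2}\vec R$-free truncation is exactly \eqref{q0-eq}.

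Next I would solve the truncated system \eqref{q0-eq}. A uniform lower bound on any positive solution $\vq_0$ is immediate: since $\oG_N(\vp,0)$ has unit diagonal and positive off-diagonal entries and $\theta<0$ (Remark \ref{R:theta}), $[\oG_N(\vp,0)\vq_0^{\,\theta}]_i\ge (q_0)_i^{\theta}$, so \eqref{q0-eq} forces $\frac{\overline{\phi_0^{\alpha_{12}}}}{\sqrt\mu}(q_0)_i^{\theta}\le\frac{\rho}{\mu}$, i.e.\ $(q_0)_i\ge c_->0$ with $c_-$ depending only on $\rho,\mu,\phi_0$. For existence, uniqueness, and the upper bound I would change variables to $\vec r:=\vq_0^{\,\theta}$ (a diffeomorphism of the positive cone, $\theta<0$) and recast \eqref{q0-eq} as
\[ \vec r=\Psi(\vec r):=\frac{\sqrt\mu}{\overline{\phi_0^{\alpha_{12}}}}\,\oG_N(\vp,0)^{-1}\!\left(\frac{\rho}{\mu}\vec 1-\eps^{\alpha/2}\vec r^{\,1/\theta}\right). \]
Where $\oG_N(\vp,0)$ is uniformly invertible, the leading part $\frac{\rho}{\sqrt\mu\,\overline{\phi_0^{\alpha_{12}}}}\oG_N(\vp,0)^{-1}\vec 1$ is $O(1)$ and componentwise positive; for a suitable box and $\eps$ small, $\Psi$ maps the box into itself and is a contraction there, since $\vec r\mapsto\eps^{\alpha/2}\vec r^{\,1/\theta}$ has Lipschitz constant $O(\eps^{\alpha/2})$ on the box. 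The contraction mapping principle gives a unique $\vec r_0(\vp)$, hence a unique $\vq_0(\vp)=\vec r_0(\vp)^{1/\theta}$, bounded above and away from zero uniformly, and smooth in $\vp$ by the implicit function theorem (with $I-D\Psi$ invertible for $\eps$ small).

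I would then pass to the exact system by writing $\vq=\vq_0+\eps^{1+\alpha/2}\vq_1$, subtracting \eqref{q0-eq}, dividing by $\eps^{1+\alpha/2}$, and running a second contraction/implicit-function argument for $\vq_1$: the linear operator that appears is a small perturbation of $\frac{\overline{\phi_0^{\alpha_{12}}}}{\sqrt\mu}\,\theta\,\oG_N(\vp,0)\,\diag(\vq_0^{\,\theta-1})$, invertible with the same structural input, so $\vq_1(\vp)$ is uniquely determined, bounded, and smooth, which gives the stated form of $\vq$. Finally \eqref{q0-exp} follows by reading $\vq_0^{\,\theta}$ off \eqref{q0-eq},
\[ \vq_0^{\,\theta}=\frac{\rho}{\sqrt\mu\,\overline{\phi_0^{\alpha_{12}}}}\,\oG_N(\vp,0)^{-1}\vec 1-\frac{\sqrt\mu\,\eps^{\alpha/2}}{\overline{\phi_0^{\alpha_{12}}}}\,\oG_N(\vp,0)^{-1}\vq_0=\frac{\rho}{\sqrt\mu\,\overline{\phi_0^{\alpha_{12}}}}\,\oG_N(\vp,0)^{-1}\vec 1+O(\eps^{\alpha/2}), \]
raising to the power $1/\theta$ (legitimate, the base being bounded away from zero), and absorbing $\eps^{1+\alpha/2}\vq_1=O(\eps^{\alpha/2})$ since $\alpha\ge0$.

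I expect the main obstacle to be the near-degeneracy of the correlation matrix $\oG_N(\vp,0)$. On $\cK_\ell$ the only constraint is $\Delta p_i\ge\ell|\ln\eps|$, so $k_0\Delta p_i=\eps^{1+\alpha/2}\sqrt\mu\,\Delta p_i$ need not be bounded below; when several separations are small compared with $\eps^{-(1+\alpha/2)}$, the corresponding entries of $\oG_N(\vp,0)$ all tend to $1$, the matrix tends to the rank-one matrix $\vec 1\vec 1^{\,T}$, and $\oG_N(\vp,0)^{-1}$ blows up. Thus the clean contraction above and the explicit formula \eqref{q0-exp} are directly available only on the sub-regime where $\oG_N(\vp,0)$ is uniformly invertible---precisely the admissible family of Proposition \ref{p:admissible}, where $\Delta p_i\gtrsim\eps^{-(1+\alpha/2)}$. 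To get the uniform bounds on $\vq$ over all of $\cK_\ell$ one must instead decompose the pulses into maximal clusters of mutually close pulses, note that within each cluster the relevant block of $\oG_N(\vp,0)$ is $\approx\vec 1\vec 1^{\,T}$ so the components of $\vq_0$ coincide at leading order, and reduce \eqref{q0-eq} to a well-conditioned system for the cluster amplitudes coupled through the slowly varying inter-cluster entries; the term $-\eps^{\alpha/2}\vq_0$ in \eqref{q0-eq}, which is exactly what keeps the system uniquely determined when $\oG_N(\vp,0)$ is singular, must be retained throughout. Everything else---the convolution expansion, the contraction estimates, the smoothness in $\vp$, and the bookkeeping for \eqref{q0-exp}---is routine once the degeneracy is packaged correctly.
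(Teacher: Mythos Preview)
Your derivation of the algebraic system is essentially identical to the paper's: both convolve against $G_0$, Taylor-expand the Green's function about the pulse centers, observe that the odd term vanishes for $i\neq j$, and arrive at the same $O(\eps^{1+\alpha/2})$-perturbed system. The paper then rescales $\tilde p = \eps^{1+\alpha/2}\vp$ and invokes the implicit function theorem together with the explicit determinant formula $\det\oG_N(\vp,0)=\prod_{i=1}^{N-1}\bigl(1-e^{-2k_0\Delta p_i}\bigr)$, whereas you recast the truncated system as a fixed-point problem $\vec r = \Psi(\vec r)$ and run a contraction. These are interchangeable at this level of resolution; the IFT and contraction arguments are doing the same job with the same linearized operator.

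Your closing paragraph is the interesting part. You correctly note that on $\cK_\ell$ alone, where only $\Delta p_i\ge\ell|\ln\eps|$ is assumed, the product $k_0\Delta p_i$ can be $o(1)$ and $\oG_N(\vp,0)$ degenerates toward a rank-one matrix, so neither the contraction constant nor the IFT Jacobian is uniformly controlled there. The paper does not do the clustering reduction you sketch; it simply records invertibility via the determinant and asserts that ``positivity and uniform bounds on $\vq$ for $\vp$ in the semi-strong regime follow.'' In practice, every downstream use of the bounds in the paper (admissibility, resolvent estimates, the main theorem) takes place on the admissible families of Proposition~\ref{p:admissible}, where $\Delta p_i\gtrsim\eps^{-(1+\alpha/2)}$ and $\oG_N(\vp,0)$ is uniformly well-conditioned, so the issue you flag does not bite the rest of the argument. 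Your cluster decomposition would be the right tool if one actually wanted the uniform statement over all of $\cK_\ell$. One small caveat: your a priori lower bound on $(q_0)_i$ uses $\theta<0$; the existence theorem as stated only assumes $\theta\neq 0$, so for $\theta>0$ you would need to reverse the inequality and get an upper bound on $(q_0)_i$ by the same monotonicity, then extract the lower bound from the equation itself.
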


\begin{proof}
An application of \eqref{greenL11} with $\lambda=0$ to the first component of \eqref{Phi-def} evaluated at $x=p_k$
yields the equation
\be
\label{q_kequ} q_k=\eps^{-\alpha/2}\frac{\rho}{\mu}- \epsilon^{-1}\left(G_0 * \sum_{j=1}^N q_{j}^{\alpha_{11}}{\phi_j}^{\alpha_{12}}\right)(p_k),
\ee
where, for $\lambda=0$, the Green's function takes the form
\be
\label{Ggreensfunc} G_{0}(x)= \frac{\epsilon^{1-\alpha/2}}{\sqrt{\mu}}e^{-\epsilon^{1+\alpha/2}\sqrt{\mu}|x|}.
\ee
The system of equations in \eqref{q_kequ} may be written in the vector form
\be
\label{vecq} \vec{q}= \eps^{-\alpha/2}\dfrac{\rho}{\mu}\vec{1}-\eps^{-\alpha/2}\mcG(\vp,\eps) \vec q \hspace{.025in} ^{\alpha_{11}},
\ee
where $\mcG \in \mathbb{R}^{N \times N}$ has entries
\be
\mcG_{jk}= \epsilon^{\alpha/2-1}\left(G_0 * \phi_j^{\alpha_{12}}\right)(p_k).
\ee
Substituting for $G_0$ from (\ref{Ggreensfunc}), for $\phi_j$ from (\ref{phij-phi0}), and recalling the definition (\ref{theta-def}) of $\theta$, we obtain
\be
\label {q_k11} 
\mcG_{jk}= \dfrac{1}{\sqrt{\mu}}q_{j}^{\theta-\alpha_{11}} \int \limits_\bR e^{-\epsilon^{1+\alpha/2}\sqrt{\mu}|y-p_k|} \phi_0^{\alpha_{12}}(y-p_j)dy,
\ee
from which we see that $\mcG$ has an $O(1)$ limit as $\eps$ tends to zero. For $j\neq k$ the exponential decay of $\phi_0$ permits us to Taylor expand the exponential about $y=p_j$,
\be
e^{-\epsilon^{1+\frac{\alpha}{2}}\sqrt{\mu}|p_k-y|} = e^{-\epsilon^{1+\frac{\alpha}{2}}\sqrt{\mu}|p_k-p_j|}-\sigma_{jk} \epsilon^{1+\frac{\alpha}{2}}\sqrt{\mu}e^{-\epsilon^{1+\frac{\alpha}{2}}\sqrt{\mu}|p_k-p_j|}(y-p_j)+O(\epsilon^{2+\alpha}|y-p_j|^2),
\ee
where $\sigma_{jk}={\rm sign}(p_k-p_j).$ Substituting this into  \eqref{q_k11} and evaluating the leading-order integral, we have the expansion
\be
\label{G2} \mcG_{jk}= \dfrac{1}{\sqrt{\mu}}q_{j}^{\theta-\alpha_{11}} \overline{\phi_0^{\alpha_{12}}}e^{-\epsilon^{1+\alpha/2}\sqrt{\mu}|p_k-p_j|}
+O(\eps^{2+\alpha}),
\ee
where the $O(\eps^{1+\alpha/2})$ terms are zero due to parity, and we are applying the total mass notation $\overline{f}:=\int_\bR f(s)ds.$ For the case $j=k$ we consider the Taylor expansion 
of the exponential for $y<p_j$ and for $y>p_j$. The $O(y-p_j)$ terms do not integrate to zero and we obtain
\be
\label{G3} \mcG_{jj}= \dfrac{1}{\sqrt{\mu}}q_{j}^{\theta-\alpha_{11}} \overline{\phi_0^{\alpha_{12}}}
+O(\eps^{1+\alpha/2}).
\ee
Therefore, using both \eqref{G2} and \eqref{G3}, we see that the equation (\ref{vecq}) can be written as
\be
\label{tq-eq} \eps^{\alpha/2}\tq=  -\left( \frac{\overline{\phi_0^{\alpha_{12}}}}{\sqrt{\mu}} \oG_N\,(\vp,0) +\eps^{1+\alpha/2}\mcG_1(\vp,\eps)\right) \tq \, ^{\theta}+\dfrac{\rho}{\mu}\vec{1},
\ee
where  $\mcG_1\in\bR^{N\times N}$ is uniformly bounded  for $|\eps|<\eps_0$ and smooth in $\vp$.  
Rescaling $\tp=\eps^{1+\alpha/2} \vp$, removes the $\eps$ dependence of the exponentials. The existence of the solution to \eqref{q0-eq} of the form \eqref{q0-exp} follows from the implicit function theorem and the invertibility of $\oG_N(\vp,\lambda)$,
indeed 
\beq\label{GN-det}
  \det  \oG_N(\vp,\lambda) = \prod\limits_{i=1}^{N-1} \left(1- e^{-2k_\lambda(p_{i+1}-p_i)}\right), 
  \eeq
  which is non-zero. Moreover, the inverse of $\oG_N$ is tri-diagonal and can be constructed explicitly, see \cite{ironward} for details.
  The positivity and uniform bounds on $\vq$ for $\vp$ in the semi-strong regime follow.  The existence of the solution $\vq$ to \eqref{tq-eq}
then follows from a perturbation off of $\vq_0.$\hfill \end{proof}

\section{Bounds and Residual Estimates on Semi-Strong $N$-pulses}
Fix a set $\cK$ of semi-strong $N$ pulse solutions as constructed in Theorem\,\ref{thm:mean_field}, with
minimal pulse separation distance $\ell>0$. For all $\vp\in\cK$, in subsection 3.1 we establish estimates on $(L_{11}^e+\lambda)^{-1}f$ in various norms, in subsection 3.2 we obtain bounds on the norms of the semi-strong $N$-pulses $\Phi(\vp)$, in subsection 3.3 we establish a result allowing for subsequent reduction of a finite rank operator, and in subsection 3.4 we establish
estimates on the residual $\cF(\Phi(\vp\,)).$  

\subsection{Linear estimates}
We recall  $L_{11}^e$ and $k_{\lambda}$ introduced in  (\ref{L11e-def}) and Definition\,\ref{def-one} respectively.
For each $\vp\in\cK$, we have the partition of unity $\{\chi_j\}_{j=1}^N$ given in (\ref{chi-def}) and the
weighted-windowed norm $L^1_{1,\vp}$ defined in \eqref{windownorm}.
\begin{lemma}
\label{l:L11}
There exists $C>0$ such that for any $f \in L^1(\mathbb{R})$ or $f \in {W^{1,1}_{\xi}}(\mathbb{R})$ and for any $\lambda \in \mathbb{C}$  \verb \   $(-\infty,-\epsilon^{\alpha}\mu)$, the following estimates hold:
\begin{eqnarray}
\label{1lemma1} \hskip .35in \|(L_{11}^e+\lambda)^{-1}f\|_{W^{1,1}_{\xi}} &\leq& \dfrac{C\epsilon^2}{\Re(k_{\lambda})}\min\left\{\|f\|_{L^1}, \dfrac{\|f\|_{W^{1,1}_{\xi}}}{|k_{\lambda}|}\right\}, \\
\label{3lemma1} \hskip .35in \|(L_{11}^e+\lambda)^{-1}f\|_{L^{\infty}} +|\Re(k_{\lambda})|\, \|(L_{11}^e+\lambda)^{-1}f\|_{L^1}&\leq& C\dfrac{\epsilon^2}{|k_{\lambda}|}\|f\|_{L^1}, \\
\label{4lemma1} \hskip .35in \|\partial_x((L_{11}^e+\lambda)^{-1}f)\|_{L^{\infty}} +
| \Re(k_{\lambda})|\, \|\partial_x((L_{11}^e+\lambda)^{-1}f)\|_{L^{1}} &\leq& C\epsilon^2\|f\|_{L^1},
\end{eqnarray}
where $\Re$ denotes the real part.
Moreover, for all $\vp\in\cK$,  $f \in L^1_{1,\vec{p}}(\mathbb{R}),$ and
$\lambda \in \mathbb{C}$  \verb \   $(-\infty,-\epsilon^{\alpha}\mu)$  we have the small-mass estimates
\begin{eqnarray}
\label{5lemma1} \|(L_{11}^e+\lambda)^{-1}f\|_{W^{1,1}_{\xi}} &\leq& C\dfrac{\epsilon^2}{\Re(k_{\lambda})}\left(|\otimes \vec{\chi} \cdot f| +\big{|}k_{\lambda}\big{|}\|f\|_{L^1_{1,\vec{p}}} \right), \\
\label{6lemma1} \|(L_{11}^e+\lambda)^{-1}f\|_{L^{\infty}} &\leq& C\epsilon^2\left( \dfrac{1}{|k_{\lambda}|}|\otimes \vec{\chi} \cdot f| +\|f\|_{L^1_{1,\vec{p}}} \right).
\end{eqnarray}
\end{lemma}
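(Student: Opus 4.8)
The plan is to exploit the explicit convolution formula \eqref{Ggreen} for the Green's function $G_\lambda(x) = \frac{\eps^2}{k_\lambda}e^{-k_\lambda|x|}$ and simply estimate the convolution $(L_{11}^e+\lambda)^{-1}f = G_\lambda * f$ term by term in each norm. For \eqref{3lemma1} one writes $\|G_\lambda * f\|_{L^\infty} \le \|G_\lambda\|_{L^\infty}\|f\|_{L^1} = \frac{\eps^2}{|k_\lambda|}\|f\|_{L^1}$, and $\|G_\lambda * f\|_{L^1} \le \|G_\lambda\|_{L^1}\|f\|_{L^1}$; since $\|e^{-k_\lambda|\cdot|}\|_{L^1} = \frac{2}{\Re(k_\lambda)}$ (valid exactly on the stated domain $\lambda\notin(-\infty,-\eps^\alpha\mu)$, where $\Re(k_\lambda)>0$), this gives $\|G_\lambda\|_{L^1} = \frac{2\eps^2}{k_\lambda\Re(k_\lambda)}$, and multiplying by $|\Re(k_\lambda)|$ and using $|\Re(k_\lambda)|\le|k_\lambda|$ yields the claimed bound. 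For \eqref{4lemma1}, differentiate: $\partial_x G_\lambda = -\frac{\eps^2}{k_\lambda}k_\lambda\,\mathrm{sign}(x)e^{-k_\lambda|x|} = -\eps^2\,\mathrm{sign}(x)e^{-k_\lambda|x|}$ (the $1/k_\lambda$ cancels), so $\partial_x(G_\lambda*f) = (\partial_x G_\lambda)*f$ and we read off $\|\partial_x(G_\lambda*f)\|_{L^\infty}\le\eps^2\|f\|_{L^1}$ and $\|\partial_x(G_\lambda*f)\|_{L^1}\le\eps^2\|e^{-k_\lambda|\cdot|}\|_{L^1}\|f\|_{L^1} = \frac{2\eps^2}{\Re(k_\lambda)}\|f\|_{L^1}$.

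For \eqref{1lemma1}, the $W^{1,1}_\xi$ norm is $\|\xi u\|_{L^1}+\|\partial_x u\|_{L^1}$ with $u = G_\lambda*f$. The $\partial_x$ part is controlled exactly as above by $\frac{\eps^2}{\Re(k_\lambda)}\|f\|_{L^1}$. For $\|\xi u\|_{L^1}$, since $\xi$ has mass one and $\|u\|_{L^\infty}\le\frac{\eps^2}{|k_\lambda|}\|f\|_{L^1}$, we get $\|\xi u\|_{L^1}\le\|u\|_{L^\infty}\le\frac{\eps^2}{|k_\lambda|}\|f\|_{L^1}\le\frac{\eps^2}{\Re(k_\lambda)}\|f\|_{L^1}$ (using $\Re(k_\lambda)\le|k_\lambda|$), which gives the first alternative in the $\min$. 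For the second alternative, one integrates by parts to move a derivative off of $u$: writing $u(x) = \int G_\lambda(x-y)f(y)\,dy$ and using $\|f\|_{W^{1,1}_\xi}$ means $f$ itself satisfies $|f(y)|\le\|f\|_{W^{1,1}_\xi}$ pointwise and has $\partial_y f\in L^1$; then one can absorb an extra power of $|k_\lambda|$ in the denominator via $\|G_\lambda\|_{L^1}\sim\frac{\eps^2}{k_\lambda\Re(k_\lambda)}$ rather than $\|G_\lambda\|_{L^\infty}\sim\frac{\eps^2}{|k_\lambda|}$, trading the $L^\infty$ bound on $f$ for an $L^1$-type bound and producing the factor $\frac{1}{|k_\lambda|}$. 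I would carry this out by splitting $f = \xi$-localized part plus the rest and using the fundamental-theorem-of-calculus bound $|f(y)|\le\|\partial_x f\|_{L^1}+\int\xi|f|$ already displayed in the Notation section.

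The small-mass estimates \eqref{5lemma1}--\eqref{6lemma1} are the substantive part, and I expect them to be the main obstacle. Here one uses the windowing $f = \sum_j \chi_j f$ and, within each window, writes $\chi_j f = \overline{\chi_j f}\,\xi_j + (\chi_j f - \overline{\chi_j f}\,\xi_j)$, i.e. subtract off a mass-matched bump so the remainder has zero mass. The contribution of the mass terms $\overline{\chi_j f}\,\xi_j$ to $G_\lambda * f$ is bounded by $\frac{\eps^2}{|k_\lambda|}|\otimes\vchi\cdot f|$ directly from \eqref{3lemma1}. For the zero-mass remainders $g_j := \chi_j f - \overline{\chi_j f}\,\xi_j$, one writes $g_j = \partial_x h_j$ where $h_j(x) = \int_{-\infty}^x g_j$ is compactly supported (this uses $\int g_j = 0$) with $\|h_j\|_{L^1}\lesssim\|f\|_{L^1_{1,\vp}}$ by the weighted bound on the support size of $\chi_j$; then $G_\lambda * g_j = (\partial_x G_\lambda)*h_j$ and $\|(\partial_x G_\lambda)*h_j\|_{L^\infty}\le\eps^2\|h_j\|_{L^1}\lesssim\eps^2\|f\|_{L^1_{1,\vp}}$, and likewise $\|(\partial_x G_\lambda)*h_j\|_{L^1}\le\frac{2\eps^2}{\Re(k_\lambda)}\|h_j\|_{L^1}$, giving the $\|f\|_{L^1_{1,\vp}}$ terms. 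The delicate points are: (i) bookkeeping the window supports so that $\|h_j\|_{L^1}$ is genuinely controlled by the \emph{weighted} $L^1_{1,\vp}$ norm (this is why the weight $1+|x-p_j|$ appears — it dominates the length of the interval over which one antidifferentiates), and (ii) checking that summing over the $N$ windows does not lose the uniformity, which works because $N$ is fixed and the windows have bounded overlap. I would finish by collecting the mass-term and zero-mass-term bounds and noting $\Re(k_\lambda)\le|k_\lambda|$ to match the stated right-hand sides.
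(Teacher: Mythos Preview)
Your proposal is correct and follows essentially the same route as the paper: direct Young-type convolution estimates on $G_\lambda$ and $\partial_x G_\lambda$ for \eqref{1lemma1}--\eqref{4lemma1}, and for the small-mass estimates the same decomposition $\chi_j f = \overline{\chi_j f}\,\xi_j + \partial_x h_j$ with $\|h_j\|_{L^1}$ controlled by the weighted norm. One small point to tighten: your claim that $h_j$ is compactly supported is not literally true for the two boundary windows ($j=1,N$), where $\chi_j$ has unbounded support; the paper handles this uniformly via the integration-by-parts identity $\|h_j\|_{L^1}=\int \partial_x(x-p_j)\,|h_j|\le\int |x-p_j||g_j|\,dx$, which is exactly the mechanism your parenthetical about the weight ``dominating the length of the interval'' is pointing at, and which you should state explicitly rather than invoking support size.
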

The small-mass estimates are useful in section 6 when we examine the difference of two linear operators, where their difference is large, but the difference has small mass in each window.
\begin{proof}
We introduce $ g(x):= (L_{11}^e+\lambda)^{-1}f = (G_{\lambda} * f)(x)$ where the Green's function $G_{\lambda}$ is given in \eqref{Ggreen}. 
From the identity $g' =G_{\lambda}'*f$ and $L^p$ convolution estimates \cite{Horm}, we have the bounds
\begin{align}
\|g'\|_{L^1}& \leq \|G_{\lambda}'\|_{L^1}\|f\|_{L^1} \leq C\dfrac{\epsilon^2}{\Re(k_{\lambda})}\|f\|_{L^1}, \\
\|\xi g\|_{L^1} &\leq \|\xi\|_{L^1}\|(G_{\lambda} * f)(x)\|_{L^{\infty}} \leq \|G_{\lambda}\| _{L^{\infty}}\|f\|_{L^1} \leq C\dfrac{\epsilon^2}{|k_{\lambda}|}\|f\|_{L^1},
\end{align}
for $\xi$ defined in \eqref{xi-def}. These two estimates establish the $L^1$ bound in \eqref{1lemma1}.  For the $W^{1,1}_\xi$ bound we first observe that 
\be
\|g'\|_{L^1}=\|(G_{\lambda}* f')(x)\|_{L^1} \leq \|G_{\lambda}\|_{L^{1}}\|f'\|_{L^1} \leq C\dfrac{\epsilon^2}{|k_{\lambda}|\Re(k_{\lambda})}\|f\|_{W^{1,1}_{\xi}}.
\ee
Combining this with the estimate
\be
\|\xi g\|_{L^1} \leq \|\xi\|_{L^1}\|(G_{\lambda} * f)(x)\|_{L^{\infty}} \leq \|G_{\lambda}\| _{L^1}\|f\|_{L^{\infty}} \leq C\dfrac{\epsilon^2}{|k_{\lambda}|\Re(k_{\lambda})}\|f\|_{W^{1,1}_{\xi}}  
\ee
yields \eqref{1lemma1}. 

To establish \eqref{3lemma1} we observe that 
\begin{align}
\|g\|_{L^{\infty}} = &\|(G_{\lambda} * f)(x)\|_{L^{\infty}} \leq \|G_{\lambda}\|_{L^{\infty}}\|f\|_{L^1} \leq C\dfrac{\epsilon^2}{|k_{\lambda}|}\|f\|_{L^1}, \\
\|g\|_{L^1}= &\|(G_{\lambda} * f)(x)\|_{L^1} \leq \|G_{\lambda}\|_{L^1}\|f\|_{L^1} \leq C\dfrac{\epsilon^2}{|k_{\lambda}|\Re(k_{\lambda})}\|f\|_{L^1}.
\end{align}
The bounds \eqref{4lemma1} follow from  similar estimates applied to $g^\prime=G_{\lambda}^\prime * f.$

For the small-mass estimates, we window $f$ through its partition of unity, as in \eqref{windowing}, so that
\be
g_j=G_{\lambda} * f_j
\ee
satisfies $g=\sum_j g_j.$  From the definition \eqref{windownorm} of the windowed norm we see
$\|f\|_{L_{1,\vec{p}}^1}=\sum_j \|f_j\|_{L_{1,j}^1}. $
 We  decompose each $f_j$ into a smooth, localized term and a massless part,
\be
f_j=\bar{f}_j\xi_j + y_j',
\ee
for $y_j \in W^{1,1}(\mathbb R)$ and $\xi_j$ defined in \eqref{xi-def}.  Clearly for any $f$, $\|f\|_{L^1} \leq \|f\|_{L_{1,j}^1}$.  Next, we examine
\begin{align}
\|y_j\|_{L^1} = \int_{\mathbb R} \left[\partial_x(x-p_j)\right]|y_j|dx \leq& \int_{\mathbb R} |(x-p_j)y_j'|dx 
= C\int_{\mathbb R} |(x-p_j)\left(f_j-\bar{f}_j\xi_j\right)|dx, \\
\leq&C\left(\|f_j\|_{L_{1,j}^1}+\|f\|_{L^1}\int_{\mathbb R} |(x-p_j)\xi_j|dx\right)
\leq C\|f_j\|_{L_{1,j}^1}.
\end{align}
We decompose $g_j=g_{j,1}+g_{j,0}$ where $g_{j,1}=\bar{f}_jG_{\lambda} *\xi_j$ and $g_{j,0}=G_{\lambda} *y_j'=G_{\lambda}' *y_j$.  Estimating $g_{j,1}$ using \eqref{greenL11} and \eqref{1lemma1}, we have
\be
\|g_{j,1}\|_{W^{1,1}_{\xi}} = \bar{f}_j\|G_{\lambda} *\xi_j\|_{W^{1,1}_{\xi}} \leq C\dfrac{\epsilon^2}{\Re(k_{\lambda})}\bar{f}_j\|\xi_j\|_{L^1} \leq C\dfrac{\epsilon^2}{\Re(k_{\lambda})}\bar{f}_j.
\ee
The function $G_{\lambda}'$ has a jump at $x=0$, so that
$G''_{\lambda} =\left[G''_{\lambda}\right] + \eps^{2}\delta_{0}, $
where  $\left[G''_{\lambda}\right]$ is the point-wise second derivative of $G_{\lambda}$ and $\delta_0$ is the delta function at $x=0.$  This
yields the estimate,
\be
\|g_{j,0}'\|_{L^1} \leq \left(\|\left[G_{\lambda}''\right]\|_{L^1}+\epsilon^{2}\right)\|y_j\|_{L^1} \leq C\dfrac{|k_{\lambda}|}{\Re(k_{\lambda})}\epsilon^2\|f_j\|_{L_{1,j}^1}.
\ee
Using \eqref{4lemma1}, we find
\be
\|\xi g_{j,0}\|_{L^1} \leq \|\xi\|_{L^1}\|G'_{\lambda} *y_j\|_{L^{\infty}} \leq C\|G'_{\lambda}\|_{L^1}\|y_j\|_{L^1}\leq C\epsilon^2\|f_j\|_{L_{1,j}^1}.
\ee  
Summing over $j$, we have \eqref{5lemma1}.  The inequality \eqref{6lemma1} follows using \eqref{4lemma1} and \eqref{3lemma1} respectively.
\hfill\end{proof}
  
\subsection{Bounds on $\Phi(\vp\,)$}
The following lemma establishes bounds on the $N$-pulse solutions over each admissible set.
\begin{lemma}
Let  $\cK$ denote a family of $N$-pulse configurations, as constructed in Theorem \ref{thm:mean_field}.  Then for all $\beta_1,\beta_2>0$
there exists a constant $C >0$ such that $\forall\vp\in\cK$ and $k=1, \cdots N,$
\begin{align}
\label{Phi1Linfty} \|\Phi_1\|_{L^{\infty}}+  \eps^{1+\alpha/2}\left\| \Phi_1-\eps^{-\alpha/2}\frac{\rho}{\mu}\right\|_{L^{1}}+\|\partial_{p_k}\Phi_1\|_{L^1} \leq& C\eps^{-\alpha/2}, \\
\label{dxPhi1Linfty} \|\partial_x\Phi_1\|_{L^{\infty}}+\|\partial_{p_k}\Phi_1\|_{L^{\infty}} 
+ \eps^{-\alpha/2}\left\| \frac{\partial \Phi_2}{\partial p_k}\!\! +\phi_k^\prime\right\|_{H^1}\!\!\!\!+
\left\|\left(\Phi_1^{\beta_1}-\vchi\cdot\vq\,^{\beta_1}\right)\Phi_2^{\beta_2}\right\|_{L^\infty} \leq& C\eps,
\end{align}
where $\vchi=\vchi\,(\cdot,\vp\,)$ is the partition of unity subordinate to $\vp$ defined in \eqref{chi-def}. Moreover, for all $\beta>0$ and all $v\in L^1_{\beta,\vp}$ there exists $C>0$ such that
\beq
           \left\|\left(\Phi_1^\beta-\vchi\cdot \vq\,^\beta\right)v \right\|_{L^1}\leq C\eps \|v\|_{L^1_{\beta,\vp}}.
\label{mega-bound}
\eeq
\end{lemma}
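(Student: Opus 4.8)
The plan is to estimate each of the quantities in \eqref{Phi1Linfty}, \eqref{dxPhi1Linfty}, and \eqref{mega-bound} by first reducing them to properties of the explicit Green's function $G_0$ from \eqref{Ggreensfunc} and the fixed profile $\phi_0$ from \eqref{phi0-def}, using the representation formula \eqref{Phi-def} for $\Phi_1$ and $\Phi_2=\sum_j\phi_j$, together with the uniform bounds and expansion $\vq=\vq_0(\vp)+\eps^{1+\alpha/2}\vq_1(\vp)$ from Theorem\,\ref{thm:mean_field}.

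First I would handle the $\Phi_1$ bounds in \eqref{Phi1Linfty}. Writing $\Phi_1-\eps^{-\alpha/2}\rho/\mu = -\eps^{-1}L_{11}^{-e}\sum_j q_j^{\alpha_{11}}\phi_j^{\alpha_{12}}$ and applying Lemma\,\ref{l:L11}, specifically \eqref{3lemma1} with $\lambda=0$ (so $k_0=\eps^{1+\alpha/2}\sqrt{\mu}$), gives $\|\Phi_1-\eps^{-\alpha/2}\rho/\mu\|_{L^\infty}\le C\eps^{-1}\cdot(\eps^2/|k_0|)\|\sum_j q_j^{\alpha_{11}}\phi_j^{\alpha_{12}}\|_{L^1}$; since $\phi_j^{\alpha_{12}}$ has $O(1)$ mass and $\vq$ is uniformly bounded by Theorem\,\ref{thm:mean_field}, the prefactor is $\eps^{-1}\cdot\eps^{2}\cdot\eps^{-(1+\alpha/2)}=\eps^{-\alpha/2}$, and combined with $|\rho/\mu|\eps^{-\alpha/2}$ this yields $\|\Phi_1\|_{L^\infty}\le C\eps^{-\alpha/2}$. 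The weighted $L^1$ bound follows the same way from the second term of \eqref{3lemma1} (the $\Re(k_0)\|\cdot\|_{L^1}$ term gives an extra $\eps^{-(1+\alpha/2)}$), and $\|\partial_{p_k}\Phi_1\|_{L^1}$ is obtained by differentiating \eqref{Phi-def} in $p_k$: the derivative hits either $\phi_j$ (producing $\phi_j'$, still $O(1)$ mass, so the same scaling applies) or $\vq(\vp)$ (smooth with bounded derivative by the implicit function theorem argument in Theorem\,\ref{thm:mean_field}), with the convolution estimate \eqref{3lemma1} again supplying the $\eps^{-\alpha/2}$. For the first three terms of \eqref{dxPhi1Linfty} I would use \eqref{4lemma1} instead: $\partial_x\Phi_1=-\eps^{-1}\partial_x L_{11}^{-e}(\cdots)$, and \eqref{4lemma1} gives $\|\partial_x\Phi_1\|_{L^\infty}\le C\eps^{-1}\cdot\eps^2\|\sum_j q_j^{\alpha_{11}}\phi_j^{\alpha_{12}}\|_{L^1}\le C\eps$; the $\partial_{p_k}$ version is analogous, and the $\partial_{p_k}\Phi_2+\phi_k'$ term is exactly the statement that differentiating $\phi_j(x-p_j)$ in $p_k$ gives $-\phi_k'$ plus corrections from $\partial_{p_k}q_j=O(1)$, which contribute $O(\eps^{1+\alpha/2})\cdot\|\partial_{q_j}\phi_j\|_{H^1}=O(\eps)$ after multiplying by the stated $\eps^{-\alpha/2}$.

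The genuinely new estimate is the $L^\infty$ bound on $(\Phi_1^{\beta_1}-\vchi\cdot\vq^{\beta_1})\Phi_2^{\beta_2}$ in \eqref{dxPhi1Linfty} and its $L^1$ analog \eqref{mega-bound}; this is where the main work lies. The point is that on the $j$-th window $\chi_j$, where $\Phi_2$ is essentially $\phi_j$ and hence non-negligible only within $O(|\ln\eps|)$ of $p_j$, one must show $\Phi_1(x)-q_j$ is $O(\eps\,\eps^{-\alpha/2}\cdot\eps^{\,?})$ — more precisely $O(\eps)$ after the cancellation with $\vchi\cdot\vq^{\beta_1}$, when weighted against $\Phi_2^{\beta_2}$ or against $|x-p_j|^\beta$. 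The mechanism is that $q_j=\Phi_1(p_j)$ exactly by the mean-field equation \eqref{mf-eq}, so $\Phi_1(x)-q_j=\Phi_1(x)-\Phi_1(p_j)=\int_{p_j}^x\partial_x\Phi_1$, and by the bound $\|\partial_x\Phi_1\|_{L^\infty}\le C\eps$ just established this is $O(\eps|x-p_j|)$; on the support of $\Phi_2^{\beta_2}$, i.e. $|x-p_j|\lesssim|\ln\eps|$, this is $O(\eps|\ln\eps|)$, and then one uses the exponential decay of $\phi_0^{\beta_2}$ to absorb the logarithm, while for \eqref{mega-bound} the factor $(1+|x-p_j|^\beta)$ in the weighted norm is exactly what pairs with the $|x-p_j|$ from the mean-value estimate and the $L^1$ integrability to close at $O(\eps)$. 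For $\Phi_1^{\beta_1}-\vchi\cdot\vq^{\beta_1}$ rather than $\Phi_1-\vchi\cdot\vq$ one first factors $a^{\beta_1}-b^{\beta_1}=(a-b)\int_0^1\beta_1(b+t(a-b))^{\beta_1-1}dt$ and uses that both $\Phi_1$ and $q_j$ are comparable to $\eps^{-\alpha/2}$ (uniformly bounded above and below after that scaling), so the integral factor is $O(\eps^{-\alpha(\beta_1-1)/2})$, and one checks the powers of $\eps$ balance: the $\eps$ from $\partial_x\Phi_1$ against the $\eps^{-\alpha/2}$ scalings is precisely what the normal-hyperbolicity budget is designed to accommodate. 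The main obstacle is bookkeeping the $\eps^{-\alpha/2}$ amplitude scalings consistently across the windows and verifying that the logarithmic factors from the pulse widths are genuinely absorbed by exponential decay rather than merely by powers of $\eps$; once that is organized, each individual estimate reduces to \eqref{3lemma1}, \eqref{4lemma1}, and elementary properties of $\phi_0$.
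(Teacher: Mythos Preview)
Your overall strategy---reduce to the Green's function estimates of Lemma~\ref{l:L11} and use the mean-value relation $\Phi_1(x)-q_j=\int_{p_j}^x\partial_x\Phi_1$---matches the paper's, but two scaling errors prevent the argument from closing.

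First, $\partial_{p_k}q_j$ is \emph{not} $O(1)$ but $O(\eps^{1+\alpha/2})$. The implicit-function argument in Theorem~\ref{thm:mean_field} is carried out after the rescaling $\tilde p=\eps^{1+\alpha/2}\vp$; the map $\vq$ is smooth in $\tilde p$, so its $\vp$-gradient picks up the factor $\eps^{1+\alpha/2}$. This is essential: if you feed only $\partial_{p_k}q_j=O(1)$ into \eqref{3lemma1} at $\lambda=0$, the $\partial_{p_k}q$ contribution to $\|\partial_{p_k}\Phi_1\|_{L^1}$ comes out as $\eps^{-1}\cdot \eps^2/|k_0|^2\cdot O(1)=\eps^{-1-\alpha}$, not $\eps^{-\alpha/2}$, and the $L^\infty$ bound gives $\eps^{-\alpha/2}$ rather than $\eps$. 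Your later line ``$\partial_{p_k}q_j=O(1)$, which contribute $O(\eps^{1+\alpha/2})\cdot\|\partial_{q_j}\phi_j\|_{H^1}$'' is internally inconsistent; it is precisely the $O(\eps^{1+\alpha/2})$ that comes from $\partial_{p_k}q_j$ itself.

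Second, $q_j$ is uniformly $O(1)$, not comparable to $\eps^{-\alpha/2}$; this is the content of the uniform bounds in Theorem~\ref{thm:mean_field}. Near $p_j$ one has $\Phi_1\approx q_j=O(1)$, and globally $|\Phi_1(x)|\le q_j+C\eps|x-p_j|$ by the derivative bound you already established. Hence in the factorization $\Phi_1^{\beta_1}-q_j^{\beta_1}=(\Phi_1-q_j)\cdot\beta_1\int_0^1(\cdots)^{\beta_1-1}$, the bracket is at worst $(q_j+C\eps|x-p_j|)^{\beta_1-1}$, and after dividing by $1+|x-p_j|^{\beta}$ (for \eqref{mega-bound}) or multiplying by the exponentially decaying $\Phi_2^{\beta_2}$ (for \eqref{dxPhi1Linfty}) the resulting quantity is uniformly $O(\eps)$ with no residual $\eps^{-\alpha(\beta_1-1)/2}$ factor. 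There is no ``normal-hyperbolicity budget'' involved here; the lemma is a purely static estimate and the powers balance cleanly once you use the correct $O(1)$ scaling of $\vq$.
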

\begin{proof} 
To establish the bounds on the first two terms in \eqref{Phi1Linfty}, we apply \eqref{3lemma1}, with $\lambda=0$, to \eqref{Phi1-defb} and 
recall that the $\vq$ are uniformly $O(1).$ For the final term of \eqref{Phi1Linfty}, we take $\partial_{p_k}$ of \eqref{Phi1-defb} to obtain,
 \beq
  L_{11}^e \partial_{p_k} \Phi_1 = \eps^{-1}\partial_x\left( \sum\limits_j q_j^{\alpha_{11}}\phi_j^{\alpha_{22}}\right)- 
  \eps^{-1}\sum_{ij}\partial_{q_i}\left(q_j^{\alpha_{11}}\phi_j^{\alpha_{22}}\right)\frac{\partial q_i}{\partial p_k}.\eeq
Inverting $L_{11}^e$ and applying \eqref{4lemma1} with $\lambda=0$  to the first term on the right-hand side, and \eqref{3lemma1}
together with the bound on $\partial_{p_k}q_j$ to the second term establishes the result. 
The first estimate in \eqref{dxPhi1Linfty} follows from applying \eqref{4lemma1} to the derivative of \eqref{Phi1-defb} at $\lambda=0$. The smooth dependence of $\vq$ on $\tp$ and the change of variables $\tp=\vp\eps^{1+\alpha/2}$ imply
 that $ |\frac{\partial q_j}{\partial p_k}|=O(\eps^{1+\alpha/2})$, from which the second and third estimates of \eqref{dxPhi1Linfty} follow. The final estimate of \eqref{dxPhi1Linfty} follows from the $O(\eps)$ $L^\infty$-bound on $\partial_x\Phi_1$, the fact that
$\left(\Phi_1^{\beta_1}-\vchi\cdot \vq\,^{\beta_1}\right)(p_j)=0$, and the exponential decay of $\Phi_2$ away from $x=p_j.$   
The estimate \eqref{mega-bound} follows from the bound
$$ \left\|\chi_j\Frac{\Phi_1^\beta-q_j^\beta}{1+|x-p_j|^\beta}\right\|_{L^\infty}\leq C\eps.$$
\hfill\end{proof}

\subsection{Finite-rank reduction lemma}
The following Lemma is a key tool in identifying the finite-rank reduction of the singularly scaled operator $L_{11}^e+\lambda$, by describing its structure.
\begin{lemma} 
For each pulse separation parameter $\ell>0$ there exists $C>0$ such that for all 
$\lambda \in \bC $ \verb \  $(-\infty,-\epsilon^{\alpha}\mu)$ and $\vp \in \cK_{\ell}$, the following holds
\be
\label{residualmatrix} \Big{|}\big{(} (L_{11}^e+\lambda)^{-1}f,g \big{)} _{L^2} - (\otimes \vec{\chi} \cdot f)^T G_N(\vp,\lambda) \otimes \vec{\chi} \cdot g\Big{|} \leq C\epsilon^2\|f\|_{L^1_{1,\vec{p}}}\|g\|_{L^1_{1,\vec{p}}},
\ee
in terms of the unscaled two-point correlation matrix \eqref{unscaled-G}, for all $f,g \in L^1_{1,\vp}$, defined in \eqref{windownorm}.
\end{lemma}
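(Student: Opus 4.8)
The plan is to reduce the claim to a single pointwise estimate on the resolvent kernel that is uniform in $\lambda$. First, by \eqref{greenL11}--\eqref{Ggreen},
\[
\bigl((L_{11}^e+\lambda)^{-1}f,g\bigr)_{L^2}=\frac{\eps^2}{k_\lambda}\int_{\bR^2}e^{-k_\lambda|x-y|}f(y)g(x)\,dy\,dx ,
\]
and, since $\otimes\vchi\cdot f=(\overline{f}_1,\dots,\overline{f}_N)^T$ with $\overline{f}_i=\int_\bR\chi_i f$, the finite-rank term $(\otimes\vchi\cdot f)^T G_N(\vp,\lambda)\otimes\vchi\cdot g$ is exactly the same double integral with $e^{-k_\lambda|x-y|}$ replaced by $e^{-k_\lambda|p_i-p_j|}$, once one writes $f=\sum_i\chi_i f$ and $g=\sum_j\chi_j g$ through the partition of unity \eqref{chi-def} (which is well defined for $\vp\in\cK_\ell$). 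Thus the quantity to be estimated is
\[
\frac{\eps^2}{k_\lambda}\sum_{i,j=1}^N\int_{\bR^2}\bigl(e^{-k_\lambda|x-y|}-e^{-k_\lambda|p_i-p_j|}\bigr)\chi_i(y)f(y)\,\chi_j(x)g(x)\,dy\,dx .
\]

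The heart of the argument is the kernel bound: for all $a,b\ge0$ and every $\lambda\notin(-\infty,-\eps^\alpha\mu)$, for which $\Re(k_\lambda)\ge0$,
\[
\bigl|e^{-k_\lambda a}-e^{-k_\lambda b}\bigr|=|k_\lambda|\,\Bigl|\int_b^a e^{-k_\lambda s}\,ds\Bigr|\le|k_\lambda|\,\Bigl|\int_b^a e^{-\Re(k_\lambda)s}\,ds\Bigr|\le|k_\lambda|\,|a-b| ,
\]
where the last step uses $e^{-\Re(k_\lambda)s}\le1$ for $s\ge\min(a,b)\ge0$. Taking $a=|x-y|$ and $b=|p_i-p_j|$, followed by the triangle inequality $\bigl||x-y|-|p_i-p_j|\bigr|\le|x-p_j|+|y-p_i|$, gives
\[
\bigl|e^{-k_\lambda|x-y|}-e^{-k_\lambda|p_i-p_j|}\bigr|\le|k_\lambda|\bigl(|x-p_j|+|y-p_i|\bigr) .
\]
The decisive feature is that this factor $|k_\lambda|$ exactly cancels the $1/k_\lambda$ in front of the double sum, so the resulting estimate holds uniformly in $\lambda$ with no restriction on the size of $k_\lambda$ and with no use of the decay of either exponential.

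To finish, I insert this bound into the double sum and absorb the weights into the windowed norm: $\int_\bR\chi_i|f|\le\|\chi_if\|_{L^1_{1,i}}$ and $\int_\bR|x-p_i|\,\chi_i|f|\le\|\chi_if\|_{L^1_{1,i}}$, and symmetrically for $g$. The two resulting terms are then each bounded by $\eps^2\bigl(\sum_i\|\chi_if\|_{L^1_{1,i}}\bigr)\bigl(\sum_j\|\chi_jg\|_{L^1_{1,j}}\bigr)=\eps^2\|f\|_{L^1_{1,\vp}}\|g\|_{L^1_{1,\vp}}$, and adding them yields \eqref{residualmatrix} with $C=2$. I do not expect a real obstacle: the only point that must be checked is that $\Re(k_\lambda)\ge0$ on the whole region $\lambda\notin(-\infty,-\eps^\alpha\mu)$ — which holds because $\lambda+\eps^\alpha\mu$ then avoids the negative real axis, forcing its principal square root $k_\lambda$ into the closed right half-plane — and it is exactly this sign that upgrades the trivial bound $|e^{-k_\lambda a}-e^{-k_\lambda b}|\le2$ to the $|k_\lambda|$-gaining estimate on which everything rests.
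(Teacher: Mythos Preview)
Your argument is correct and is essentially the same as the paper's. Both proofs window $f$ and $g$ via the partition of unity, identify the finite-rank term as the zeroth-order contribution, and then control the remainder by a Lipschitz bound on the kernel $G_\lambda(z)=\frac{\eps^2}{k_\lambda}e^{-k_\lambda|z|}$: the paper phrases this as a Taylor expansion of $G_\lambda$ about $p_i-p_j$ together with $\|G_\lambda'\|_{L^\infty}\le C\eps^2$, while you make the same mechanism explicit by bounding $|e^{-k_\lambda a}-e^{-k_\lambda b}|\le|k_\lambda|\,|a-b|$ so that the $|k_\lambda|$ cancels the prefactor $\eps^2/k_\lambda$. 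The subsequent absorption of $|x-p_j|$ and $|y-p_i|$ into the $L^1_{1,\vp}$ norm is identical.
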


Proof: Windowing $f$ and $g$ as in \eqref{windowing}, we find
\begin{align} 
\label{windowfg} \left((L_{11}^e+\lambda)^{-1}f,g \right)_{L^2} =& \sum_{i,j=1}^N \left((L_{11}^e+\lambda)^{-1}f_i,g_j \right)_{L^2} 
= \sum_{i,j=1}^N \int \int G_{\lambda}(y-x)f_i(y)g_j(x)dy\,dx.
\end{align}
For any $\vp\in\cK,$ the Greens function $G_\lambda$  admits the Taylor expansion about $p_i-p_j$,
\begin{align}
\label{Gtaylor} G_{\lambda}(y-x)
=& G_{\lambda}(p_i-p_j)+G_{\lambda}'(s)\big{(}(y-p_i)-(x-p_j)\big{)},
\end{align}
for some $s=s(x,y) \in \mathbb{R}$. Substituting the Taylor expansion for $G_\lambda$ into \eqref{windowfg} we observe that
\beq
\sum_{i,j=1}^N \int\int G_{\lambda}(p_i-p_j)f_i(y)g_j(x)dy\,dx =(\otimes \vec{\chi} \cdot f)^T G^N\otimes \vec{\chi} \cdot g,
\eeq
while  $\|G_\lambda^\prime\|_{L^\infty} \leq C\eps^2$. Finally, recalling the definition \eqref{windownorm} of the
windowed norm, we obtain \eqref{residualmatrix}. \hfill $\Box$

\subsection{Residual estimates}
Recalling that $\cF$ denotes the right-hand side of \eqref{UV}, then the residual $\cF(\Phi)$ takes the form
\be
\label{resphi} \cF(\Phi) = \begin{pmatrix} \cF_1(\Phi) \\ \cF_2(\Phi) \end{pmatrix} = 
\left(\ba c  \epsilon^{-2}\partial_{x}^2\Phi_1- \epsilon^{\alpha}\mu \Phi_1-\epsilon ^{-1}\Phi_1^{\alpha_{11}}\Phi_2^{\alpha_{12}}+
\eps^{\alpha/2}\rho \\ \partial_{x}^2\Phi_2-\Phi_2+\Phi_1^{\alpha_{21}}\Phi_2^{\alpha_{22}} \ea \right),
\ee
and enjoys the following properties.

\begin{proposition} Let $\cK$ be a family of $N$-pulse configurations, as constructed in Section 2, with minimal pulse separation $\ell$. 
Then there exists $r=r(\ell)$, which grows at an $O(1)$ exponential rate in $\ell$, such that for all $\vp \in \cK$,
 the residual satisfies the following asymptotic formula
\be
\label{resasymp} \left(\ba{c} \cF_1(\Phi) \\ \cF_2(\Phi) \ea \right)=\left(\ba c \epsilon^{-1}\sum_{j=1}^N(\Phi_1^{\alpha_{11}}-q_j^{\alpha_{11}})\phi_j^{\alpha_{12}}+O(\epsilon^r) \\ \sum_{j=1}^N\left(\Phi_1^{\alpha_{21}}-q_j^{\alpha_{21}}\right)\phi_j^{\alpha_{22}} +O(\epsilon^r) \ea \right),
\ee
for $r=r(\ell)>0$ large.  Moreover, there exists $C>0,$ independent of $\epsilon$ and $\vp \in \mathcal K_{\ell}$ such that the following estimate holds,
\beq
\label{3lemma2} \eps \left(\|\cF_1(\Phi)\|_{L^1}+\|\nabla_\vp\cF_1(\Phi)\|_{L^1}\right)+ \|\cF_2(\Phi)\|_{L^2} +\|\nabla_\vp\cF_2(\Phi)\|_{L^2}\leq C\eps.
\eeq
\end{proposition}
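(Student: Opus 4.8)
The plan is to establish the asymptotic formula \eqref{resasymp} first, and then derive the norm bound \eqref{3lemma2} from it together with the Lemma bounds on $\Phi(\vp\,)$. For the second component, $\cF_2(\Phi)$, I would substitute $\Phi_2=\sum_j\phi_j$ and use that each $\phi_j$ satisfies the equilibrium equation \eqref{phij-def}, so that $\phi_j''-\phi_j+q_j^{\alpha_{21}}\phi_j^{\alpha_{22}}=0$; summing these and subtracting from $\cF_2(\Phi)$ leaves $\sum_j(\Phi_1^{\alpha_{21}}-q_j^{\alpha_{21}})\phi_j^{\alpha_{22}}$ plus cross terms of the form $\phi_j^{a}\phi_k^{b}$ with $j\neq k$. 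These cross terms are controlled by the exponential decay of $\phi_0$ at an $O(1)$ rate (the first sentence of Section 2): on the support where they are not exponentially small in $\ell$ — i.e. within the windowing region of $p_j$ — the factor $\phi_k$ is evaluated at distance $\gtrsim \ell|\ln\eps|/2$ from $p_k$, giving a factor $\phi_0(\ell|\ln\eps|/2)\sim O(\eps^{r})$ with $r=r(\ell)$ growing linearly in $\ell$. This is exactly the tail-tail interaction strength quantified at the start of Section 2. An entirely parallel computation handles $\cF_1(\Phi)$: here $\Phi_1$ solves the \emph{linear} equation \eqref{Phi1-defb}, in which the nonlinearity $\eps^{-1}\Phi_1^{\alpha_{11}}\Phi_2^{\alpha_{12}}$ was replaced by $\eps^{-1}\sum_j q_j^{\alpha_{11}}\phi_j^{\alpha_{12}}$; subtracting \eqref{Phi1-defb} from $\cF_1(\Phi)$ in \eqref{resphi} gives $\eps^{-1}\left(\sum_j q_j^{\alpha_{11}}\phi_j^{\alpha_{12}} - \Phi_1^{\alpha_{11}}\Phi_2^{\alpha_{12}}\right)$, and again expanding $\Phi_2^{\alpha_{12}}=\left(\sum_j\phi_j\right)^{\alpha_{12}}$ and collecting the diagonal $j=k$ term against the cross terms yields $\eps^{-1}\sum_j(\Phi_1^{\alpha_{11}}-q_j^{\alpha_{11}})\phi_j^{\alpha_{12}}+O(\eps^{r})$, where here the $O(\eps^{r})$ already absorbs the $\eps^{-1}$ prefactor since $r$ can be taken as large as desired by increasing $\ell$ (this is how $r$ is characterized in the Proposition statement).

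For the norm estimates \eqref{3lemma2}, I would use the asymptotic formula just derived together with \eqref{dxPhi1Linfty} and \eqref{mega-bound}. The key term in $\cF_2(\Phi)$ is $\sum_j(\Phi_1^{\alpha_{21}}-q_j^{\alpha_{21}})\phi_j^{\alpha_{22}}$; since $\Phi_1^{\alpha_{21}}-q_j^{\alpha_{21}}$ vanishes at $x=p_j$ and $\partial_x\Phi_1$ is $O(\eps)$ in $L^\infty$ by \eqref{dxPhi1Linfty}, we get $|\Phi_1^{\alpha_{21}}-q_j^{\alpha_{21}}|\leq C\eps|x-p_j|$ on the support of $\phi_j$, and then $\|(\Phi_1^{\alpha_{21}}-q_j^{\alpha_{21}})\phi_j^{\alpha_{22}}\|_{L^2}\leq C\eps\| |x-p_j|\phi_0^{\alpha_{22}}(x-p_j)\|_{L^2}\leq C\eps$ by the exponential decay of $\phi_0$. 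For $\cF_1(\Phi)$, the leading term $\eps^{-1}\sum_j(\Phi_1^{\alpha_{11}}-q_j^{\alpha_{11}})\phi_j^{\alpha_{12}}$ carries the dangerous $\eps^{-1}$ prefactor, but the same mean-value bound gives a compensating factor $\eps$ from $|\Phi_1^{\alpha_{11}}-q_j^{\alpha_{11}}|\leq C\eps|x-p_j|$, so $\eps\|\cF_1(\Phi)\|_{L^1}\leq C\eps \cdot \eps^{-1}\cdot \eps \cdot \| |x-p_j|\phi_0^{\alpha_{12}}\|_{L^1}\leq C\eps$, matching the right-hand side of \eqref{3lemma2}. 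The $O(\eps^r)$ remainder is negligible since $r\geq 2$. The gradient terms $\nabla_\vp\cF_1(\Phi)$ and $\nabla_\vp\cF_2(\Phi)$ are handled identically: differentiating in $p_k$ and using the bounds $\|\partial_{p_k}\Phi_1\|_{L^1}\leq C\eps^{-\alpha/2}$, $\|\partial_{p_k}\Phi_1\|_{L^\infty}\leq C\eps$, $|\partial_{p_k}q_j|=O(\eps^{1+\alpha/2})$, and $\|\partial_{p_k}\Phi_2+\phi_k'\|_{H^1}\leq C\eps^{\alpha/2}\cdot\eps$ from \eqref{Phi1Linfty}–\eqref{dxPhi1Linfty} shows that $\partial_{p_k}$ acting on each factor either reproduces a term of the same order (the $\partial_x\Phi_2\sim-\phi_k'$ term) or lands on a coefficient producing higher-order smallness.

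The main obstacle I anticipate is bookkeeping rather than any genuine analytic difficulty: one must carefully track, for each product $\phi_j^{a}\phi_k^{b}$ appearing after the multinomial expansion of $\Phi_2^{\alpha_{12}}$ or $\Phi_2^{\alpha_{22}}$ (including the non-integer-power subtlety when $\alpha_{12},\alpha_{22}$ are not integers, which requires a pointwise Taylor/mean-value estimate on $s\mapsto s^{\alpha_{12}}$ rather than a clean binomial expansion), which terms are diagonal and contribute to the stated leading term versus which are off-diagonal and hence $O(\eps^r)$; and one must verify that the $\eps^{-1}$ prefactor in $\cF_1$ is genuinely absorbed, i.e. that $r(\ell)$ can be chosen with $r(\ell)-1$ still growing in $\ell$ — which holds since $r\sim\ln\ell$ in the notation of Section 2 but more precisely the tail interaction is $\phi_0^{\alpha_{22}}(\ell|\ln\eps|/2)=O(\eps^{c\ell})$ for some $c>0$ depending on the $O(1)$ decay rate of $\phi_0$, so $\eps^{-1}\cdot\eps^{c\ell}=O(\eps^{c\ell-1})$ is still large for $\ell$ large. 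A secondary point requiring care is that all estimates must be uniform over $\cK$, which follows because the uniform upper and lower bounds on the components of $\vq$ from Theorem \ref{thm:mean_field} make all the constants in the $\phi_j$-expansions uniform, and the windowed norms $L^1_{1,\vp}$, $H^1_{\oalpha,\vp}$ are defined precisely to be uniform across the $\chi_j$-windows.
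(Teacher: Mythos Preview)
Your proposal is correct and follows essentially the same approach as the paper: use the defining equations \eqref{phij-def} and \eqref{Phi1-defb} to cancel the leading terms, isolate the diagonal contributions $\sum_j(\Phi_1^{\alpha_{i1}}-q_j^{\alpha_{i1}})\phi_j^{\alpha_{i2}}$, bound the off-diagonal cross terms by the $O(\eps^r)$ tail-tail interaction, and then apply the $O(\eps)$ bound on $\partial_x\Phi_1$ (equivalently the last estimate of \eqref{dxPhi1Linfty}) to get the norm estimates. Your mean-value argument $|\Phi_1^{\alpha_{i1}}-q_j^{\alpha_{i1}}|\leq C\eps|x-p_j|$ is precisely what underlies the paper's citation of \eqref{dxPhi1Linfty}, and your anticipated obstacles (non-integer exponents, absorption of the $\eps^{-1}$ prefactor, uniformity over $\cK$) are all handled exactly as you indicate.
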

\begin{proof}
 We first examine $\cF_2(\Phi)$ in the $L^2$ norm.  Using \eqref{phij-def} to eliminate the dominant terms we find
\beq
\label{R_2phij1} \|\cF_2(\Phi)\|_{L^2} \leq 
\left\|\Phi_1^{\alpha_{21}}\left(\left(\Sigma_{j=1}^N\phi_j\right)^{\alpha_{22}}-\Sigma_{j=1}^N\phi_j^{\alpha_{22}}\right)\right\|_{L^2}
+ \left|\left|\Sigma_{j=1}^N\left(\Phi_1^{\alpha_{21}}-q_j^{\alpha_{21}}\right)\phi_j^{\alpha_{22}}\right|\right|_{L^2}.
\eeq
The first term  is dominated by tail-tail interactions between the localized
pulses $\phi_j$. Since these pulses decay at an $O(1)$ rate and since $\alpha_{22}>1$, $\phi_j\phi_k=O(\eps^r)$ for $j\neq k$ in any polynomially weighted norm where
$r=r(\ell)$ grows exponentially in pulse separation $\ell.$ It follows from the $L^\infty$ bound,  \eqref{Phi1Linfty}, on $\Phi_1$ that
this term is $O(\eps^r)$ where $r$ can be made as large as desired.  
It is the second term which is dominant, which from the last estimate of \eqref{dxPhi1Linfty} is $O(\eps)$.

To examine the $L^1$ norm of $\cF_1(\Phi)$ we use \eqref{Phi1-defb} to rewrite the first of component of $\cF(\Phi)$ and apply the triangle inequality, 
\beq
\label{R_1phij1} \left|\left|\cF_1(\Phi)\right|\right|_{L^1} \leq
 \epsilon^{-1}\left|\left|\Sigma_{j=1}^N(\Phi_1^{\alpha_{11}}-q_j^{\alpha_{11}})\phi_j^{\alpha_{12}}\right|\right|_{L^1}
+ \epsilon^{-1}\left|\left|\Phi_1^{\alpha_{11}}\left((\Sigma_{j=1}^N\phi_j)^{\alpha_{12}}-\Sigma_{j=1}^N\phi_j^{\alpha_{12}}\right)\right|\right|_{L^1}.
\eeq
The second term is dominated by the tail-tail interaction and is $O(\eps^r),$ while the first term is $O(1)$ by \eqref{dxPhi1Linfty}.
The asymptotic formula \eqref{resasymp} follows by identifying the leading order terms. 

The estimates on the $\nabla_\vp$ terms
follow from the bounds of \eqref{dxPhi1Linfty} which show that $\|\nabla_\vp\, \Phi_1\|_{L^\infty}$ is of the same order as
$\|(\Phi_1-q_j)\phi_j\|_{L^\infty}.$
\hfill \end{proof}

\section{The Linearization and its Spectrum}
The linearization of  $\cF$ about $\Phi(\cdot, \vp)$ is the linear operator $L_{\vp}$ defined by
\be
\label{lin-op} L_{\vec{p}} \equiv \left( \ba{cc}  -L_{11}^e - \epsilon^{-1}\alpha_{11}\Phi_1^{\alpha_{11}-1}\Phi_2^{\alpha_{12}} & -\epsilon^{-1}\alpha_{12}\Phi_1^{\alpha_{11}}\Phi_2^{\alpha_{12}-1} \\  &  \\ \alpha_{21}\Phi_1^{\alpha_{21}-1}\Phi_2^{\alpha_{22}} & \partial_x^2-1+ \alpha_{22}\Phi_1^{\alpha_{21}}\Phi_2^{\alpha_{22}-1}  \ea \right).
\ee
A direct study of these operators is complicated by the spatially varying potentials. In what follows we show that the singular
nature of the term $L_{11}^e$ allows us to approximate the spatially varying terms with finite-rank operators.  In this vein 
we introduce the reduced linearization
\be
\label{tildeL} \tilde{L}_{\vec{p}} \equiv 
\left( \ba{cc}  -L_{11}^e  & 0 \\  &  \\ J_{21} & \tilde{L}_{22}  \ea \right) - \epsilon^{-1}\left( \ba{cc}  J_{11} & J_{12} \\  &  \\ 0 & 0 \ea \right),
\ee
where $J_{11}$ and $J_{12}$ are finite rank operators
\begin{align}
\label{J11} J_{11} =& \alpha_{11}\vec{\xi}\hspace{.02in}^T \otimes (\Phi_2^{\alpha_{12}}\Phi_1^{\alpha_{11}-1}\vec{\chi}), \\
\label{J12} J_{12} =& \alpha_{12}\vec{\xi}\hspace{.02in}^T \otimes (\Phi_2^{\alpha_{12}-1}\Phi_1^{\alpha_{11}}\vec{\chi}),
\end{align}
and the operators on the second row are 
\begin{align}
\tilde{L}_{22} = & \partial_x^2-1 + \alpha_{22}\sum_{j=1}^N  \phi_0^{\alpha_{22}-1}(x-p_j), \\
\label{J21-def}
J_{21} =  &\alpha_{21}\sum_{j=1}^N q_j^{{\alpha_{21}}-1}\phi_j^{\alpha_{22}}.
\end{align}
The differences between ${L}_{\vec{p}}$ and $\tilde{L}_{\vec{p}}$ are large, but the difference has zero mass in each $\vp$-window and
hence is strongly controlled by the singular structure of $L_{11}^{-e}.$ The essential spectra of $L_{\vec{p}}$ and $\tilde{L}_{\vec{p}}$ coincide,
\be
\sigma_{ess}(L_{\vec{p}})=\sigma_{ess}(\tilde{L}_{\vec{p}})=B:=\left\{-\epsilon^{-2}k^2-\epsilon^{\alpha}\mu | k \in \mathbb{R} \right\}.
\ee
Although we do not pursue this issue, the point spectra of the two operators are also asymptotically close. 

\subsection{The point spectrum}

Modulo the finite-rank perturbations, the operator $\tL_{\vec p}$ is lower triangular. This reduced structure affords a precise
characterization of its point spectrum, up to the analysis of an explicit $N\times N$ matrix. So long as the
pulse-pulse separation distance $\ell>0$ in $\cK_\ell$ renders the local tail-tail interactions higher order, then the point spectrum is controlled by the slowly varying component $U$.  
Estimates on the resolvent of $\tL_{11}$ are given in Lemma\,\ref{l:L11}, the following Lemma gives bounds on the resolvent of $\tL_{22}$.

\begin{lemma}
Fix a contour $\cC$ of type \eqref{contour} and $\ell>0$. Then for all $\beta>0$ there exists $C>0$ such that
\beq
   \| (\tL_{22}-\lambda)^{-1} f\|_{H^1_{\beta,\vp}}\leq C\|f\|_{L^1_{\beta, \vp}}, \label{tL22-est}
 \eeq
 for all $\lambda$ on and to the right of $\cC$ and for all $\vp\in\cK.$
 \end{lemma}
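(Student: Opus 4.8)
The plan is to reduce the resolvent estimate for $\tL_{22}$ to the well-understood single-pulse operators. First I would recognize that $\tL_{22}=\partial_x^2-1+\alpha_{22}\sum_{j=1}^N\phi_0^{\alpha_{22}-1}(x-p_j)$ is, up to exponentially small tail-tail interactions, a direct sum of $N$ translated copies of the single-pulse operator $L_{22}^{(0)}:=\partial_x^2-1+\alpha_{22}\phi_0^{\alpha_{22}-1}$, one centered at each $p_j$. The operator $L_{22}^{(0)}$ is a Schr\"odinger operator with an exponentially localized, bounded potential; its essential spectrum is $(-\infty,-1]$ and its point spectrum is finite, consisting of the principal eigenvalue (with eigenfunction $\phi_0^{(\alpha_{22}+1)/2}$, up to normalization) and the translational zero eigenvalue with eigenfunction $\phi_0'$, all other point spectrum being strictly negative. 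Since the contour $\cC$ of type \eqref{contour} passes through $-\eps^\alpha\nu$ with $\nu\in(0,\mu)$, i.e.\ asymptotically near the origin, and opens into the left half-plane at angle $5\pi/6$, every $\lambda$ on or to the right of $\cC$ is bounded away (uniformly in $\eps$) from the spectrum of each $L_{22}^{(j)}$ --- crucially, this uses that the translational eigenvalue at $0$ of $\tL_{22}$ is only an eigenvalue of the full $\tL_{\vp}$ through coupling, and that here we are estimating $\tL_{22}$ itself whose relevant resolvent set lies to the right of $\cC$; in fact the eigenvalue at the origin is an issue only for $L_{\vp}$, while $\cC$ is constructed so that to its right $\tL_{22}-\lambda$ is boundedly invertible.

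Next I would establish the single-pulse resolvent bound $\|(L_{22}^{(j)}-\lambda)^{-1}f\|_{H^1_{\beta,j}}\le C\|f\|_{L^1_{\beta,j}}$ uniformly for $\lambda$ to the right of $\cC$. This is a classical weighted-resolvent estimate: one constructs the Green's function of $L_{22}^{(j)}-\lambda$ from the decaying Jost solutions at $\pm\infty$, which decay like $e^{-\sqrt{1+\lambda}|x-p_j|}$ with $\Re\sqrt{1+\lambda}$ bounded below since $\lambda$ avoids $(-\infty,-1]$; the exponential localization of the potential perturbation and the distance from the point spectrum give a Green's function that is bounded and exponentially decaying off the diagonal, with constants uniform in the relevant $\lambda$-range. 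Convolving against the polynomially-weighted $L^1$ datum $f$, the polynomial weight $(1+|x-p_j|^\beta)$ is absorbed because $|x-p_j|^\beta e^{-c|x-y|}\le C(1+|y-p_j|^\beta)$, which controls the weighted $L^1$ part of the $H^1_{\beta,j}$ norm; the $\dot H^1$ part follows by differentiating the Green's function representation and using that $\partial_x$ of the kernel is still exponentially localized (with an integrable logarithmic or jump singularity on the diagonal that is harmless in $L^2$). I would phrase this using the same Fourier/convolution toolkit already invoked in Lemma\,\ref{l:L11}.

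Then I would assemble the multi-pulse estimate. Using the partition of unity $\{\chi_j(\vp)\}$, write $f=\sum_j f_j$ with $f_j=\chi_j f$, solve $(\tL_{22}-\lambda)u=f$, and compare $u$ with $\sum_j v_j$ where $(L_{22}^{(j)}-\lambda)v_j=f_j$. The difference $w:=u-\sum_j v_j$ satisfies $(\tL_{22}-\lambda)w$ equal to a sum of terms each supported where two pulse tails overlap or where $\chi_j$ transitions, hence of size $O(\eps^r)$ in $L^1_{\beta,\vp}$ by the choice of $\ell$ in $\cK_\ell$ (recall $\phi_0^2(\ell/2)\sim\eps^r$, $r=r(\ell)$ large); bootstrapping through the same single-pulse bound absorbs $w$. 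Summing the single-pulse bounds over $j$ and using that the windows are essentially disjoint (so $\sum_j\|f_j\|_{L^1_{\beta,j}}\le C\|f\|_{L^1_{\beta,\vp}}$ and conversely $\|\sum_j v_j\|_{H^1_{\beta,\vp}}\le C\sum_j\|v_j\|_{H^1_{\beta,j}}$, again up to exponentially small cross terms) yields \eqref{tL22-est}.

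The main obstacle is the uniformity of the single-pulse resolvent bound as $\lambda$ approaches the origin along $\cC$ --- i.e.\ near $\eps^\alpha\nu$, which is asymptotically close to the edge case where $(L_{22}^{(j)}-\lambda)^{-1}$ would see the translational eigenvalue at $0$. One must check that the presence of the zero eigenvalue of $L_{22}^{(j)}$ does \emph{not} spoil the estimate on $\cC$: since $\cC$ stays strictly in the open left half-plane at distance $\eps^\alpha\nu>0$ from the origin, and the eigenprojection onto $\phi_0'$ contributes a term $\sim(0-\lambda)^{-1}(\,\cdot\,,\phi_0')\phi_0'$ of size $O(\eps^{-\alpha})$, this is in fact \emph{not} uniformly bounded --- so the correct reading is that the Lemma's constant $C$ is allowed to absorb the distance to $\cC$, or, more likely in the authors' setup, that the relevant comparison is with a version of $L_{22}^{(j)}$ restricted off its kernel, or $\cC$ is understood to encircle the near-origin eigenvalues which are then handled separately via $\pi_\vp$. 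Resolving precisely which convention is in force --- and verifying the resolvent kernel bounds are genuinely uniform over the non-compact contour arms $\cC_\pm$ where $|\lambda|\to\infty$ (there one gains smallness from $|\lambda|^{-1/2}$ decay, so this end is easy) --- is the delicate bookkeeping step; the rest is standard Green's-function estimation.
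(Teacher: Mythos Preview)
Your approach differs substantially from the paper's, which is much more direct. The paper simply asserts that the Green's function $G_{22}(x,y)$ of $\tL_{22}-\lambda$ decays exponentially in $|x-y|$ at an $O(1)$ rate uniformly for $\lambda$ to the right of $\cC$ (since such $\lambda$ lie an $O(1)$ distance from the essential spectrum $(-\infty,-1]$ of $\tL_{22}$), and then applies a pointwise kernel estimate: writing $\tilde g_j=\int G_{22}(x,y)\chi_j(y)f(y)\,dy$, the polynomial weight passes through because $(1+|x-p_j|^\beta)\leq C(1+|x-y|^\beta)(1+|y-p_j|^\beta)$ and the exponential kernel absorbs the first factor. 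No decomposition into single-pulse operators is invoked. Your route via translated copies $L_{22}^{(j)}$ and Jost solutions is sound and more explicit about the Green's function structure, but the reassembly step (comparing $u$ with $\sum_j v_j$ and bootstrapping the $O(\eps^r)$ remainder) introduces bookkeeping that the paper sidesteps entirely by working with the full operator's kernel from the start.

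Your identification of the obstacle is correct and, notably, the paper's own proof glosses over it too: the exponential \emph{decay rate} of $G_{22}$ is indeed uniform (governed by distance to the essential spectrum), but its \emph{amplitude} has poles at the near-zero eigenvalues $\sigma_0(\vp)\subset\sigma_p(\tL_{22})$, which lie $O(\eps^r)$ from the origin and hence only $O(\eps^\alpha)$ from $\cC$. So $\|G_{22}\|_{L^1}$ is not literally uniform over all $\lambda$ to the right of $\cC$ as the paper's proof asserts. The resolution, which you correctly surmise among your three options, is that in every application of this lemma (Proposition~\ref{prop-resolv}) the input is drawn from $X_{\vp}$: the relevant second components are, up to $O(\eps)$, orthogonal to the near-kernel $\{\phi_j'\}$ of $\tL_{22}$, and terms like $J_{21}g_1$ have approximate odd parity against $\phi_j'$ (cf.\ the argument in Lemma~\ref{L:Psi-dag}). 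You would be justified in reading the lemma as tacitly restricted to this complementary subspace; once you insert that hypothesis, your sketch closes without further difficulty, and indeed your flagging of this point is more careful than the paper's treatment.
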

 \begin{proof}
 The Green's function $G_{22}$ for $\tL_{22}-\lambda$ decays at an $O(1)$ exponential rate uniformly for $\lambda$ to the right of $\cC$
 since $\lambda$ is an $O(1)$ distance from the essential spectrum of $\tL_{22}$. The rate is also uniform in $\vp$ since
 the $N$-pulses $\{\phi_j\}_{j=1}^N$ decay exponentially outside $O(1)$ intervals. 
 Introducing $\tilde g=G_{22}*f$, we may decompose it as $\tilde g  =  \sum \tilde g_j$, where $\tilde g_j=G_{22}*\chi_j f$. From classic convolution estimates we have
 \beq \| | (1+|x-p_j|^\beta) \tilde g_j \|_{L^1} \leq C \| G_{22}\|_{L^1} \| (1+|x-p_j|^\beta)\chi_j f \|_{L^1} \leq C \|f\|_{L^1_{\beta,\vp}}. \eeq
 The derivation of the $L^2$ bound on $\tilde g^\prime$ is similar. \hfill \end{proof} 

Using the bounds on the resolvents of the diagonal elements of $\tL_\vp$, we establish the following result.

\begin{proposition} Fix the pulse-pulse separation distance $\ell>0$ so large that the localized tail-tail 
interaction is $O(\eps^r)$ for $r=r(\ell)\geq 2.$ Then there exists $\nu_0 > 0$ such that for all  $\vec p \in \mathcal K,$ 
\be
\sigma_p(\tilde{L}_{\vp}) \cap \{\Re(\lambda) > -\nu_0 \} = \sigma_0(\vp\,) \cup \sigma_{ss}(\vec p\,),
\ee
where $\sigma_0(\vp\,)$ consists of $N$ semi-simple $O(\eps^r)$ eigenvalues which are in $\sigma_p(\tilde{L}_{22})$,
and the semi-strong spectrum is comprised, up to multiplicity, of the solutions to
\be
  \sigma_{ss}(\vp\,) =\left\{\lambda\, \Bigl|\, {\det}(I+\cN_\lambda(\vp\,))=0\right\}, 
 \label{ss-spectrum}
\eeq
where the $N\times N$ matrix $\cN_\lambda(\vp\,)$ is given in \eqref{N}.
The eigenspace associated to $\sigma_0$ is contained, up to $O(\epsilon^r)$, within the space
\be \label{cV-exp}
\cV=span\left\{ (0, \phi_1^\prime)^T, \cdots, ( 0,  \phi_N^\prime)^T \right\}.\\
\ee
\end{proposition}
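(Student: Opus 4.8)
The plan is to exploit the near-triangular structure of $\tL_\vp$ displayed in \eqref{tildeL}, treating the two rank-$N$ operators $J_{11}, J_{12}$ as a finite-rank perturbation of the genuinely block-triangular operator obtained by deleting them. First I would establish the spectral content of that triangular operator. Its spectrum is the union of the point spectrum of $-L_{11}^e$, which by Lemma~\ref{l:L11} (the resolvent bound \eqref{3lemma1}) has no point spectrum to the right of the contour $\cC$ at all (the resolvent is uniformly bounded there), and the point spectrum of $\tL_{22}$. For $\tL_{22}$, which is a decoupled sum (up to exponentially small tail-tail corrections of size $O(\eps^r)$) of $N$ copies of the scalar Schr\"odinger operator $\partial_x^2 - 1 + \alpha_{22}\phi_0^{\alpha_{22}-1}(\cdot-p_j)$, the translational zero eigenfunction $\phi_j'$ of each copy produces, after coupling, $N$ eigenvalues clustered within $O(\eps^r)$ of the origin; all other eigenvalues of $\tL_{22}$ (in particular the ground state $\lambda_0>0$) are an $O(1)$ distance away, and the linear-stability hypothesis (see Remark~\ref{R:theta}) places the relevant portion of $\sigma_p(\tL_{22})$ strictly left of $\cC$. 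This fixes $\nu_0>0$ and identifies $\sigma_0(\vp)$ together with its approximate eigenspace $\cV$ in \eqref{cV-exp}.

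Next I would account for the rank-$N$ perturbation $-\eps^{-1}(J_{11}, J_{12}; 0, 0)$. Since this perturbation acts only in the first component and has range in $\mathrm{span}\{\xi_j\}$, I would use a Schur-complement / Lyapunov--Schmidt reduction on the resolvent equation $(\tL_\vp - \lambda)(u,v)^T = (f,g)^T$. Solving the second row gives $v = (\tL_{22}-\lambda)^{-1}(g - J_{21}u)$, and substituting into the first row yields a closed equation for $u$ of the form $(L_{11}^e + \lambda)u = -\eps^{-1}(J_{11}u + J_{12}v) - f$. The key move is that $J_{11}u + J_{12}v$ depends on $u,v$ only through the masses $\otimes\vchi^{\,T}\cdot(\Phi_2^{\alpha_{12}}\Phi_1^{\alpha_{11}-1}u)$ etc., so after applying $(L_{11}^e+\lambda)^{-1}$ and using the finite-rank-reduction Lemma (estimate \eqref{residualmatrix}) together with the small-mass bounds \eqref{5lemma1}--\eqref{6lemma1}, the equation for $u$ collapses, modulo errors controlled by $\eps^2$ times norms, onto an $N$-dimensional linear system whose matrix is precisely $I + \cN_\lambda(\vp)$ with $\cN_\lambda$ assembled from the two-point correlation matrix $G_N(\vp,\lambda)$ of \eqref{unscaled-G}, the masses $\overline{\phi_0^{\alpha_{12}}}$, $\overline{\phi_0^{\alpha_{22}+1}}$, and $(\tL_{22}-\lambda)^{-1}$ acting on the $\phi_j^{\alpha_{22}}$. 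Invertibility of this $N\times N$ matrix is then equivalent, to leading order, to $\lambda\notin\sigma_p(\tL_\vp)$, giving \eqref{ss-spectrum}; the condition \eqref{Nlam-est} in Definition~\ref{def-one} is exactly what guarantees the reduction is uniform on $\cC$.

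Finally I would assemble the pieces: a point $\lambda$ to the right of $-\nu_0$ is in $\sigma_p(\tL_\vp)$ iff the reduced resolvent equation fails to be solvable, which happens iff either $\lambda$ is one of the $O(\eps^r)$ eigenvalues inherited from $\tL_{22}$ (the perturbation $J_{21}$ couples the near-kernel but, because $(\Phi_1^{\alpha_{11}-1}-q_j^{\alpha_{11}-1})$ vanishes at $p_j$ and $\phi_j'$ is odd about $p_j$, the leading contribution of $J_{11},J_{12}$ against $\xi_j$ is higher order, so these eigenvalues stay $O(\eps^r)$-close to the origin and semi-simple), or $\det(I+\cN_\lambda(\vp))=0$. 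The eigenspace statement \eqref{cV-exp} follows because the recovered eigenfunction has $v$-component $O(\eps^r)$-close to a linear combination of the $\phi_j'$ and $u$-component $O(\eps)$ small by the mass estimates. I expect the main obstacle to be making the Lyapunov--Schmidt reduction genuinely uniform in $\lambda$ along the full contour $\cC$ — in particular controlling $(L_{11}^e+\lambda)^{-1}$ near the branch point $\lambda = -\eps^\alpha\mu$ where $k_\lambda\to 0$ and the Green's function \eqref{Ggreen} degenerates — which is precisely why the small-mass estimates \eqref{5lemma1}--\eqref{6lemma1} and the bound \eqref{Nlam-est} on $(I+\cN_\lambda)^{-1}$ are needed, and why the argument must be run in the weighted windowed norms rather than in plain $L^1$ or $L^2$.
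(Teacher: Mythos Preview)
Your overall strategy---Schur-complement reduction using the finite-rank structure of $J_{11},J_{12}$ to collapse the eigenvalue problem onto the $N\times N$ system $(I+\cN_\lambda)\vec v=0$---is exactly the paper's approach, and the bookkeeping you outline (solve the $\tL_{22}$ row for $v$, substitute, act with the finite-rank tensor) matches the derivation of \eqref{Psi1}--\eqref{N}.

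There is, however, a genuine gap in how you dispose of the ground-state eigenvalues of $\tL_{22}$. You write that ``the linear-stability hypothesis (see Remark~\ref{R:theta}) places the relevant portion of $\sigma_p(\tL_{22})$ strictly left of $\cC$,'' but this proposition carries no stability hypothesis, and more importantly $\tL_{22}$ \emph{always} has $N$ eigenvalues clustered near the positive ground state $\lambda_0>0$ of $L_0$; no assumption on the full system can move these, since $\tL_{22}$ is a fixed Schr\"odinger operator independent of the coupling. Your Schur reduction inverts $\tL_{22}-\lambda$ and therefore breaks down precisely at these points, so you must argue separately that they do not produce eigenvalues of $\tL_\vp$ outside $\sigma_{ss}$. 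The mechanism in the paper is this: at $\lambda\in\sigma_p(\tL_{22})$, either \eqref{evp2} is solvable (and one is returned to the semi-strong condition, so any eigenvalue there is already captured by $\sigma_{ss}$), or one must have $\Psi_1=0$, which by \eqref{evp1} forces $J_{12}\Psi_2=0$. For the near-zero cluster this constraint is satisfiable because the $\phi_k'$ are locally odd---this is what produces $\sigma_0$ and the eigenspace $\cV$. For the cluster near $\lambda_0$, however, the eigenfunctions are linear combinations of translates of the \emph{positive} ground state $\psi_0$, and the positivity of $\psi_0$ together with that of the weights in $J_{12}$ forces $J_{12}\Psi_{2k}=0$ to have only the trivial solution. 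It is this positivity argument, not any linear-stability assumption, that eliminates the ground-state cluster as a separate contribution to $\sigma_p(\tL_\vp)$; without it your dichotomy in the final paragraph is incomplete.
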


\begin{proof}
 The eigenvalue problem for $\tL_{\vp}$ takes the form 
\begin{align}
\label{evp1} -(L_{11}^e+\lambda)\Psi_1=& \epsilon^{-1}(J_{11}\Psi_1+J_{12}\Psi_2), \\
\label{evp2} (\tilde{L}_{22}-\lambda)\Psi_2=& -J_{21}\Psi_1.
\end{align}
Assume that $\lambda$ is outside of the essential spectrum, $B$, and the point spectrum of $\tL_{22}$.
Then we may invert $\tilde{L}_{22}-\lambda$ in the second equation and eliminate $\Psi_2$ from the first equation.
Inverting $L_{11}^e+\lambda$, we arrive at the scalar equation,
\be
\Psi_1=- \epsilon^{-1}(L_{11}^e+\lambda)^{-1}\left(J_{11}-J_{12}(\tilde{L}_{22}-\lambda)^{-1}J_{21}\right)\Psi_1.
\ee
Recalling \eqref{J11}-\eqref{J12} we rewrite the right hand side in terms of a single, finite-rank operator,  
\be
\label{Psi1} \Psi_1= -\epsilon^{-1}(L_{11}^e+\lambda)^{-1}J_l^{T} \otimes J_r \cdot \Psi_1,
\ee
where the left and right components of the tensor product are
\begin{align}
\label{J_l} J_l =& \vec{\xi}, \\
\label{J_r} J_r =& (\alpha_{11}\Phi_2^{\alpha_{12}}\Phi_1^{\alpha_{11}-1}-\alpha_{12}\Phi_2^{\alpha_{12}-1}(\tilde{L}_{22}-\lambda)^{-1}J_{21}\Phi_1^{\alpha_{11}})\vec{\chi}.
\end{align}
The eigenfunctions outside of $\sigma_p(\tL_{22})$ reside inside an $N$-dimensional space. To resolve \eqref{Psi1} we act on it  
with $\otimes J_r$
\be
\otimes J_r \cdot \Psi_1 =-\epsilon^{-1}\otimes J_r \cdot (L_{11}^e+\lambda)^{-1}J_l^{T} \otimes J_r \cdot \Psi_1.
\ee
Introducing the matrix
\be
\label{N} \cN_{\lambda}(\vec{p})=\epsilon^{-1}\otimes  J_r \cdot (L_{11}^e+\lambda)^{-1}J_l^{T},
\ee
we see the search for semi-strong point spectrum reduces to solving the matrix equation
\be
(I+\cN_{\lambda})\otimes J_r \cdot \Psi_1=0.
\ee

If  $\lambda\notin\sigma_p(\tL_{22})$ and $I+\cN_{\lambda}$ is invertible, then $\otimes J_r \cdot \Psi_1=0$. This combined with \eqref{Psi1} 
implies $\Psi_1=0$, and from \eqref{evp2}, we see that $\Psi_2=0$.  
Conversely if for some $\vec{v}\in\bC^N$ we have $(I+\cN_{\lambda})\vec v=0$, then
\beq
\Psi:= \eps^{-1} \begin{pmatrix} - (L_{11}^e+\lambda)^{-1}J_l^{T} \vec v \cr
                                                                         (\tL_{22}-\lambda)^{-1} J_{21} (L_{11}^e+\lambda)^{-1}J_l^{T} \vec v\end{pmatrix},
\ee
is an eigenvector of $\tL_{\vp}.$ That is,  $\lambda \in \mathbb C \backslash \left(B \cup \sigma_p(\tilde L_{22}) \right)$ is an eigenvalue 
of $\tilde L_{\vp}$ if and only if $I+\cN_{\lambda}$ is singular. The statement on multiplicity follows by considering perturbations of $\tL_{\vp}$ which
break any non-simple eigenvalues into collections of simple spectra and standard results on continuity of eigenvalues. 

To address the point spectrum of $\tL_{\vp}$ arising from $\tL_{22}$ we first recall the defining equation, \eqref{phi0-def}, for 
the localized pulse $\phi_0$.  It is natural to introduce the linearization of \eqref{phi0-def},
\beq \label{L0-def}
L_0 = \partial_x^2 -1 +\alpha_{22}\phi_0^{\alpha_{22}-1},
\eeq
about $\phi_0$. This Sturm-Liouville operator has a simple kernel, spanned by $\phi_0^\prime$, and a single positive eigenvalue, $\lambda_0$,
corresponding to a non-zero ground state eigenfunction $\psi_0>0$. The remainder of the spectrum of $L_0$ is 
contained within $(-\infty, -2\nu_0]$ for some $\nu_0>0.$  The classical result, \cite{Jones}, states that, up to multiplicity,  
$\sigma_p(\tL_{22})$ consists of $N$ copies of $\sigma_p(L_0)$ shifted by at most $O(\eps^r),$ where recall $r \geq 2$, with eigenfunctions consisting of 
linear combinations of $\vp$ translates of the corresponding eigenfunction of $L_0.$ 

Assume that $\lambda\in\sigma_p(\tL_{22})\backslash B$; if the equation \eqref{evp2} is solvable, then the process follows the 
steps outlined above, and returns us to  the semi-strong eigenvalue condition.
The novelty lies in the possibility that  \eqref{evp2} has a nontrivial solution with $\Psi_1=0,$  which from \eqref{evp1} requires that $J_{12}\Psi_2=0$. A standard implicit function argument shows that this is possible for the set of eigenvalues $\sigma_0:=\{\lambda_{1k}\}_{k=1}^N$ of $\tL_{22}$ clustered near zero (the translational eigenvalue of $L_0$) since the corresponding eigenfunctions are locally odd about each $\vp.$
The corresponding eigenspace is denoted $\cV.$ For the eigenvalues $\{\lambda_{0,k}\}_{k=1}^N$ of $\tL_{22}$ clustered near the ground-state, $\lambda_0>0$, of $L_0$, then
\beq
  \Psi_{2k} = \sum\limits_{j=1}^N b_{kj} \psi_0(x-p_j) +O (\eps^r),
 \eeq
 and using the positivity of $\psi_0$ and $\chi$, the $N\times N$ linear system $J_{12} \Psi_{2k}=0$ yields only the trivial solution for 
 the coefficients $b_{kj}.$ The rest of the spectrum of $L_0$ is to the left of $-2\nu_0$, and so standard results on the spectrum of well-separated pulses, \cite{Jones}, imply that the remainder of the spectrum of $\tL_{22}$ lies on the real axis, to the left of $-\nu_0.$ \hfill\end{proof}

We now find an explicit representation for the matrix $\cN_\lambda$, and we show the semi-strong spectrum is partially characterized through the meromorphic function
\beq
\label{Rlambda} \mathcal{R}(\lambda) := \left((L_0-\lambda)^{-1}\phi_0^{\alpha_{22}}, \phi_0^{\alpha_{12}-1}\right)_{L^2},
\eeq
which is analytic except for poles at some of the eigenvalues of $L_0$. A similar function was introduced in \cites{IndRD, RGG-M}.

\begin{proposition} \label{prop:4} 
The matrix $\cN_\lambda$ takes the form
\beq
\cN_\lambda = \Frac{E(\lambda)}{\sqrt{\lambda+\eps^\alpha\mu}} \oG_N\!(\vp\,,\lambda)\left[Q(\vp\,)\right]^{\theta-1} +O(\eps) \label{Nmatrix}
\eeq
where
\beq
E(\lambda) = \alpha_{11}\overline{\phi_0^{\alpha_{12}}}- \alpha_{12}\alpha_{21}\mathcal{R}(\lambda), \label{E-def}
\eeq
the scaled two-point correlation function $\oG_N$ is defined in \eqref{Gtwopointmat} and $Q=\diag({\vq})$ is the diagonal matrix of amplitudes.
\end{proposition}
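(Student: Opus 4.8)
The plan is to compute $\cN_\lambda$ directly from its definition \eqref{N} by inserting the finite-rank structure of the tensor $\otimes J_r \cdot (L_{11}^e+\lambda)^{-1} J_l^T$, and then identifying the leading term via the finite-rank reduction Lemma of subsection 3.3. First I would write out the $(k,j)$ entry of $\cN_\lambda$ as $\eps^{-1}\bigl((L_{11}^e+\lambda)^{-1}\xi_j, [J_r]_k\bigr)_{L^2}$, where $[J_r]_k$ is the $k$-th component of the vector $J_r$ from \eqref{J_r}, namely $\chi_k$ times $\alpha_{11}\Phi_2^{\alpha_{12}}\Phi_1^{\alpha_{11}-1} - \alpha_{12}\Phi_2^{\alpha_{12}-1}(\tL_{22}-\lambda)^{-1}J_{21}\Phi_1^{\alpha_{11}}$. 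The key structural fact is that $\xi_j$ has mass one and is supported near $p_j$, so $\otimes\vchi\cdot\xi_j = \vec e_j$, and $[J_r]_k$ is supported (up to exponentially small tails) near $p_k$; applying \eqref{residualmatrix} with $f=\xi_j$ and $g=[J_r]_k$ replaces $\bigl((L_{11}^e+\lambda)^{-1}\xi_j, [J_r]_k\bigr)_{L^2}$ by $[G_N(\vp,\lambda)]_{kj}\cdot\overline{[J_r]_k}$ up to an error of order $\eps^2 \|\xi_j\|_{L^1_{1,\vp}}\|[J_r]_k\|_{L^1_{1,\vp}} = O(\eps^2)$. Since $G_N = \frac{\eps^2}{k_\lambda}\oG_N$ and $k_\lambda = \eps\sqrt{\lambda+\eps^\alpha\mu}$, the prefactor $\eps^{-1}G_N$ becomes $\frac{\eps}{k_\lambda}\oG_N = \frac{1}{\sqrt{\lambda+\eps^\alpha\mu}}\oG_N$, which produces exactly the scalar factor $1/\sqrt{\lambda+\eps^\alpha\mu}$ and the matrix $\oG_N(\vp,\lambda)$ appearing in \eqref{Nmatrix}; the residual $\eps^{-1}\cdot O(\eps^2)=O(\eps)$ is absorbed into the stated error.

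The remaining work is to evaluate the mass $\overline{[J_r]_k}$ and show it equals $E(\lambda)\,q_k^{\theta-1}$ up to $O(\eps)$, which will supply the $E(\lambda)$ factor and the $[Q(\vp)]^{\theta-1}$ matrix. On the support of $\chi_k$ we use that $\Phi_1 = q_k + O(\eps)$ there (by \eqref{mega-bound} and the $L^\infty$ bounds of \eqref{Phi1Linfty}--\eqref{dxPhi1Linfty}), that $\Phi_2 = \phi_k + O(\eps^r)$ up to tail-tail corrections, and that $\phi_k = q_k^{-\alpha_{21}/(\alpha_{22}-1)}\phi_0(\cdot - p_k)$ by \eqref{phij-phi0}. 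The first piece then integrates to $\alpha_{11} q_k^{\alpha_{11}-1} \overline{\phi_k^{\alpha_{12}}} = \alpha_{11}q_k^{\alpha_{11}-1}q_k^{-\alpha_{12}\alpha_{21}/(\alpha_{22}-1)}\overline{\phi_0^{\alpha_{12}}} = \alpha_{11}q_k^{\theta-1}\overline{\phi_0^{\alpha_{12}}}$ by the definition \eqref{theta-def} of $\theta$. For the second piece, $J_{21} = \alpha_{21}\sum_j q_j^{\alpha_{21}-1}\phi_j^{\alpha_{22}}$ localizes near $p_k$ to $\alpha_{21}q_k^{\alpha_{21}-1}\phi_k^{\alpha_{22}}$; after the rescaling $\phi_k\mapsto q_k^{-\alpha_{21}/(\alpha_{22}-1)}\phi_0$ and the corresponding conjugation of $\tL_{22}-\lambda$ (which on the $k$-th window is, up to $O(\eps^r)$, just the translate of $L_0-\lambda$), one recognizes the inner product $\bigl((L_0-\lambda)^{-1}\phi_0^{\alpha_{22}},\phi_0^{\alpha_{12}-1}\bigr)_{L^2} = \cR(\lambda)$, with the powers of $q_k$ again collapsing via \eqref{theta-def} to a single factor $q_k^{\theta-1}$ times $\alpha_{12}\alpha_{21}\cR(\lambda)$. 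Combining the two pieces gives $\overline{[J_r]_k} = \bigl(\alpha_{11}\overline{\phi_0^{\alpha_{12}}} - \alpha_{12}\alpha_{21}\cR(\lambda)\bigr)q_k^{\theta-1} + O(\eps) = E(\lambda)q_k^{\theta-1}+O(\eps)$, as claimed.

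The main obstacle I expect is bookkeeping the error terms uniformly in $\lambda\in\cC$ and $\vp\in\cK$: in particular, controlling $(\tL_{22}-\lambda)^{-1}$ in the right norms so that replacing it by the translated $(L_0-\lambda)^{-1}$ is legitimate with an $O(\eps)$ (or $O(\eps^r)$) error, and checking that the $L^1_{1,\vp}$-norms of $[J_r]_k$ that feed \eqref{residualmatrix} are bounded independently of $\lambda$ despite the resolvent of $\tL_{22}$ appearing inside. The resolvent bound \eqref{tL22-est} and the exponential decay of $G_{22}$ away from the essential spectrum of $\tL_{22}$ (valid since $\cC$ stays an $O(1)$ distance from $\sigma_{ess}(\tL_{22})$) are exactly what is needed here, together with the observation that $\cR(\lambda)$ is analytic off the poles of $(L_0-\lambda)^{-1}$, so the representation \eqref{Nmatrix} is valid for all $\lambda$ on and to the right of $\cC$. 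A secondary subtlety is that the finite-rank reduction Lemma requires $f,g\in L^1_{1,\vp}$, so one must verify that $\xi_j$ and $[J_r]_k$ genuinely lie in that space with controlled norm; this is immediate for $\xi_j$ and follows for $[J_r]_k$ from the exponential localization of $\Phi_2$ and the mapping properties of $(\tL_{22}-\lambda)^{-1}$ on weighted spaces.
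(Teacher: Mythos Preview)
Your approach is essentially the same as the paper's: both compute the entries of $\cN_\lambda$ by applying the finite-rank reduction Lemma \eqref{residualmatrix} to handle $(L_{11}^e+\lambda)^{-1}$ and by replacing $(\tL_{22}-\lambda)^{-1}$ with translates of $(L_0-\lambda)^{-1}$ on the localized windows to recognize $\cR(\lambda)$; the only difference is the order in which you apply these two reductions, which is immaterial. One small slip: your indexing of the $(k,j)$ entry pairs $\xi_j$ with $[J_r]_k$, whereas the definition \eqref{N} gives $[\cN_\lambda]_{ij}=\eps^{-1}\bigl((L_{11}^e+\lambda)^{-1}\xi_i,[J_r]_j\bigr)_{L^2}$, so as written you are computing the transpose and would land on $Q^{\theta-1}\oG_N$ rather than $\oG_N Q^{\theta-1}$; swapping the roles of the two indices fixes this.
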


\begin{proof} The reduction of the formula \eqref{N} for the semi-strong matrix $
\cN_\lambda$ requires the inversion of two second order operators.
The first inversion, for $L_{11}^e+\lambda$, is accomplished by \eqref{residualmatrix}. The second is the inversion of $(\tL_{22}-\lambda).$
This we achieve via the non-local eigenvalue (NLEP) machinery developed in \cite{GS1}.  
For functions that are exponentially localized around the pulse positions $\vp\in\cK$,
the NLEP analysis inverts $\tL_{22}-\lambda$ in a windowed manner using translates of the operator $L_0$,
\beq
L_{0,k} :=\partial_x^2-1 + \alpha_{22}\phi_0^{\alpha_{22}-1}(x-p_k).
\eeq
In particular, since the potential $J_{21}$ is comprised of $N$-pulses localized about the positions $\vp$, the last estimate of \eqref{dxPhi1Linfty} yields
\be 
(\tL_{22}-\lambda)^{-1}J_{21}\Phi_1^{\alpha_{11}} = \alpha_{21}\sum_{k=1}^N q_k^{\alpha_{11}-1-\frac{\alpha_{21}}{\alpha_{22}-1}}(L_{0,k}-\lambda)^{-1}\phi_0^{\alpha_{22}}(x-p_k)+ (\eps^r),
\ee
where the error is in $L^{1,1}_\vp$ for $r \geq 2$.  Using the same estimate we may rewrite the $k'th$ component of $J_r$, defined in \eqref{J_r}, as
\beq
J_{rk}=
 \sum_{k=1}^Nq_k^{\theta-1}\chi_k \left(\alpha_{11}\phi_0^{\alpha_{12}}(x-p_k)- \alpha_{12}\alpha_{21}\phi_0^{\alpha_{12}-1}(x-p_k)\Xi(x-p_k)\right)+(\eps^r), 
\eeq
where the function $\Xi(\lambda):=(L_0-\lambda)^{-1}\phi_0^{\alpha_{22}}$ decays exponentially at a rate proportional to the distance of 
$\lambda$ to $\sigma_{ess}(\tilde{L}_{22})$.  
Turning to \eqref{N}, we address the $ij$ entry of $\cN_{\lambda}$,
\begin{align}
[\cN_\lambda]_{ij}=\hspace*{0.05in}&\epsilon^{-1}\left((L_{11}^e+\lambda)^{-1}J_{li}, J_{rj}\right)_{L^2}, \nonumber\\
\nonumber =\hspace*{0.05in}&\epsilon^{-1}q_j^{\theta-1}\left( (L_{11}^e+\lambda)^{-1}\xi_i, \alpha_{11}\phi_0^{\alpha_{12}}(x-p_j) \right)_{L^2}-\\
&\epsilon^{-1}q_j^{\theta-1}\left((L_{11}^e+\lambda)^{-1}\xi_i,\alpha_{12}\alpha_{21}\phi_0^{\alpha_{12}-1}(x-p_j)\Xi(x-p_j) \right)_{L^2} +O(\eps). \nonumber
\end{align}
To invert $L_{11}^e+\lambda$ we apply \eqref{residualmatrix} to find 
\begin{align}
 [\cN_\lambda]_{ij}=\hspace*{0.05in} & \epsilon^{-1}q_j^{\theta-1}\otimes \vec{\chi} \cdot \left( \alpha_{11}\phi_0^{\alpha_{12}}(x-p_j)\right) G^N_{ij} \otimes \vec{\chi} \cdot \xi_i - \nonumber\\
&  \epsilon^{-1}q_j^{\theta-1}\otimes \vec{\chi} \cdot \left( \alpha_{12}\alpha_{21}\phi_0^{\alpha_{12}-1}(x-p_j)\Xi(x-p_j)\right) G^N_{ij} \otimes \vec{\chi} \cdot \xi_i+O(\eps). \nonumber
\end{align}
Recalling the scaled, two-point correlation matrix $\oG_N$, from \eqref{Gtwopointmat}, and the meromorphic function $\cR$ introduced
in \eqref{Rlambda}, we may represent $\cN_{\lambda}$ as in \eqref{Nmatrix}.
\hfill \end{proof}

We are now in a position to prove the Admissibility Proposition stated in the Introduction.\\
\noindent{\bf Proof of Admissibility:}
We denote the eigenvalues of $\cN_\lambda$ by $\{\mu_j^{(N)}(\lambda,\vp\,)\}_{j=1}^N.$ The semi-strong spectrum is precisely
the set of $\lambda$ for which $\mu_j^{(N)}(\lambda,\vp\,)=-1$ for some $j=1, \cdots, N.$
For the case of a single pulse, $N=1$, the matrix $\cN_\lambda$ is a scalar and
\beq \label{mu1-def}
\mu^{(1)}_1(\lambda) = \frac{\eps}{k_\lambda}E(\lambda)q_\infty^{\theta-1}+O(\eps),
\eeq
where we have introduced the constant
\vspace {-0.1in}
$$q_\infty= \left(\frac{\rho}{\overline{\phi_0^{\alpha_{12}}}\sqrt{\mu}}\right)^{\dfrac{1}{\theta}},$$
corresponding to the amplitude of a single pulse.  Recalling the operator \eqref{L0-def} and the pulse equation \eqref{phi0-def}, we find the identity
\beq
L_0 \phi_0 = \phi_0''-\phi_0+\phi_0^{\alpha_{22}}+(\alpha_{22}-1) \phi_0^{\alpha_{22}} = (\alpha_{22}-1)\phi_0^{\alpha_{22}},
\eeq
which permits us to evaluate the meromorphic function $\cR$ at $\lambda=0$,
\beq
\cR(0) =\left(L_0^{-1}\phi_0^{\alpha_{22}},\phi_0^{\alpha_{12}-1}\right)_{L^2}= \frac{\overline{\phi_0^{\alpha_{12}}}}{\alpha_{22}-1}.
\eeq
In particular, $E(0)= \theta \overline{\phi_0^{\alpha_{12}}}\neq0$ if $\theta\neq0.$ Moreover for $|\lambda|$ sufficiently far from the essential
spectrum, $\cR(\lambda)$ tends to zero so that $E(\lambda)\to \alpha_{11}\overline{\phi_0^{\alpha_{12}}}$, while $|k_\lambda|$ grows, so that $\mu_1^{(1)}(\lambda)$, given in \eqref{mu1-def}, 
tends to zero in this limit. As a consequence,  the assumption that the one-pulse is spectrally stable implies there exists $\nu\in(0,\mu)$ and 
$b>0$ sufficiently large such that $\mu_1^{(1)}$ is uniformly bounded away from $-1$ on the contour $\cC_{\nu,b}.$ Moreover, since $E(0)\neq0$ and
$E$ varies at an $O(1)$ rate in $\lambda$ away from its poles, there exists $C>0$ such that
\beq\label{one-pulse-est}
   \left(1+\mu_1^{(1)}(\lambda)\right)^{-1} \leq C \left(1+\frac{\eps}{|k_\lambda|}\right)^{-1},
 \eeq 
for all $\lambda \in \cC.$ 

Consider an $N$-pulse and a set $\cK$ of pulse positions satisfying \eqref{cK-delta} for some $\delta>0$. Fix a contour $\cC$
for which \eqref{one-pulse-est} holds and consider  $\lambda$ on and to the right of the contour. If in addition $\lambda$ is
sufficiently far from the branch point $-\eps^\alpha\mu$ so that $|\lambda+\eps^\alpha\mu|>\eps^\gamma$, then 
$\Re (k_\lambda) \Delta p_i\geq C \eps^{\gamma-\alpha}$. Indeed, for these $\lambda$ the definition \eqref{Gtwopointmat} and the formula
\eqref{q0-exp} yield the estimate 
\beq\label{delta-est}
\oG_N(\lambda,\vp) Q^{\theta-1} = q_\infty^{\theta-1} I_{N\times N}+O\!\left(\delta^{-1}, \exp\left[-\delta \eps^{\frac{\gamma-\alpha}{2}}\right]\right),
\eeq
and from \eqref{Nmatrix} we see that the matrix $\cN_\lambda$ approximately diagonal with eigenvalues $\mu^{(N)}_j(\vp,\lambda)=\mu_1^{(1)}(\lambda)+O(\delta^{-1})$
for $j=1, \cdots, N$. It is here that we
require $\delta$ sufficiently large, independent of $\eps$, so that none of the eigenvalues $\mu_j^{(N)}$ attains the value $-1$  for $\lambda$ on this set.  
This restriction on the strength of the semi-strong interaction precludes the point spectrum crossing the imaginary axis 
away from the origin, thereby inducing oscillatory instabilities, as is known to happen \cites{DGK-98, DGK-02} for $N$ pulse configurations with pulses that are
too close. 

On the other hand,
if  $|\lambda+\eps^\alpha\mu|\leq  \eps^\gamma$, then $E(\lambda)$ is close to $E(0)$, and is uniformly bounded away from zero.
Moreover, one can show from an inductive proof that
$$\det \oG_N = \prod\limits_{i=1}^{N-1}\left(1-e^{2k_\lambda\Delta p_i}\right)\geq C\prod\limits_{i=1}^{N-1}\Frac{|k_\lambda|\Delta p_i}{1+|k_\lambda|\Delta p_i}\geq C\delta^N,$$
since $\lambda$ is close to the branch point, and $\Delta p_i \geq \delta \eps^{-(1+\alpha/2)}.$ Similarly, $Q$ is diagonal, with positive entries that are uniformly bounded away from zero and infinity, thus the determinant of $Q^{\theta-1}$ is bounded away from zero. 
Moreover, the eigenvalues of $\oG_N Q^{\theta-1}$ are uniformly bounded from above, while we have shown that their product is 
uniformly bounded from below in modulus; it follows that each eigenvalue is uniformly bounded from below in modulus. 
We combine these facts into the estimate on the eigenvalues of $\cN_\lambda$,
\beq
 |\mu_j^{(N)}|\geq C \frac{\eps}{|k_\lambda|}\gg 1, 
 \eeq
 for $j=1, \ldots, N$ for all $\lambda$ satisfying $|\lambda+\eps^\alpha\mu|\leq  \eps^\gamma$ which are on and to the right of $\cC.$ Combining
 the two types of estimates we see that each $\mu_j^{(N)}$ is uniformly bounded away from $-1$ for all $\lambda$ on and to the right of $\cC$
 and for all $\vp\in\cK$. Moreover, the matrix  $I+\cN_\lambda$ is uniformly invertible on these sets, in particular it satisfies \eqref{Nlam-est},
 which establishes the admissibility of $\cK.$
 
We remark that since $\phi_0$ and $\Psi_0$, the ground state eigenfunction of $L_0$, are both positive functions, it follows that
 $E(\lambda)\to -\infty$ as $\lambda\to\lambda_0$ from the right along the real axis, where $\lambda_0$ is the ground-state
eigenvalue. In particular, if $E(0)>0$ then it must be that $\mu_1^{(1)}(\lambda)$ attains the value $-1$ on $(0,\lambda_0)\subset\bR$ and
hence there is a semi-strong eigenvalue on that segment. Thus the assumption that the one-pulse is linearly stable requires
that $\theta<0.$  \hfill $\Box$

\subsection{The Spectral Projection and Adjoint Eigenfunction Asymptotics}
It is essential to our analysis that we control the projection onto  the $N$-dimensional eigenspace of $\tilde{L}$
associated to the eigenvalues $\sigma_0$ that are algebraically close to the origin.
The spectral projection takes the form
\be
\pi_{\vec{p}}\vec{U} \equiv \sum_{j=1}^N\dfrac{(\vec{U},\Psi_j^{\dagger})}{(\Psi_j,\Psi_j^{\dagger})}\Psi_j,\label{pi-def}
\ee
where $\cV:=\{\Psi_j\}_{j=1}^N$ and $\cV^\dag:=\{\Psi_j^\dag\}_{j=1}^N$ are bi-orthogonal bases for the eigenspaces of $\tilde{L}$
and its adjoint respectively. The complementary projection is given by 
\be
\label{tildepi} \tilde{\pi}_{\vec{p}}\vec{U} \equiv \vec{U}-\pi_{\vec{p}}\vec{U}.
\ee
The basis elements of $\cV$ satisfy the expansion (\ref{cV-exp}); in the lemma below we develop asymptotic expansions for 
the elements of $\cV^\dag.$
 
\begin{lemma}\label{L:Psi-dag} Let $\cK$ be a collection of admissible pulse configurations. 
Then there exists $C>0$, independent of $\eps>0$ and $\vp\in\cK$, such that the basis elements of the adjoint eigenspace $\cV^\dag$ satisfy 
\be
\label{adjointest} \|\Psi_{1,k}^{\dagger}\|_{W^{1,1}_{\xi}}+\epsilon^{1-\alpha/2}\|\Psi_{2,k}^{\dagger}-\phi_k'\|_{H^1} \leq C\eps^2,
\ee
for $k=1, \cdots, N.$
\end{lemma}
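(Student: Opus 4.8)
## Proof proposal for Lemma \ref{L:Psi-dag}

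\textbf{The plan is to} obtain the adjoint eigenfunctions by analyzing the eigenvalue problem for $\tL_\vp^*$ in exactly the same triangular fashion used in the proof of the spectral characterization, but with the roles of the two components reversed. Writing $\tL_\vp^* = \begin{pmatrix} -L_{11}^e - \eps^{-1}J_{11}^* & J_{21}^* \\ -\eps^{-1}J_{12}^* & \tL_{22} \end{pmatrix}$, and noting that $\tL_{22}$ is self-adjoint, the adjoint eigenvalue problem associated to an eigenvalue $\lambda_{1,k}\in\sigma_0$ (which is $O(\eps^r)$ and lies within the cluster near the origin of $\sigma_p(\tL_{22})$) reads
\begin{align}
-(L_{11}^e+\lambda_{1,k})\Psi_{1,k}^\dag - \eps^{-1}J_{11}^*\Psi_{1,k}^\dag &= -J_{21}^*\Psi_{2,k}^\dag, \nonumber\\
(\tL_{22}-\lambda_{1,k})\Psi_{2,k}^\dag &= \eps^{-1}J_{12}^*\Psi_{1,k}^\dag. \nonumber
\end{align}
First I would work on the $\Psi_1^\dag$ component. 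Since $J_{11}$ and $J_{12}$ are finite-rank with range spanned by the smooth localized functions $\xi_i$, their adjoints $J_{11}^*, J_{12}^*$ map into span$\{\Phi_2^{\alpha_{12}-1}\Phi_1^{\alpha_{11}-\cdot}\chi_i\}$ paired against $\Psi^\dag$; the key point is that $\Psi_{1,k}^\dag$ solves an equation of the form $(L_{11}^e+\lambda_{1,k})\Psi_{1,k}^\dag = (\text{finite-rank source})$, so that $\Psi_{1,k}^\dag = (L_{11}^e+\lambda_{1,k})^{-1}(\cdots)$. Applying the resolvent estimate \eqref{1lemma1} of Lemma \ref{l:L11} with $\lambda = \lambda_{1,k}$ (so $k_\lambda = \eps\sqrt{\lambda_{1,k}+\eps^\alpha\mu} \sim \eps^{1+\alpha/2}\sqrt{\mu}$, whence $\eps^2/\Re(k_\lambda) \sim \eps^{1-\alpha/2}$), together with the $O(\eps^{-\alpha/2})$ bounds on $\Phi_1$ from \eqref{Phi1Linfty} and the fact that the source has $O(1)$ mass and $W^{1,1}_\xi$-norm, yields $\|\Psi_{1,k}^\dag\|_{W^{1,1}_\xi} \leq C\eps^{1-\alpha/2}\cdot(\text{coefficient})$. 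The claimed $O(\eps^2)$ bound then forces the coefficient in front to be $O(\eps^{1+\alpha/2})$; I expect this smallness to come from the normalization: $\Psi_{1,k}^\dag$ is slaved to $\Psi_{2,k}^\dag$ through $\eps^{-1}J_{12}^*$, but $\Psi_{2,k}^\dag \approx \phi_k'$ is \emph{odd} about $p_k$, and $J_{12}^*$ pairs it against the \emph{even} functions $\Phi_2^{\alpha_{12}-1}\Phi_1^{\alpha_{11}}\chi_k$, producing a gain of $O(\eps)$ from the near-vanishing of this inner product (exactly parallel to the $\partial_x\Phi_1$ being $O(\eps)$, used throughout Section 3). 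Combined with the extra $\eps^{-1}$ this gives an $O(1)$ source but with the resolvent gain $\eps^{1-\alpha/2}$ and a further $\eps^{1/2+\alpha/2}$ from refining the parity cancellation one should reach $O(\eps^2)$; I would track these factors carefully.

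\textbf{Next}, for the second component I would substitute the now-controlled $\Psi_{1,k}^\dag$ back into $(\tL_{22}-\lambda_{1,k})\Psi_{2,k}^\dag = \eps^{-1}J_{12}^*\Psi_{1,k}^\dag$. Since $\lambda_{1,k}$ is $O(\eps^r)$-close to the translational eigenvalue of each $L_{0,j}$, the operator $\tL_{22}-\lambda_{1,k}$ is \emph{not} boundedly invertible; rather one uses the classical well-separated-pulse theory (\cite{Jones}) to write $\Psi_{2,k}^\dag = \sum_j c_{kj}\phi_j' + (\text{correction})$, where the coefficients $c_{kj}$ are determined by a solvability (Lyapunov--Schmidt) condition against the approximate kernel $\{\phi_j'\}$, and the correction is controlled by the reduced resolvent of $\tL_{22}$ on the complement of its near-null space, which \emph{is} bounded $L^1_{\beta,\vp}\to H^1_{\beta,\vp}$ by the Lemma preceding Proposition \ref{prop:4} (applied on a contour encircling $\sigma_0$, or via the spectral projection off $\cV$). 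The source $\eps^{-1}J_{12}^*\Psi_{1,k}^\dag$ has $L^1_{\beta,\vp}$-norm $\lesssim \eps^{-1}\|\Psi_{1,k}^\dag\|_{W^{1,1}_\xi}\cdot\|\Phi_2^{\alpha_{12}-1}\Phi_1^{\alpha_{11}}\vchi\|\lesssim \eps^{-1}\cdot\eps^2\cdot\eps^{-\alpha/2} = \eps^{1-\alpha/2}$, so the correction term is $O(\eps^{1-\alpha/2})$ in $H^1$, matching the weight $\eps^{1-\alpha/2}$ in \eqref{adjointest}. The leading coefficients must satisfy $c_{kj} = \delta_{kj} + O(\eps^r)$ after normalizing the biorthogonal pairing $(\Psi_j,\Psi_j^\dag)$; the off-diagonal $c_{kj}$, $j\neq k$, are $O(\eps^r)$ by exponential pulse separation and hence negligible, which pins down $\Psi_{2,k}^\dag = \phi_k' + O(\eps^{1-\alpha/2})$ in $H^1$.

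\textbf{The main obstacle} I anticipate is the bookkeeping of the parity cancellations needed to push $\|\Psi_{1,k}^\dag\|_{W^{1,1}_\xi}$ all the way down to $O(\eps^2)$ rather than the naive $O(\eps^{1-\alpha/2})$ that the resolvent estimate gives directly: one must exploit that both the source term and the relevant profiles are even/odd about each $p_j$ up to $O(\eps)$-corrections (as quantified in \eqref{dxPhi1Linfty}), and iterate this once more to harvest a second power. A secondary technical point is confirming that the Lyapunov--Schmidt reduction on $\tL_{22}$ is legitimate \emph{uniformly} in $\vp\in\cK$ and that the reduced resolvent bound of the preceding Lemma applies on the relevant (small) contour around $\sigma_0$; this follows from the $O(1)$ spectral gap $\nu_0$ separating $\sigma_0$ from the rest of $\sigma_p(\tL_{22})$ established in the preceding Proposition, but the weighted-space bounds require the partition-of-unity windowing argument to be re-run for $\tL_{22}^{-1}$ restricted away from its near-kernel, which is routine but must be stated. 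Finally I would verify that the biorthogonality normalization $(\Psi_j,\Psi_j^\dag)_{L^2}$ appearing in \eqref{pi-def} is bounded away from zero uniformly, which is immediate from $\Psi_j \approx (0,\phi_j')^T$, $\Psi_j^\dag \approx (0,\phi_j')^T$ and $\|\phi_0'\|_{L^2}^2 \neq 0$. \QED
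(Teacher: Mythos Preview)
Your overall architecture---triangular reduction of the adjoint system, parity as the source of extra smallness, Lyapunov--Schmidt on $\tL_{22}$---matches the paper. But the mechanism you propose for reaching the $O(\eps^2)$ bound on $\Psi_{1,k}^\dag$ is misplaced and would not close as written.

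The paper first \emph{normalizes} the second component by setting $\Psi_{2,k}^\dag=\phi_k'+\eps^{-1}\tL_{22}^{-1}\tpi_{22}J_{12}^\dag\Psi_{1,k}^\dag$, and then substitutes into the first equation to obtain a closed formula for $\Psi_{1,k}^\dag$ whose source term is $L_{11}^{-e}J_{21}\phi_k'$. The parity cancellation lives \emph{here}, in $J_{21}\phi_k'$: since $J_{21}=\alpha_{21}\sum_j q_j^{\alpha_{21}-1}\phi_j^{\alpha_{22}}$ is multiplication by an even function and $\phi_k'$ is odd about $p_k$, the product $J_{21}\phi_k'=\alpha_{21}q_k^{\alpha_{21}-1}\phi_k^{\alpha_{22}}\phi_k'+O(\eps^r)$ has $O(\eps^r)$ windowed mass. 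This is precisely the hypothesis of the small-mass estimate \eqref{5lemma1}, which at $\lambda=0$ (so $|k_0|=\Re(k_0)\sim\eps^{1+\alpha/2}$) gives
\[
\|L_{11}^{-e}J_{21}\phi_k'\|_{W^{1,1}_\xi}\leq C\frac{\eps^2}{\Re(k_0)}\Bigl(|\otimes\vchi\cdot J_{21}\phi_k'|+|k_0|\,\|J_{21}\phi_k'\|_{L^1_{1,\vp}}\Bigr)\leq C\eps^{1-\alpha/2}\bigl(\eps^r+\eps^{1+\alpha/2}\bigr)\leq C\eps^2
\]
in one step. The remaining factor $[I+\eps^{-1}L_{11}^{-e}J_r^{\dag T}(I+\cN_0^\dag)^{-1}\otimes J_l^\dag\cdot\,]$ is then shown to be $O(1)$ in $W^{1,1}_\xi$ using admissibility of $\cK$ and \eqref{1lemma1}.

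By contrast, you locate the parity gain in ``$J_{12}^*$ pairing $\phi_k'$ against even functions''. But $J_{12}^*$ sits in the \emph{second} adjoint equation and feeds $\Psi_{1,k}^\dag$ into $\Psi_{2,k}^\dag$, not the other way around; the coupling from $\Psi_{2,k}^\dag$ into the first equation is through $J_{21}$, which is a multiplication operator (so $J_{21}^*=J_{21}$), not a finite-rank pairing. More importantly, you invoke only the ordinary resolvent bound \eqref{1lemma1}, which yields at best $O(\eps^{1-\alpha/2})$, and then appeal to ``a further $\eps^{1/2+\alpha/2}$ from refining the parity cancellation''. That second factor has no clear origin: once you have inverted $L_{11}^e$ the parity information is gone unless you captured it through \eqref{5lemma1} at the inversion step. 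The fix is simply to recognize that the odd source $J_{21}\phi_k'$ has vanishing windowed mass and to apply \eqref{5lemma1} instead of \eqref{1lemma1}; no iteration is needed. Your treatment of $\Psi_{2,k}^\dag-\phi_k'$ via the reduced resolvent of $\tL_{22}$ is then essentially what the paper does.
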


Proof:  The adjoint operator is given by:
\be
\tilde{L}^{\dagger} =  \left( \ba{cc}  -L_{11}^e  & J_{21} \\  &  \\ 0 & \tilde{L}_{22}  \ea \right) - \eps^{-1}\left( \ba{cc}  J_{11}^{\dagger} & 0 \\  &  \\ J_{12}^{\dagger} & 0 \ea \right),
\ee
where
\begin{align}
J_{11}^{\dagger}=& \alpha_{11}\vec{\chi}\hspace{.02in}^T \Phi_2^{\alpha_{12}}\Phi_1^{\alpha_{11}-1}\otimes \vec{\xi},\\
J_{12}^{\dagger}=& \alpha_{12}\vec{\chi}\hspace{.02in}^T \Phi_2^{\alpha_{12}-1}\Phi_1^{\alpha_{11}}\otimes \vec{\xi}.
\end{align}
For the algebraically small eigenvalues $\lambda\in \sigma_0$, the operator $\tL_{22}$ is almost singular with
kernel approximately spanned by $\{\phi_k^\prime\}_{k=1}^N$. This permits us to 
normalize the second component of the $k'th$ basis elements as
\beq
\Psi_{2,k}^{\dagger}:= \eps^{-1}\tL_{22}^{-1}J_{12}^{\dagger}\Psi_{1,k}^{\dag}= \phi_k^\prime  +\eps^{-1}\tL_{22}^{-1}\tpi_{22} J_{12}^{\dag}\Psi_{1,k}^{\dag},\label{psi2dag}
\eeq
where $\tpi_{22}$ is the projection off of the small eigenspace of the self-adjoint operator $\tL_{22}.$ Using this form
for $\Psi_{2,k}^\dag$ and proceeding as in Proposition\,\ref{prop:4},  we solve for the first component of the adjoint eigenvector
\beq
\label{psi1dag}
\Psi_{1,k}^{\dagger}= [I+\epsilon^{-1}L_{11}^{-e}J_r^{\dagger T} (I+\cN_0^{\dagger})^{-1}\otimes J_l^{\dagger}\cdot]L_{11}^{-e}J_{21} \phi_k',
\eeq
where $J_l^{\dagger}=J_l=\vec{\xi}$,
\be
J_r^{\dagger}:=\left(\alpha_{11}\Phi_2^{\alpha_{12}}\Phi_1^{\alpha_{11}-1}-\alpha_{12}J_{21} \tL_{22}^{-1}\tpi_{22}\Phi_2^{\alpha_{12}-1}\Phi_1^{\alpha_{11}}\right) \vec{\chi},
\eeq
and the $N\times N$ matrix $\cN_0^\dag$ is given by
\be
\cN_0^{\dag}=\epsilon^{-1}\otimes J_l^{\dagger}\cdot L_{11}^{-e}J_r^{\dag T}.
\ee

It remains to bound the second term in (\ref{psi2dag}) and the whole right-hand side of (\ref{psi1dag}). We first address the latter, where employing the bound \eqref{5lemma1}, we find
\be
\|L_{11}^{-e}J_{21} \phi_k'\|_{W^{1,1}_{\xi}} \leq C\left( \epsilon|\otimes \vec{\chi} \cdot J_{21} \phi_k'| +\epsilon^2\|J_{21} \phi_k'\|_{L^1_{1,\vec{p}}} \right),
\ee
with 
\be
J_{21} \phi_k' = \alpha_{21}q_k^{\alpha_{21}-1}\phi_k^{\alpha_{22}}\phi_k'+O(\epsilon^r).
\ee
Due to even-odd parity, this term has algebraically small mass and we deduce that 
\be
|\otimes \vec{\chi} \cdot J_{21} \phi_k'|=O(\epsilon^r).
\ee
On the other hand, $J_{21} \phi_k'$ decays exponentially away from $x=p_k$ and hence $\|J_{21} \phi_k'\|_{L^1_{1,\vec{p}}}=O(1)$.
As a consequence of these two facts we find that
\be
\|L_{11}^{-e}J_{21} \phi_k'\|_{W^{1,1}_{\xi}} \leq C\epsilon^2.
\ee
However, $J_r^\dag$ is uniformly bounded in $L^1$ since the small eigenspace of $L_{22}$ is projected away by $\tpi_{22}$. 
Using \eqref{1lemma1} we deduce that
\be
\|\epsilon^{-1}L_{11}^{-e}J_r^{\dagger T}\|_{W^{1,1}_{\xi}} \leq  C\|J_r^{\dagger T}\|_{L^1} \leq  C.
\ee
Finally, the matrix $I+\cN_0^\dag=(I+\cN_0)^\dag$ is boundedly invertible  since $\cK$ is admissible.
Taking the $W^{1,1}_{\xi}$ norm of \eqref{psi1dag}, we  conclude that
\be
\|\Psi_{1,k}^{\dagger}\|_{W^{1,1}_{\xi}} \leq C\epsilon^2.
\ee
The remainder of \eqref{adjointest} follows by applying the estimates above to \eqref{psi2dag}. \hfill $\Box$

\section{Resolvent and Semi-Group Estimates}
In this section we generate resolvent and semi-group estimates for the reduced operator $\tL_\vp$, where we have
chosen an admissible class $\cK$ of semi-strong $N$-pulses with associated contour
$\mathcal{C} \subset \mathbb{C}$, as defined in \eqref{contour}.

\begin{lemma}
For $\lambda\in\cC$, any fixed $F=(f_1,f_2)^T \in L^2({\bf R})$, and $\lambda\in\cC$, denoting the
action of the resolvent of $\tL$ on $F$ by $(\tL-\lambda)^{-1}F= (g_1,g_2)^T,$ then
\beq
\label{g1res}  g_1=(L_{11}^e+\lambda)^{-1}\left(\epsilon^{-1}J_l^T(I+\cN_{\lambda})^{-1}\otimes J_r\cdot(L_{11}^e+\lambda)^{-1}-I\right)KF.
\eeq
and
\beq \label{g2res}
  g_2 =  \left(\tL_{22}-\lambda\right)^{-1}(f_2-J_{21}g_1),
\eeq
where $J_l$, $J_r$, and $\cN_\lambda$ are defined in \eqref{J_l}, \eqref{J_r}, and \eqref{N} respectively, and 
\be
\label{K} KF=f_1-\epsilon^{-1}J_{12}(\tilde{L}_{22}-\lambda)^{-1}f_2.
\ee
\end{lemma}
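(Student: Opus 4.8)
The plan is to solve the resolvent equation $(\tL_{\vp}-\lambda)(g_1,g_2)^T=(f_1,f_2)^T$ by block Gaussian elimination on the $2\times 2$ operator matrix \eqref{tildeL}, using that $\tL_{\vp}$ is lower triangular modulo the finite-rank terms $J_{11},J_{12}$. Written componentwise the system reads
\begin{align*}
-(L_{11}^e+\lambda)g_1-\eps^{-1}(J_{11}g_1+J_{12}g_2)&=f_1,\\
J_{21}g_1+(\tL_{22}-\lambda)g_2&=f_2.
\end{align*}
By the resolvent bound \eqref{tL22-est}, $\tL_{22}-\lambda$ is boundedly invertible for every $\lambda$ on and to the right of $\cC$, so the second equation can be solved for $g_2$, giving \eqref{g2res}.

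Substituting this expression for $g_2$ into the first equation eliminates $g_2$ and leaves a scalar equation for $g_1$ of the form
\[
\bigl(L_{11}^e+\lambda+\eps^{-1}(J_{11}-J_{12}(\tL_{22}-\lambda)^{-1}J_{21})\bigr)g_1=-KF,
\]
where $KF$ is the combination of $f_1$ and the $f_2$-term recorded in \eqref{K}. The operator in parentheses is a rank-$N$ perturbation of $L_{11}^e+\lambda$: inserting the definitions \eqref{J11}--\eqref{J12} of $J_{11},J_{12}$, using the exponential localization of $\phi_j$ and $\chi_j$ together with the self-adjointness of $\tL_{22}$, one recasts the bracketed operator (exactly as in the reduction \eqref{evp1}--\eqref{Psi1} carried out in the proof of Proposition\,\ref{prop:4}, now with the inhomogeneity $KF$ carried along) as the single finite-rank tensor operator $J_l^T\otimes J_r$, with $J_l,J_r$ as in \eqref{J_l}--\eqref{J_r}.

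It then remains to invert $L_{11}^e+\lambda+\eps^{-1}J_l^T\otimes J_r$, for which I would use the Sherman--Morrison--Woodbury reduction. Invert $L_{11}^e+\lambda$ (using Lemma\,\ref{l:L11}, valid since $\lambda\notin\sigma_{ess}(\tL_{\vp})$ on and to the right of $\cC$), act on both sides with the tensor map $\otimes\,J_r$, and solve the resulting $N\times N$ linear system; by the definition \eqref{N} its coefficient matrix is precisely $I+\cN_\lambda$, which is boundedly invertible by admissibility in the quantitative form \eqref{Nlam-est}. Back-substitution then produces the closed formula \eqref{g1res}, and feeding this $g_1$ into \eqref{g2res} completes the proof.

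The step I expect to be the main obstacle is the identification, in the second paragraph, of the elimination operator $J_{11}-J_{12}(\tL_{22}-\lambda)^{-1}J_{21}$ with the finite-rank tensor operator $J_l^T\otimes J_r$ attached to $\cN_\lambda$. This is more than a formal rearrangement: it re-runs the NLEP-type reduction of Proposition\,\ref{prop:4}, in which $(\tL_{22}-\lambda)^{-1}$ is commuted past the slowly varying factor $\Phi_1^{\alpha_{11}}$ and the localized windows $\chi_k$. The manipulation is legitimate because $\Phi_1$ varies on the long $O(\eps^{-1-\alpha/2})$ scale while the localized pulse data live on $O(1)$ intervals, and because $\tL_{22}$ is self-adjoint with an $O(1)$ spectral gap from its essential spectrum; once this bookkeeping, together with the correct unwinding of the tensor notation of \eqref{J_l}--\eqref{N}, is in place, everything else is routine linear algebra.
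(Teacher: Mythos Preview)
Your overall strategy is correct and is precisely the paper's approach: solve the second row for $g_2$, substitute into the first row to obtain \eqref{g1res2}, and then peel off the rank-$N$ piece via $\otimes J_r$ and invert the $N\times N$ matrix $I+\cN_\lambda$ (your ``Sherman--Morrison--Woodbury'' step is exactly the sequence \eqref{g1res2}--\eqref{g1res} in the paper).

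Your worry about the ``main obstacle'' is, however, misplaced: the identification
\[
J_{11}-J_{12}(\tL_{22}-\lambda)^{-1}J_{21}=J_l^{T}\otimes J_r
\]
is an \emph{exact} algebraic identity, not an asymptotic NLEP reduction. Both $J_{11}$ and $J_{12}$ already have the tensor form $\vec{\xi}^{\,T}\otimes(\cdot)$, so their range is contained in $\mathrm{span}\{\xi_1,\dots,\xi_N\}$; consequently $J_{11}-J_{12}(\tL_{22}-\lambda)^{-1}J_{21}$ is automatically rank~$N$ with the same range, and one simply reads off $J_r$ by moving $(\tL_{22}-\lambda)^{-1}$ across the $L^2$ pairing using the self-adjointness of $\tL_{22}$. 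No commutation of $(\tL_{22}-\lambda)^{-1}$ past $\Phi_1^{\alpha_{11}}$ or past $\chi_k$, and no scale-separation argument, is required here. The NLEP machinery in Proposition~\ref{prop:4} is invoked only later, to obtain the \emph{explicit} asymptotic form \eqref{Nmatrix} of $\cN_\lambda$; it plays no role in the resolvent formula itself.
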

\begin{proof} We recall the form \eqref{tildeL} of $\tL$ and write the resolvent problem as $(\tL-\lambda)(g_1,g_2)^T = (f_1,f_2)^T$.
For $\lambda \in \cC$ the operator $\tL_{22}-\lambda$ is invertible. Solving for $g_2$ and following the derivation of \eqref{Psi1}, we 
rewrite the first equation as,
\be
\label{g1res2} (L_{11}^e+\lambda)g_1+\epsilon^{-1}J_l^T\otimes J_r \cdot g_1=-KF.
\ee
We invert $L_{11}^e+\lambda$ and then project with the finite-rank operator $\otimes J_r$, to obtain the matrix system
\be\label{g1res3}
\otimes J_r \cdot g_1+\epsilon^{-1}\otimes J_r\cdot(L_{11}^e+\lambda)^{-1}J_l^T\otimes J_r \cdot g_1=-\otimes J_r\cdot(L_{11}^e+\lambda)^{-1}KF.
\ee
Recalling the matrix $\cN_{\lambda}$ from \eqref{N}, we may re-write this expression as
\be
\left(I+\cN_{\lambda}\right) \otimes J_r \cdot g_1=-\otimes J_r\cdot(L_{11}^e+\lambda)^{-1}KF.
\ee
Since $\lambda\notin\sigma_{ss}$ we may invert $I+\cN_\lambda$ to solve for $\otimes J_r\cdot g_1$.
Substituting this expression into \eqref{g1res3} and isolating $g_1$, we establish the closed form expression \eqref{g1res}.
\hfill \end{proof}

\subsection{Resolvent Estimates}
 Let $\cK$ be an admissible set of $N$ pulse configurations and let $\pi_\vp$ be the projection onto the small-eigenvalue eigenspace, $\sigma_0$,  of $\tL_\vp$, see \eqref{pi-def}, and let $\tpi_\vp$ denote the complementary projection. We recall the norm $\| \cdot \|_X$ introduced in 
 \eqref{Xnorm}, and the subspace $X_\vp$ corresponding to the range of $\tpi_\vp.$

\begin{proposition}
\label{prop-resolv}  There exists $C>0$ such that for all $\lambda$ in $\mathcal{C}$,  $F\in X_{\vec{p}}$, and $\vp \in \cK$, 
we have the following resolvent estimates for $\tL_\vp$,
\begin{eqnarray}
\label{propres1} \|(\tilde{L}-\lambda)^{-1}F\|_{X}& \leq& C\dfrac{\eps}{\Re(k_{\lambda})}\left(\eps\|f_1\|_{L_1}+\|f_2\|_{L^1_{\oalpha,\vp}}\right), \\
\label{propres2}  \|(\tilde{L}-\lambda)^{-1}F\|_{X} &\leq& C\dfrac{\eps}{\Re(k_{\lambda})}\left( \dfrac{\eps}{|k_{\lambda}|} \|f_1\|_{W_{\xi}^{1,1}}+
          \|f_2\|_{L^1_{\oalpha,\vp}} \right).
\end{eqnarray}
If in addition the coarse-grained projection of $f_1$ is small, then we have the enhanced residual estimate
\beq
\label{propres3}  \|(\tilde{L}-\lambda)^{-1}F\|_{X} \leq C\dfrac{\eps}{\Re(k_{\lambda})} 
\left( \left(\eps|\otimes \vec{\chi} \cdot f_1|+\epsilon^2\|f_1\|_{L^1_{1,\vec{p}}}+\|f_2\|_{L^1_{\oalpha,\vp}}\right)  \right).
\eeq
\end{proposition}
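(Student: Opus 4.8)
The strategy is to start from the closed-form resolvent identity \eqref{g1res}--\eqref{g2res}, first estimating the source term $KF$ of \eqref{K} and then propagating it through the formulas for $g_1$ and $g_2$. The three bounds \eqref{propres1}--\eqref{propres3} will come from feeding, respectively, the plain $L^1$ estimate, the $\min$-type $W^{1,1}_\xi/|k_\lambda|$ estimate, and the small-mass estimates of Lemma~\ref{l:L11} into this identity, while the admissibility hypothesis \eqref{Nlam-est} supplies the key control of $(I+\cN_\lambda)^{-1}$ near the branch point and the resolvent bound \eqref{tL22-est} controls every occurrence of $(\tL_{22}-\lambda)^{-1}$. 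First I would bound $KF$: by \eqref{tL22-est}, $\|(\tL_{22}-\lambda)^{-1}f_2\|_{H^1_{\oalpha,\vp}}\le C\|f_2\|_{L^1_{\oalpha,\vp}}$, hence this quantity is controlled in $L^\infty$, and since $J_{12}$ of \eqref{J12} is finite rank with kernels $\Phi_2^{\alpha_{12}-1}\Phi_1^{\alpha_{11}}\chi_k$ that are $O(1)$ in $L^1$ (using the exponential localization of $\Phi_2$ and $\Phi_1(p_k)=q_k=O(1)$), the function $J_{12}(\tL_{22}-\lambda)^{-1}f_2$ is a combination $\sum_k c_k\xi_k$ with $|c_k|\le C\|f_2\|_{L^1_{\oalpha,\vp}}$. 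Thus $\|KF\|_{L^1}\le\|f_1\|_{L^1}+C\eps^{-1}\|f_2\|_{L^1_{\oalpha,\vp}}$, and similarly in $W^{1,1}_\xi$; moreover when $\otimes\vchi\cdot f_1$ is small the coarse-grained mass of $KF$ reduces to that of $f_1$ plus an $O(\eps^{-1}\|f_2\|_{L^1_{\oalpha,\vp}})$ correction, which is what feeds \eqref{propres3}.

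Next I would treat $g_1$, rewriting \eqref{g1res} as $g_1=-(L_{11}^e+\lambda)^{-1}KF+\eps^{-1}(L_{11}^e+\lambda)^{-1}J_l^T(I+\cN_\lambda)^{-1}\otimes J_r\cdot(L_{11}^e+\lambda)^{-1}KF$. The first term is bounded in $W^{1,1}_\xi$ directly by \eqref{1lemma1} (or \eqref{5lemma1} in the small-mass case), giving $\le\frac{C\eps^2}{\Re(k_\lambda)}\|KF\|_{L^1}$. For the second term I would apply \eqref{3lemma1} (or \eqref{6lemma1}) to get $\|(L_{11}^e+\lambda)^{-1}KF\|_{L^\infty}\le\frac{C\eps^2}{|k_\lambda|}\|KF\|_{L^1}$, pair this against the $L^1$-bounded kernels of $J_r$ in \eqref{J_r} to bound $|\otimes J_r\cdot(L_{11}^e+\lambda)^{-1}KF|$, apply the admissibility estimate \eqref{Nlam-est}, $|(I+\cN_\lambda)^{-1}|\le C(1+\eps/|k_\lambda|)^{-1}$, reassemble $J_l^T$ into $\sum_k v_k\xi_k$, and apply \eqref{1lemma1} once more, contributing another factor $\frac{C\eps^2}{\Re(k_\lambda)}$. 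Against the $\eps^{-1}$ prefactor, this second term is of the same order as, or smaller than, the first, so $\|g_1\|_{W^{1,1}_\xi}\le\frac{C\eps^2}{\Re(k_\lambda)}\|KF\|_{L^1}$; inserting the bound on $\|KF\|_{L^1}$ yields the $g_1$-part of \eqref{propres1}, and using instead the $\min$-branch of \eqref{1lemma1}, or the small-mass estimates \eqref{5lemma1}--\eqref{6lemma1} on the $f_1$-part of $KF$, yields those of \eqref{propres2} and \eqref{propres3}.

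Then I would handle $g_2$ via \eqref{g2res} and \eqref{tL22-est}: $\|g_2\|_{H^1_{\oalpha,\vp}}\le C(\|f_2\|_{L^1_{\oalpha,\vp}}+\|J_{21}g_1\|_{L^1_{\oalpha,\vp}})$, and since $J_{21}$ of \eqref{J21-def} is multiplication by an exponentially localized $O(1)$ function, $\|J_{21}g_1\|_{L^1_{\oalpha,\vp}}\le C\|g_1\|_{L^\infty}\le C\|g_1\|_{W^{1,1}_\xi}$, already controlled. Adding the $g_1$ and $g_2$ contributions gives the $X$-norm estimate for $(\tL-\lambda)^{-1}F$. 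Because $\pi_\vp$ is the spectral projection of $\tL$ onto $\sigma_0$ and $F\in X_\vp$, the resolvent stays in $X_\vp$, so the decomposition is consistent and $(\tL-\lambda)^{-1}$ is well-defined on $\cC$ even though $\sigma_0$ lies to its right.

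The main obstacle is the bookkeeping of singular prefactors on the near-branch-point segment $\cC_\nu$, where $|k_\lambda|\sim\eps^{1+\alpha/2}$ is tiny, so the factors $\eps^{-1}$, $\eps^2/|k_\lambda|$, $(1+\eps/|k_\lambda|)^{-1}$ and $1/\Re(k_\lambda)$ must cancel precisely to land on the claimed powers of $\eps$, $k_\lambda$ and $\Re(k_\lambda)$; this is exactly where the admissibility bound \eqref{Nlam-est} is indispensable. A secondary technicality is the unbounded arms $\cC_\pm$, along which $\eps/\Re(k_\lambda)\to0$: there one must refine \eqref{tL22-est} using the standard $|\lambda|^{-1/2}$ decay of $(\tL_{22}-\lambda)^{-1}$ (and of $(L_{11}^e+\lambda)^{-1}$) for large $|\lambda|$ in order to retain the $\eps/\Re(k_\lambda)$ prefactor there.
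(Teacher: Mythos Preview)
Your proposal is correct and follows essentially the same route as the paper: start from the closed-form identity \eqref{g1res}--\eqref{g2res}, bound $KF$ via \eqref{tL22-est}, push through $(L_{11}^e+\lambda)^{-1}$ using Lemma~\ref{l:L11} together with the admissibility bound \eqref{Nlam-est} on $(I+\cN_\lambda)^{-1}$, and finish $g_2$ with \eqref{tL22-est} and the localization of $J_{21}$. Your closing remark about the arms $\cC_\pm$ is a fair observation---the paper's written argument obtains only an $O(1)$ bound on $g_2$ from \eqref{tL22-est}, which does not literally fit under the prefactor $\eps/\Re(k_\lambda)$ once $|\lambda|$ is large; the standard sectorial $|\lambda|^{-1/2}$ decay of $(\tL_{22}-\lambda)^{-1}$ (and of $(L_{11}^e+\lambda)^{-1}$) is indeed what closes that gap, and in any case the exponential weight $e^{\lambda t}$ in the contour integral \eqref{semig} renders the issue harmless for the semi-group estimates.
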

Proof:  Using the notation
\be
(\tilde{L}-\lambda)^{-1}F=\left( \ba c g_1 \\ g_2 \ea \right),
\ee
we apply the $W_{\xi}^{1,1}$ norm to $g_1$ as represented in \eqref{g1res} and use the estimate \eqref{1lemma1},
\be \label{g_1W11} \|g_1\|_{W_{\xi}^{1,1}} \leq C\dfrac{\eps}{\Re(k_{\lambda})}\|J_l^T(I+\cN_{\lambda})^{-1}\otimes J_r\cdot(L_{11}^e+\lambda)^{-1}KF\|_{L^1} + C\dfrac{\epsilon^2}{\Re(k_{\lambda})}\|KF \|_{L^1}.
\ee
Since $\cK$ is admissible, we have the estimate \eqref{Nlam-est} on  $I+\cN_\lambda$ for $\lambda\in\cC.$ 
Together with the uniform bound on $\|J_l\|_{L^1}=\vec{\xi}$, we have
\be
\label{J_lbnd1} \|J_l^T(I+\cN_{\lambda})^{-1}\|_{L^1} \leq C\left(1+\frac{\eps}{|k_\lambda|}\right)^{-1}.
\ee
Contained within $\otimes J_r$ is $(\tilde{L}_{22}-\lambda)^{-1}$ which is uniformly invertible from $L^2$ to $H^1$ for $\lambda \in \mathcal{C}$ since $F \in X_{\vec{p}}$.  Together with  \eqref{3lemma1}, these observations afford the estimates
\be
\label{J_rbnd1} |\otimes J_r\cdot(L_{11}^e+\lambda)^{-1}KF| \leq \|J_r\|_{L_1}\|(L_{11}^e+\lambda)^{-1}KF\|_{L^{\infty}} \leq C\dfrac{\epsilon^2}{|k_{\lambda}|}\|KF\|_{L^1},
\ee
Combining \eqref{J_lbnd1}, \eqref{J_rbnd1}, and \eqref{g_1W11} we find
\be
\|g_1\|_{W_{\xi}^{1,1}} \leq C\dfrac{\epsilon^2}{\Re(k_{\lambda})}\|KF\|_{L^1}.
\ee
Estimating the right hand side, we have that
\be
\|KF\|_{L^1} \leq \|f_1\|_{L^1}+\epsilon^{-1}\|J_{12}(\tilde{L}_{22}-\lambda)^{-1}f_2\|_{L^1}.
\ee
Furthermore,
\be
\|J_{12}(\tilde{L}_{22}-\lambda)^{-1}f_2\|_{L^1} \leq \|J_{12}\|_{L^2}\|(\tilde{L}_{22}-\lambda)^{-1}f_2\|_{L^2} \leq C\|f_2\|_{L^2},
\ee
which leads to the bound
\be
\label{g1bnd} \|g_1\|_{W_{\xi}^{1,1}} \leq C\dfrac{\eps}{\Re(k_{\lambda})}\left(\eps\|f_1\|_{L^1}+\|f_2\|_{L^2}\right).
\ee
Taking the $H^1_{\oalpha,\vp}$ norm of both sides of \eqref{g2res} and applying \eqref{tL22-est}, we obtain the bound
\begin{eqnarray}
 \|g_2\|_{H^1_{\oalpha,\vp}} = \|(\tilde{L}_{22}-\lambda)^{-1}(f_2-J_{21}g_1)\|_{H^1_{\oalpha,\vp}} &\leq &
C(\|f_2\|_{L^1_{\oalpha,\vp}}+\|J_{21}g_1\|_{L^1_{\oalpha,\vp}}), \nonumber \\
 & \leq &C(\|f_2\|_{L^1_{\oalpha,\vp}}+\|g_1\|_{W_{\xi}^{1,1}}).\label{g2bnd}
\end{eqnarray}
Combining \eqref{g1bnd} and \eqref{g2bnd} we obtain \eqref{propres1}.  To obtain 
\eqref{propres2}, we  take the $W_{\xi}^{1,1}$ norm both sides of \eqref{g1res} and then split the right-hand side into two parts,
\begin{align}
\|g_1\|_{W_{\xi}^{1,1}} \leq& \|(L_{11}^e+\lambda)^{-1}\epsilon^{-1}J_l^T(I+\cN_{\lambda})^{-1}\otimes J_r\cdot(L_{11}^e+\lambda)^{-1}\|_{W_{\xi}^{1,1}}+ \|(L_{11}^e+\lambda)^{-1}KF\|_{W_{\xi}^{1,1}},\nonumber\\
 \leq& C\dfrac{\eps}{\Re(k_{\lambda})}\|J_l^T(I+\cN_{\lambda})^{-1}\otimes J_r\cdot(L_{11}^e+\lambda)^{-1}KF\|_{L^1}+ \|(L_{11}^e+\lambda)^{-1}KF \|_{W_{\xi}^{1,1}},
\end{align}
where we applied \eqref{1lemma1} to obtain the second line of the inequality. We address the first factor of the first term on the right-hand side,
\beq\label{misc-bnd}
|\otimes J_r\cdot(L_{11}^e+\lambda)^{-1}KF|  \leq \|J_r\|_{L^1}\|(L_{11}^e+\lambda)^{-1}KF\|_{L^{\infty}}
\leq C\|(L_{11}^e+\lambda)^{-1}KF\|_{W_{\xi}^{1,1}}.
\eeq
Recalling both \eqref{J_lbnd1} and \eqref{g2bnd}, while using \eqref{1lemma1} to bound the $KF$ term on the right-hand side of \eqref{misc-bnd},
we find \eqref{propres2}.  For the estimate \eqref{propres3}, the bound on the $f_2$ component is as before; so without loss of
generality we consider the case $F=(f_1,0)^T$. Taking the $W_{\xi}^{1,1}$ norm of \eqref{g1res}, we have
\beq
\|g_1\|_{W_{\xi}^{1,1}} \leq
 C\left(\epsilon^{-1}\|(L_{11}^e+\lambda)^{-1}J_l^T\|_{W_{\xi}^{1,1}}\left|\otimes J_r\cdot(L_{11}^e+\lambda)^{-1}f_1\right|
+\|(L_{11}^e+\lambda)^{-1}f_1\|_{W_{\xi}^{1,1}}\right).
\eeq
Using \eqref{6lemma1} and the uniform $L^1$ bound on $J_r$, we obtain
\be
|\otimes J_r\cdot(L_{11}^e+\lambda)^{-1}f_1| \leq \|J_r\|_{L^1}\|(L_{11}^e+\lambda)^{-1}f_1\|_{L^\infty} \leq C\left( \dfrac{\epsilon^2}{|k_{\lambda}|}|\otimes \vec{\chi} \cdot f_1| +\epsilon^2\|f_1\|_{L^1_{1,\vec{p}}} \right).
\ee
From \eqref{1lemma1}, we have the bound
\be
\|(L_{11}^e+\lambda)^{-1}J_l^T\|_{W_{\xi}^{1,1}} \leq C\dfrac{\epsilon^2}{\Re(k_{\lambda})}\|J_l^T\|_{L^1} \leq C\dfrac{\epsilon^2}{\Re(k_{\lambda})}.
\ee
Finally, applying \eqref{5lemma1} to the remaining term,
\be
\|(L_{11}^e+\lambda)^{-1}f_1\|_{W_{\xi}^{1,1}} \leq C\left( \dfrac{\epsilon^2}{\Re(k_{\lambda})}|\otimes \vec{\chi} \cdot f_1| +\epsilon^2\dfrac{|k_{\lambda}|}{\Re(k_{\lambda})}\|f_1\|_{L^1_{1,\vec{p}}} \right).
\ee
Combining these estimates, we have \eqref{propres3}. \hfill $\Box$

\subsection{Semi-group estimates}
Let $\cK$ be an admissible set of $N$-pulse positions with respect to the contour $\cC$. For fixed $\vp \in \cK$, the bounds on the point and essential spectrum of $\tL_\vp$, together 
with the resolvent estimates of Proposition\,\ref{prop-resolv} show that the operator is sectorial, and from classical result, \cite{Henry}, we can generate the semi-group, $S_\vp(t)$, from the Laplace transform of the resolvent of $\tL_\vp$. Except for the $N$ point spectrum eigenvalues, 
$\sigma_0$, the spectrum of $\tL$ lies entirely on the left-hand side of $\cC$ and the semi-group $S_\vp$ associated to $\tL_\vp$ is given by the contour integral
\be
\label{semig} S_\vp(t)F=\frac{1}{2\pi i}\int_{\mathcal{C}}e^{\lambda t}(\lambda-\tL_\vp)^{-1}F d\lambda,
\ee
for all $F \in X_{\vec{p}}$.  The following estimates hold on the semi-group.

\begin{proposition} \label{P:semigroup} For any $t_0 >0$, there exists $C>0$ such that for all $\vec{p} \in \cK_{\ell,\nu},$ $F \in X_{\vec{p}}$, and $t \geq t_0$ the semi-group satisfies
\begin{align}
\label{semigroup1} \|S(t)F\|_X \leq& C e^{-\epsilon^{\alpha}\nu t}\left(\eps\|f_1\|_{L_1}+\|f_2\|_{L^1_{\oalpha,\vp}}\right), \\
\label{semigroup2} \|S(t)F\|_X \leq& Ce^{-\epsilon^{\alpha}\nu t}\left(|\ln\eps| \|f_1\|_{W^{1,1}_{\xi}}+\|f_2\|_{L^1_{\oalpha,\vp}}\right) \leq 
Ce^{-\epsilon^{\alpha}\nu t}|\ln\eps|\|F\|_{X}.
\end{align}
If in addition the coarse-grained projection of $f_1$ is small, then we have the improved estimate
\be
\label{semigroup3} \|S(t)F\|_X \leq Ce^{-\epsilon^{\alpha}\nu t}\left(\eps |\otimes\vec{\chi} \cdot f_1|+\epsilon^2\|f_1\|_{L_{1,\vec{p}}^1}+\|f_2\|_{L^1_{\oalpha,\vp}}\right).
\ee
\end{proposition}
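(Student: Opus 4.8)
The plan is to obtain the semi-group bounds by estimating the contour integral \eqref{semig} directly, splitting the contour $\cC = \cC_\nu \cup \cC_- \cup \cC_+$ into the near-branch-point piece $\cC_\nu$ and the two slanted tails $\cC_\pm$, and inserting into each piece the appropriate resolvent estimate from Proposition\,\ref{prop-resolv}. On the whole contour $\Re\lambda \leq -\eps^\alpha\nu$, so $|e^{\lambda t}| \leq e^{-\eps^\alpha\nu t}$ can be factored out; the remaining work is to show that the $\lambda$-integral of the resolvent norm against the residual oscillatory factor $e^{(\lambda+\eps^\alpha\nu)t}$ converges, uniformly in $\vp\in\cK$, and contributes only the stated $\vp$-independent prefactor (with the extra $|\ln\eps|$ in \eqref{semigroup2}).

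First I would parametrize $\cC_\nu$ by $\lambda = -\eps^\alpha\nu + is$, $s\in[-b,b]$, where $k_\lambda = \eps\sqrt{\lambda + \eps^\alpha\mu} = \eps\sqrt{\eps^\alpha(\mu-\nu)+is}$, so that $\Re(k_\lambda)\geq c\,\eps^{1+\alpha/2}$ uniformly and $|k_\lambda| \sim \eps(\eps^{\alpha/2}+|s|^{1/2})$. Feeding \eqref{propres1} into \eqref{semig} on this piece gives a factor $\eps/\Re(k_\lambda) \lesssim \eps^{-\alpha/2}$, and the $s$-integral over $[-b,b]$ of the bounded integrand is $O(1)$; combined with the $\eps$ multiplying $\|f_1\|_{L^1}$ this yields \eqref{semigroup1}. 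For \eqref{semigroup2} I use \eqref{propres2} instead: the dangerous factor is $\eps^2/(|k_\lambda|\Re(k_\lambda))$, and $\int_{-b}^b \eps^2/(|k_\lambda|\Re(k_\lambda))\,ds \lesssim \eps^{-\alpha}\int_{-b}^{b} \frac{ds}{\eps(\eps^{\alpha/2}+|s|^{1/2})}$, whose logarithmic divergence at $s=0$ is cut off at scale $\eps^\alpha$, producing exactly the $|\ln\eps|$ loss. The enhanced estimate \eqref{semigroup3} is obtained identically from \eqref{propres3}, where the small-mass structure removes one power of $\eps$ and avoids the logarithm.

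On the tails $\cC_\pm$, parametrize $\lambda = -\eps^\alpha\nu \pm ib + s e^{\pm i 5\pi/6}$ with $s\in(-\infty,0]$; here $|\lambda + \eps^\alpha\mu|$ grows linearly in $|s|$, so $\Re(k_\lambda) \sim |k_\lambda| \sim \eps|s|^{1/2}$ for $|s|$ large, while the angle $5\pi/6$ ensures $\Re\lambda \leq -\eps^\alpha\nu - c|s|$ so that $e^{(\lambda+\eps^\alpha\nu)t}$ decays exponentially in $|s|$ (uniformly for $t\geq t_0$ — this is where the $t_0$ hypothesis enters, to absorb the algebraic growth of the integrand near $s=0$ and secure absolute convergence). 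The resolvent factors from Proposition\,\ref{prop-resolv} are then bounded by negative powers of $|s|$ times an $\eps$-dependent constant, and the resulting integral converges to something dominated by its contribution near $s=0$, which matches the $\cC_\nu$ estimate. Throughout, uniformity in $\vp\in\cK$ is inherited from the uniform constants in Proposition\,\ref{prop-resolv} and in the admissibility bound \eqref{Nlam-est}.

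The main obstacle is the near-branch-point behavior on $\cC_\nu$: the resolvent norm genuinely blows up like $|k_\lambda|^{-1} \sim (\eps|s|^{1/2})^{-1}$ as $s\to 0$, and one must verify that this singularity is integrable and that the cutoff scale is precisely $\eps^\alpha\nu$ (the real part of the contour), so that the accumulated loss is no worse than $|\ln\eps|$ in \eqref{semigroup2} and is entirely absent in \eqref{semigroup1} and \eqref{semigroup3} thanks to the compensating power of $\eps$. Getting the bookkeeping of powers of $\eps$ exactly right here — tracking $\Re(k_\lambda)$ versus $|k_\lambda|$ and which norm of $f_1$ is being controlled — is the delicate part; the tail estimates and the passage from the resolvent being sectorial to the semi-group representation \eqref{semig} are routine given \cite{Henry}.
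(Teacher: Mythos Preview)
Your approach is correct and coincides with the paper's: insert the resolvent bounds \eqref{propres1}--\eqref{propres3} into the contour representation \eqref{semig} and estimate the resulting $\lambda$-integrals over $\cC_\nu$ and $\cC_\pm$, with the $t\geq t_0$ hypothesis used exactly where you say, to control the tails. The paper packages the contour integration into a single auxiliary lemma bounding $\int_\cC |e^{\lambda t}|\,|\Re k_\lambda|^{-p_1}|k_\lambda|^{-p_2}\,|d\lambda|$ by $Ce^{-\eps^\alpha\nu t}\eps^{-(p_1+p_2)}$ (with an extra $|\ln\eps|$ precisely at the borderline $p_1+p_2=2$ corresponding to \eqref{propres2}), from which \eqref{semigroup1}--\eqref{semigroup3} are read off; your inline computation is the same estimate, though note that on $\cC_\nu$ the factor $\eps/\Re(k_\lambda)$ is not uniformly bounded but merely integrable with $O(1)$ integral, since its $\eps^{-\alpha/2}$ peak near $s=0$ has width $\sim\eps^\alpha$.
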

\begin{proof}
Applying each of the resolvent estimates \eqref{propres1}-\eqref{propres3} from Proposition 5.1 to \eqref{semig} and using the integral bounds
\eqref{int1} we find \eqref{semigroup1}-\eqref{semigroup3} respectively. 
\hfill \end{proof}

The following lemma affords estimates of the integral of the resolvent over the contour $\cC.$
\begin{lemma}
Fix $p_1,p_2>0$, $t_0>0$ and let the contour $\cC$ be as in \eqref{contour}, then there exists  $C>0$ such that for all $t>t_0$
\beq
\label{int1} \int_{\mathcal{C}}\Frac{ |e^{\lambda t}|}{|\Re(k_{\lambda})|^{p_1}|k_\lambda|^{p_2}}d\lambda \leq Ce^{-\epsilon^{\alpha}\nu t} \eps^{-(p_1+p_2)}\left\{ \begin{array}{lcl} 1 &{\rm if}& p_1+p_2<2, \\
                                                                |\ln \eps| &{\rm if}& p_1+p_2=2,\\
                                                                \eps^{-\frac{\alpha}{2}(p_1+p_2-2)} &{\rm if} & p_1+p_2>2.\end{array}\right.
\eeq
\end{lemma}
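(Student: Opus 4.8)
The plan is to parametrize the contour $\cC$ piece-by-piece and reduce everything to a single scalar integral in the variable $\lambda+\eps^\alpha\mu$, which is the natural variable since $k_\lambda = \eps\sqrt{\lambda+\eps^\alpha\mu}$. On the vertical segment $\cC_\nu$, write $\lambda = -\eps^\alpha\nu + is$ with $s\in[-b,b]$; then $\lambda+\eps^\alpha\mu = \eps^\alpha(\mu-\nu) + is$, which stays an $O(1)$ (in fact $O(\eps^\alpha)$-bounded-below in real part when $\nu<\mu$, but more usefully $O(1)$ in modulus for $|s|$ up to $b$) distance from $0$; hence $|k_\lambda|$ and $|\Re k_\lambda|$ are both comparable to $\eps\sqrt{|\lambda+\eps^\alpha\mu|}$, and the integrand is bounded by $Ce^{-\eps^\alpha\nu t}\,\eps^{-(p_1+p_2)}|\lambda+\eps^\alpha\mu|^{-(p_1+p_2)/2}$. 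The factor $e^{-\eps^\alpha\nu t}$ pulls straight out since $|e^{\lambda t}| = e^{-\eps^\alpha\nu t}$ on this segment. So the contribution of $\cC_\nu$ is $Ce^{-\eps^\alpha\nu t}\eps^{-(p_1+p_2)}\int_{\cC_\nu}|\lambda+\eps^\alpha\mu|^{-(p_1+p_2)/2}\,|d\lambda|$, and the remaining integral is over a segment of length $O(1)$ passing within $O(\eps^\alpha)$ of the branch point; substituting $u = s/\eps^\alpha$ (or just estimating directly) gives $\int_{-b}^b (\eps^{2\alpha}(\mu-\nu)^2+s^2)^{-(p_1+p_2)/4}\,ds$, which evaluates to a constant if $p_1+p_2<2$, to $|\ln\eps|$ (up to the $\eps^\alpha$ scale, i.e. $O(\alpha|\ln\eps|)=O(|\ln\eps|)$) if $p_1+p_2=2$, and to $\eps^{-\alpha(p_1+p_2-2)/2}$ if $p_1+p_2>2$. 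This reproduces exactly the three cases in the statement.

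Next I would handle the two slanted tails $\cC_\pm$. Parametrize $\cC_+$ by $\lambda = -\eps^\alpha\nu + ib + s e^{i5\pi/6}$, $s\in[-\infty,0]$ (so $s$ runs from $-\infty$ up to $0$); the key point is that $\Re\lambda = -\eps^\alpha\nu + s\cos(5\pi/6) = -\eps^\alpha\nu - \frac{\sqrt3}{2}|s|$ decreases linearly as we move out the tail, so $|e^{\lambda t}| = e^{-\eps^\alpha\nu t}e^{-\frac{\sqrt3}{2}|s|t}$ and the exponential in $|s|$ gives absolute convergence and confines the effective integration to $|s|\lesssim 1/t \le 1/t_0$. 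On this region $\lambda+\eps^\alpha\mu$ is bounded away from $0$ by an $O(1)$ amount for $|s|$ bounded below (the tail starts at height $b$ above the real axis), while for the near-vertex portion it matches onto the $\cC_\nu$ estimate; in either case $|\Re k_\lambda|^{-p_1}|k_\lambda|^{-p_2} \le C\eps^{-(p_1+p_2)}(1+|s|)^{-(p_1+p_2)/2}$ — actually it suffices to bound it by $C\eps^{-(p_1+p_2)}$ on $|s|\le 1$ and use the exponential decay for $|s|\ge 1$. Then $\int_{\cC_+}|e^{\lambda t}|\,|\Re k_\lambda|^{-p_1}|k_\lambda|^{-p_2}\,|d\lambda| \le Ce^{-\eps^\alpha\nu t}\eps^{-(p_1+p_2)}\int_0^\infty e^{-\frac{\sqrt3}{2}st}\,ds = Ce^{-\eps^\alpha\nu t}\eps^{-(p_1+p_2)}/t$, and since $t\ge t_0$ this is $\le (C/t_0)e^{-\eps^\alpha\nu t}\eps^{-(p_1+p_2)}$, which is absorbed into the first (constant) case and is certainly dominated by the three-case bound. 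The tail $\cC_-$ is identical by symmetry.

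The only genuinely delicate point — and the step I expect to be the main obstacle — is the behavior near the branch point $\lambda = -\eps^\alpha\mu$, where $k_\lambda\to 0$ and the integrand has an integrable (for $p_1+p_2<2$) or logarithmically/algebraically divergent-after-scaling singularity. One must verify that the contour $\cC$ genuinely stays a distance $\gtrsim\eps^\alpha$ from the branch point (it does: the vertical segment $\cC_\nu$ sits at real part $-\eps^\alpha\nu$ with $\nu<\mu$, so $\Re(\lambda+\eps^\alpha\mu) = \eps^\alpha(\mu-\nu)\ge c\eps^\alpha>0$ along it), so that $|\lambda+\eps^\alpha\mu|\ge c\eps^\alpha$ everywhere on $\cC$ and hence $|k_\lambda|\ge c\eps^{1+\alpha/2}$, $|\Re k_\lambda|\ge c'\eps^{1+\alpha/2}$; the constant $c'$ in the real-part bound requires a short argument using that $\lambda+\eps^\alpha\mu$ lies in a sector strictly inside the right half-plane (guaranteed by the $5\pi/6$ opening angle of $\cC_\pm$ and the choice of $b$), so $\sqrt{\lambda+\eps^\alpha\mu}$ lies in a sector strictly inside the right half-plane and its real part is comparable to its modulus. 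Granting this, the worst case is $\lambda$ within $O(\eps^\alpha)$ of the branch point along $\cC_\nu$, where $|k_\lambda|\sim\eps^{1+\alpha/2}$; a neighborhood of arclength $O(\eps^\alpha)$ there contributes $\eps^\alpha\cdot(\eps^{1+\alpha/2})^{-(p_1+p_2)} = \eps^{\alpha-(1+\alpha/2)(p_1+p_2)}$, which one checks equals $\eps^{-(p_1+p_2)}$ times $\eps^{\alpha - (\alpha/2)(p_1+p_2)} = \eps^{-\frac{\alpha}{2}(p_1+p_2-2)}$ — precisely the $p_1+p_2>2$ branch; and a careful accounting of the transition at $p_1+p_2=2$ (balancing the $O(\eps^\alpha)$ near-vertex piece against the $O(1)$ far piece of $\cC_\nu$) produces the logarithm. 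I would organize the near-branch-point analysis as the substitution $\lambda + \eps^\alpha\mu = \eps^{2\alpha}\zeta$ on the portion of $\cC_\nu$ within $O(\eps^\alpha)$ of the branch point and a direct modulus estimate elsewhere, then sum the three contributions ($\cC_\nu$-near, $\cC_\nu$-far, $\cC_\pm$) and take the worst of them, which yields \eqref{int1}.
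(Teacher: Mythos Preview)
Your proposal is correct and follows essentially the same route as the paper: dispense with the slanted tails $\cC_\pm$ via the exponential decay of $|e^{\lambda t}|$, parametrize the vertical segment by $\lambda=-\eps^\alpha\nu+is$, reduce to the scalar integral $\int_0^b(\eps^{2\alpha}c^2+s^2)^{-(p_1+p_2)/4}\,ds$, and extract the three cases by the rescaling $s\mapsto\eps^\alpha s$. Your version is more explicit about the sector argument ensuring $|\Re k_\lambda|\sim|k_\lambda|$ on $\cC$, and you correctly identify the constant as $(\mu-\nu)$ rather than the paper's $\nu$, but the strategy is identical.
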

\begin{proof}
Estimates on the angled parts $\mathcal C_{l^\pm}$ are straightforward because of the exponential decay in $e^{\lambda t}$.  The concern is for 
the vertical part, $\cC_v$ where for $\lambda=-\eps^\alpha\nu +is$, we have
\beq
C_- \eps \left(\epsilon^{2\alpha}\nu^2+s^2\right)^{\frac14}\leq|k_{\lambda}| +|\Re(k_{\lambda})| \leq C_+ \eps \left(\epsilon^{2\alpha}\nu^2+s^2\right)^{\frac14},
\eeq
for some constants $C_\pm>0$.
Applying these estimates \eqref{int1} we have
\beq
 \int_{\mathcal{C}}\Frac{ |e^{\lambda t}|}{|\Re(k_{\lambda})|^{p_1}|k_\lambda|^{p_2}}d\lambda \leq C e^{-\epsilon^{\alpha}\nu t}
 \eps^{-(p_1+p_2)}\int_{0}^{b}\Frac{ds}{(s^2+\eps^{2\alpha}\nu^2)^\frac{p_1+p_2}{4}}.
 \eeq 
For $p_1+p_2<2$ the integral is uniformly bounded, even if $\mu=0$. For $p_1+p_2\geq2$ we rescale $s$ by $\eps^\alpha$ and bound
the resulting integral.
\hfill \end{proof}
\begin{remark}
The semi-group estimates are derived for $t>t_0$. Short time estimates can be derived which scale like $t^{-1/2}$ as $t\to0.$
This integrable singularity has no impact on the analysis which follows, and we omit it.
\end{remark}

\section{Nonlinear Adiabatic Stability by Renormalization Group}
Our primary result is that, in an $\|\cdot\|_X$ neighborhood of each admissible $N$-pulse manifold $\cK$, we may
decompose solutions $\vU=(U,V)^T$ of \eqref{UV} as
\beq
\label{decompU,V} \vU = \Phi_{\vec{p}} + W^*(x,t),
\eeq
where the $N$-pulse configuration $\vp=\vp(t)$ is dynamic in time and the remainder $W^*=(W_1^*,W_2^*)^T$ can be uniformly controlled in the $\|\cdot\|_X$ norm. In particular, we prove the adiabatic stability result, \eqref{Main-bound}, and derive the limiting pulse dynamics, \eqref{Main-pulse}. 
The pulse profiles, $\Phi$, are only approximate equilibria.  Inserting the decomposition \eqref{decompU,V} into \eqref{UV} and Taylor expanding $\cF$ about $\Phi_\vp$ yields
\be
\label{decompU} W^*_t +\nabla_\vp\,  \Phi \cdot\dot{\vp}=\cF(\Phi)+L_{\vp}W^*+\cNon(W^*),
\ee
where the linear operator $L_\vp$ was introduced in \eqref{lin-op} and the nonlinearity $\cNon= (\cNon_1,\cNon_2)^T$  is 
\beq\label{defN}
\cNon(W^*) := \cF(\Phi+W^*)-\cF(\Phi)-L_\vp\, W^*.
\eeq

The renormalization group procedure starts by freezing the reference pulse configuration $\vec{p}=\vec{p}_0$ in $X_{\vec{p}_0}$, where $\vec{p}_0$ is as constructed in Proposition 6.1. The semi-group estimates of  Proposition\,\ref{P:semigroup} require a linear operator with a frozen pulse configuration,
so that we replace $L_\vp$ not merely with its reduced linearization, but with its reduced linearization at a frozen pulse configuration, $\tL_{\vp_0}.$ 
Moreover, we separate the remainder $W^*$ into two parts, $W^*=W+{\Phi}_c(\cdot,\vp,\vp_0)$, where the {\sl correction term}, $\Phi_c$, 
which serves to adjust the shape of the pulse profile, depends upon both the fast pulse configuration, $\vp=\vp(t)$, and the frozen reference $\vp_0.$    More specifically,
we have the decomposition
\beq\label{decompPhic}
\vU = \Phi_{\vec{p}}+ {\Phi}_c(\vp,\vp_0)+W, 
\eeq
where the correction term,  ${\Phi}_c= \left(\Phi_{c,1},\Phi_{c,2}\right)^T$ is chosen to cancel the parts of the residual, $\cF(\Phi)$, which do 
not contribute to the pulse dynamics,
\be
\label{vecPhi1} \Phi_c(\cdot; \vp\,, \vp_0) := -\tL_{\vp_0}^{-1}\tpi_{\vp_0}\cF\big(\Phi(\cdot;\vp\,)\bigr).
\ee
Here $\tL_{\vp_0}$ is the reduced operator defined in \eqref{tildeL}, and $\tpi_{\vp_0}$ is the orthogonal spectral projection off of its
small eigenvalue eigenspace, $\sigma_0(\vp_0)$, defined in \eqref{tildepi}.  The complementary part of the remainder, $W=(W_1,W_2)^T$, 
incorporates errors that arise from the transient nature of the flow - parts which may not necessarily be slaved to the pulse configurations.
  
The following proposition constructs a base point $\vp_0$ about which the local coordinate system is developed.
\begin{proposition} 
\label{P:basepoint} Let $\cK$ be an admissible class of $N$-pulses. Then there exist $\delta$ sufficiently small, 
$M_1 > 0$, and a smooth function $\cJ:X \to \cK$ such that for  all  $\vU_0=(U_0,V_0)^T$ satisfying
\be
\|\vec{U}_0-\left(\Phi(\vp_*)+\Phi_c(\vp_*,\vp_*)\right)\|_X \leq \delta,
\ee
where $W_*:=\vU_0-(\Phi(\vp_*)+\Phi_c(\vp_*,\vp_*))$, for some $\vec{p}_* \in \cK_\delta$, then $\vp=\vp_*+\cJ(W_*)$ satisfies
\be
\label{prop6state} W_0 := \vec{U}_0 - (\Phi(\vp)+\Phi_c(\vp,\vp)) \in X_{\vec{p}},
\ee
for $X_{\vec{p}}$ defined in \eqref{Xvp}.  Moreover, if $W_*:=\vU_0-(\Phi(\vp_*)+\Phi_c(\vp_*,\tp)) \in X_{\tp}$ for some $\tp \in \cK_\delta$, then
\be
\label{mvtbnd} |\vec{p}-\vec{p}_*| \leq M_1\|W_*\|_X|\vec{p}_*-\tilde{p}|.
\ee
\end{proposition}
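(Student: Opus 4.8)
The plan is to realize the base point as the root of a finite-dimensional implicit equation and to appeal to a quantitative implicit function theorem. For $\vp$ in a small ball about $\vp_*$ set
\beq
\Pi(\vp\,):=\pi_\vp\bigl(\vU_0-\Phi(\vp\,)-\Phi_c(\vp\,,\vp\,)\bigr),
\eeq
which, after pairing against the adjoint basis $\cV^\dagger$, I regard as a map into $\bR^N$; by the definition \eqref{Xvp} of $X_\vp$, condition \eqref{prop6state} is exactly $\Pi(\vp\,)=0$. Since $\Phi_c(\cdot\,,\vp_*)\in X_{\vp_*}=\ker\pi_{\vp_*}$ by the construction \eqref{vecPhi1}, we have $\Pi(\vp_*)=\pi_{\vp_*}W_*$, so $|\Pi(\vp_*)|\le C\|W_*\|_X\le C\delta$. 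Next I would compute the Jacobian $D_\vp\Pi(\vp_*)$. Its principal part is $-\pi_{\vp_*}D_\vp\Phi(\vp_*)$; by \eqref{dxPhi1Linfty} one has $\partial_{p_k}\Phi=(0,-\phi_k')^{T}+O(\eps)$ in $X$, and $(0,-\phi_k')^{T}$ coincides, up to $O(\eps^r)$, with the $k$-th element of the basis \eqref{cV-exp} of $\cV$, so with the adjoint asymptotics of Lemma\,\ref{L:Psi-dag} this gives $\pi_{\vp_*}D_\vp\Phi(\vp_*)=-I+O(\eps)$ in the chosen coordinates, hence $D_\vp\Pi(\vp_*)=I+O(\eps^{1-\alpha}+\delta)$. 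Here the remaining contributions — the term in which $D_\vp$ falls on $\pi_\vp$ and acts on $\vU_0-\Phi-\Phi_c$, which equals $W_*=O(\delta)$ at $\vp_*$, and the term $-\pi_\vp D_\vp\Phi_c$, which is small by the residual bound \eqref{3lemma2} and the resolvent estimates of Section 5 — have been absorbed. Thus $D_\vp\Pi(\vp_*)$ is boundedly invertible, uniformly over $\cK$, once $\eps$ and $\delta$ are small.

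Given this, the smooth dependence of $\pi_\vp$, $\Phi$, and $\Phi_c$ on $\vp$ established in Sections 2--4, together with the implicit function theorem (equivalently, a uniform contraction on a $\delta$-ball), produce a smooth map $\cJ$ with $\cJ(0)=0$ for which $\vp:=\vp_*+\cJ(W_*)$ solves $\Pi(\vp\,)=0$ and
\beq
|\vp-\vp_*|\le C\,|\Pi(\vp_*)|\le C\|W_*\|_X\,;
\eeq
for $\delta$ small relative to $\dist(\vp_*,\partial\cK)$ this $\vp$ lies in $\cK$. This yields \eqref{prop6state}.

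For the refined bound \eqref{mvtbnd} I would retain $|\vp-\vp_*|\le C|\Pi(\vp_*)|$ and re-expand $\Pi(\vp_*)$ as a difference of spectral projections. Under the additional hypothesis $\pi_{\tp}W_*=0$, and since $\Phi_c(\cdot\,,\tp)\in X_{\tp}$ by \eqref{vecPhi1} also $\pi_{\tp}\Phi_c(\vp_*,\tp)=0$, so that $\pi_{\tp}\bigl(\vU_0-\Phi(\vp_*)\bigr)=0$; combining with $\pi_{\vp_*}\Phi_c(\vp_*,\vp_*)=0$ gives
\beq
\Pi(\vp_*)=(\pi_{\vp_*}-\pi_{\tp})\bigl(\vU_0-\Phi(\vp_*)\bigr)=(\pi_{\vp_*}-\pi_{\tp})\bigl(W_*+\Phi_c(\vp_*,\tp)\bigr).
\eeq
The explicit form \eqref{pi-def} of $\pi_\vp$, the asymptotics \eqref{cV-exp} and Lemma\,\ref{L:Psi-dag}, and the uniform equivalence over $\cK$ of the locally weighted norms $\|\cdot\|_X$ give the Lipschitz bound $\|\pi_{\vp_*}-\pi_{\tp}\|_{X\to X}\le C|\vp_*-\tp|$, whence $|\Pi(\vp_*)|\le C|\vp_*-\tp|\bigl(\|W_*\|_X+\|\Phi_c(\vp_*,\tp)\|_X\bigr)$. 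The $W_*$-term already has the product form of \eqref{mvtbnd}. For the $\Phi_c$-term I would use $\pi_{\tp}\Phi_c(\vp_*,\tp)=0$ and $\pi_{\vp_*}\Phi_c(\vp_*,\vp_*)=0$ to write $(\pi_{\vp_*}-\pi_{\tp})\Phi_c(\vp_*,\tp)=\pi_{\vp_*}\bigl(\Phi_c(\vp_*,\tp)-\Phi_c(\vp_*,\vp_*)\bigr)$, and then the smooth and uniformly small dependence of $\Phi_c$ on its frozen argument (from \eqref{3lemma2} and the resolvent bounds of Section 5) bounds this difference by $C\eps^{1-\alpha}|\vp_*-\tp|$, which is in turn $\le M_1\|W_*\|_X|\vp_*-\tp|$ in the parameter range in which \eqref{mvtbnd} is invoked — where, consistently with the floor in \eqref{Main-bound}, $\|W_*\|_X$ is at least of order $\eps^{1-\alpha}$. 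Collecting these estimates yields \eqref{mvtbnd}.

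The first two paragraphs are routine given the estimates of Sections 2--5. The main obstacle is the last step: establishing $\|\pi_{\vp_*}-\pi_{\tp}\|_{X\to X}\lesssim|\vp_*-\tp|$ uniformly in the singular, locally $\eps$-weighted norm $\|\cdot\|_X$ (whose partition of unity, windows and weights all move with the pulse configuration), and — in order to reach the clean product form \eqref{mvtbnd} rather than an extra additive $O(\eps^{1-\alpha}|\vp_*-\tp|)$ — tracking the precise size of the frozen-argument dependence of $\Phi_c$ against the scales already built into \eqref{Main-bound}.
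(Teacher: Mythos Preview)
Your argument tracks the paper's closely for the first half: both set up a finite-dimensional implicit equation, both compute the Jacobian via the asymptotics $\partial_{p_k}\Phi_2=-\phi_k'+O(\eps^{1+\alpha/2})$ and $\Psi_{j,2}^\dag=\phi_j'+O(\eps^{1+\alpha/2})$ from \eqref{dxPhi1Linfty} and Lemma~\ref{L:Psi-dag}, and both invoke the implicit function theorem. The paper phrases it as $\Lambda(\vp,W_*)=0$ with $\Lambda(\vp_*,0)=0$, rather than as $\Pi(\vp)=0$ with small $\Pi(\vp_*)$, but this is cosmetic.

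For the refined bound \eqref{mvtbnd} the paper takes a route that sidesteps your main self-identified obstacle. Rather than writing $\Pi(\vp_*)=(\pi_{\vp_*}-\pi_{\tp})\bigl(W_*+\Phi_c(\vp_*,\tp)\bigr)$ and then needing the operator-norm Lipschitz bound $\|\pi_{\vp_*}-\pi_{\tp}\|_{X\to X}\le C|\vp_*-\tp|$ in the moving, locally-weighted $X$-norm, the paper keeps the equation in the form of inner products against the adjoint eigenfunctions at the \emph{new} base point $\vp$, and then uses $W_*\in X_{\tp}$ to write
\[
(W_*,\Psi_j^\dag(\vp))_{L^2}=(W_*,\Psi_j^\dag(\vp)-\Psi_j^\dag(\tp))_{L^2}=:\,-\Xi_j.
\]
The bound $|\Xi|\le C\|W_*\|_X|\vp-\tp|$ then follows from componentwise $L^1$--$L^\infty$ and $L^2$--$L^2$ pairings together with Lipschitz control of the individual functions $\Psi_{j,1}^\dag$, $\Psi_{j,2}^\dag$ in $\vp$, which comes directly from the explicit formulae \eqref{psi2dag}--\eqref{psi1dag} behind Lemma~\ref{L:Psi-dag}. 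No uniform $X\to X$ bound on the difference of projections is needed.

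Your handling of the $\Phi_c$ contribution is where your argument actually weakens. You end with an additive term $C\eps^{1-\alpha}|\vp_*-\tp|$ and absorb it by invoking $\|W_*\|_X\gtrsim\eps^{1-\alpha}$, which is not a hypothesis of the proposition. In the paper's arrangement the $\Phi_c$ term sits on the left-hand side; evaluating at $\vp=\vp_*$ and using $\Phi_c(\vp_*,\tp)\in X_{\tp}$ gives the same mechanism, $(\Phi_c(\vp_*,\tp),\Psi_j^\dag(\vp_*)-\Psi_j^\dag(\tp))$, which again only needs Lipschitz of $\Psi_j^\dag$ and the $\|\Phi_c\|_X$ bound \eqref{vecPhi_1}. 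The paper treats this as higher order in its ``leading order relation''; whether one regards that as fully rigorous or not, it does not require the extra lower bound on $\|W_*\|_X$ that your route forces you to import.
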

\begin{proof}  We may write $\vU_0=\Phi(\vp_*)+\Phi_c(\vp_*,\vp_*)+W_*$ and $\vU_0=\Phi(\vp)+\Phi_c(\vp,\vp)+W_0,$ and hence
\be
W_0 = W_* + \Phi(\vp_*)+\Phi_c(\vp_*,\vp_*)-\left(\Phi(\vp)+\Phi_c(\vp,\vp)\right).
\ee
Since by \eqref{vecPhi1}, $\pi_\vp \Phi_c(\vp,\vp)=0$, the equation \eqref{prop6state} is equivalent to 
solving the nonlinear system 
\beq \label{Lam-sys}
\Lambda_j(\vp,W_*;\vp_*):=\left(W_*+ \Phi(\vp_*)+\Phi_c(\vp_*,\vp_*)-\Phi(\vp),\Psi_j^{\dag}(\vp)\right)_{L^2}=0.
\ee
Writing $\Lambda=(\Lambda_1,\cdots, \Lambda_N)^T$, it is clear that $\Lambda(\vec{p}_*,0)=\vec{0}.$ In light of \eqref{Phi1Linfty} and \eqref{vecPhi_1} 
the function $\Lambda$ is smooth in $\vp, \vp_*$ and $W_*$. We examine the following gradient
\be
\label{gradientmat} \left[\nabla_{\vp}\Lambda|_{(\vec{p}=\vec{p}_*,W_*=0)}\right]_{ij}=-\left( \Frac{\partial \Phi(\vp)}{\partial p_i},\Psi_j^{\dag}(\vp)\right)_{L^2}.
\ee
We evaluate the right-hand side using \eqref{Phi1Linfty}, \eqref{dxPhi1Linfty}, and \eqref{adjointest}, and obtain
\be
\label{gradientmat2} \nabla_{\vec{p}}\Lambda|_{\vec{p}=\vec{p}_*,W_*=0}=-\|\phi'\|^2_{L^2}Q^{-2\alpha_{21}/(\alpha_{22}-1)}+O(\eps),
\ee
where $Q$ is the diagonal matrix of $\vq$ amplitudes, whose determinate is bounded away from zero for $\vp\in\cK$.
The existence of $\cJ$ verifying \eqref{prop6state} follows from the implicit function theorem.
If, in addition, we have $W_*:=\vU_0-(\Phi(\vp_*)+\Phi_c(\vp_*,\tp)) \in X_{\tp}$, then we may re-write \eqref{Lam-sys} as
\beq
\left( \Phi(\vp_*)+\Phi_c(\vp_*,\tp)-\Phi(\vp),\Psi_j^{\dag}(\vp)\right)_{L^2}=-\left(W_*,\Psi_j^\dag(\vp)\right)_{L^2}=
-\left(W_*,\Psi_j^\dag(\vp)-\Psi_j^\dag(\tp)\right)_{L^2}=:\Xi_j,
\eeq
where the last equality follows from the assumption $W_*\in X_{\tp}.$ We use \eqref{gradientmat2} to Taylor expand the left-hand side, obtaining
the leading order relation
\beq 
\vp-\vp_* = -\Frac{1}{\|\phi^\prime\|^2_{L^2}} Q^{2\alpha_{21}/(\alpha_{22}-1)} \Xi + O(|\vp-\vp_*|^2).
\eeq
However, we readily see from Lemma \ref{L:Psi-dag} that $|\Xi| \leq C \|W_*\|_X|\vp-\tp|$, which establishes \eqref{mvtbnd}.
\hfill \end{proof}

\subsection{Projected equations}
We emphasize that the pulse configuration $\vp=\vp(t)$ in $\Phi=\Phi(\cdot,\vp(t))$ is {\em not} frozen, but evolves freely with $t$.  The 
separation of the pulse configuration into fast and slow (frozen) variables requires the introduction of the secular operator, 
$\Delta L := L_{\vp} - \tL_{\vp_0}$, which induces a  temporally growing forcing term as $\vp=\vp(t)$ evolves away from $\vp_0$. 
Eventually the secular term, $\vp(t)-\vp_0$, grows too large, and the equations are re-projected, using Proposition\,\ref{P:basepoint} to update the frozen
base-point $\vp_0$ to $\vp_1$ and $W_0\in X_{\vp_0}$ to $W_1\in X_{\vp_1}$.  In the sequel, we show that the long-time evolution of both pulse 
configuration and  remainder is well described by the collection of initial data $\{\vp_n\}_{n=0}^\infty$ and $\{W_n\}_{n=0}^\infty.$ 

As the first step in this process, we rewrite \eqref{decompU} as
\begin{align}
\label{decomp2U}  W_t +\left(\nabla_\vp\,  \Phi +\nabla_\vp\,  \Phi_c\right)  \cdot\dot{\vp}&=\pi_{\vp_0}\cF(\Phi)+\tL_{\vp_0}W +\Delta L \left({\Phi}_c+W\right)+\cNon(\Phi_c+W),\\
W(x,0)&=W_0,
\end{align}
where $\vp_0$ and $W_0\in X_{\vp_0}$ are as determined in Proposition\,\ref{P:basepoint}. Moreover, following classical modulation theory, we choose
 the evolution of $\vp=\vp(t)$ to enforce the condition $W \in X_{\vec{p}_0}$. To extract the evolution of the pulse configuration, we
project \eqref{decomp2U} onto the small eigenvalue adjoint eigenspace, $\{\Psi_j^\dag(\cdot;\vp_0)\}_{j=1}^N$, whose $L^2$ orthogonal complement coincides with 
the kernel of $\pi_{\vp_0}$. Since $\pi_{\vp_0} \tL_{\vp_0}W=\tL_{\vp_0}\pi_{\vp_0}W=0$,
we attain the fast pulse evolution
\beq
\label{vp-evol}
\begin{aligned}
  M \dot \vp &= g(\vp\,;\vp_0,W),\\
  \vp\,(t_0) &=\vp_0,
\end{aligned}
\eeq
where the $N\times N$ matrix $M$ takes the form
\beq
 M_{ij} := \left(\frac{\partial \Phi}{\partial p_i}+\frac{\partial\Phi_c}{\partial p_i}, \Psi_j^\dag(\vp_0)\right)_{L^2}, \label{M-def}
\eeq
and the forcing term $g\in \bR^N$ is given by
\vskip -0.25in
\beq
  g_j:=\Bigl(\cF\bigl(\Phi(\vp\,)\bigr)+\Delta L \left(\Phi_c+W\right)+\cNon(\Phi_c+W),\Psi_j^{\dag}(\vp_0)\Bigr)_{L^2}=\Bigl({\cF(\Phi+W^*)},\Psi_j^{\dag}(\vp_0)\Bigr)_{L^2}.\label{g-def}
\eeq
The evolution of the remainder $W$ is obtained by applying the complementary spectral projection, $\tpi_{\vp_0}$,
to  \eqref{decomp2U}, which yields

\beq
\label{decomp3U}
\begin{aligned}
W_t &= R + \tL_{\vp_0}W+\tilde{\pi}_{\vec{p}_0}\left(\Delta L \left(\Phi_c+W\right)+\cNon(\Phi_c+W)\right),\\
W(x,0) &= W_0,
\end{aligned}
\eeq
where we have introduced the temporal component of the residual,
\beq\label{temp-resid}
R(\dot\vp\,; \vp_0 ):= -\tpi_{\vp_0}\left(\nabla_\vp\,\Phi+\nabla_\vp\,\Phi_c\right)\cdot \dot{\vp}.
\eeq

The central goal of the renormalization approach is to control the growth of the remainder $W$ and the secularity, $|\vp(t)-\vp_0|$, in systems \eqref{vp-evol} and \eqref{decomp3U} in terms of the quantities
\begin{align}
\label{T1} T_W(t)&:= \underset{t_0 <s < t}{\sup}e^{\epsilon^{\alpha}\nu (s-t_0)}\|W(s)\|_X, \\
\label{T2} T_\vp(t)&:= \underset{t_0 <s < t}{\sup}|\vec{p}(s)-\vec{p}_0|.
\end{align}

\subsection{Bounds on Pulse Dynamics}
The following lemma verifies that the matrix $M$ is uniformly invertible.
\begin{lemma}
The matrix $M$ defined in \eqref{M-def} has the following asymptotic form
\beq \label{M-asymp}
  M =-\|\phi_0'\|^2_{L^2}Q^{\frac{-2\alpha_{21}}{\alpha_{22}-1}}+O(\epsilon^{1-\alpha/2}),
 \eeq
 where $Q$ is the diagonal matrix of amplitudes $\vq$.
 \end{lemma}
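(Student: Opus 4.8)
The plan is to split the defining pairing \eqref{M-def} as
$M_{ij}=\bigl(\partial_{p_i}\Phi,\Psi_j^\dagger(\vp_0)\bigr)_{L^2}+\bigl(\partial_{p_i}\Phi_c,\Psi_j^\dagger(\vp_0)\bigr)_{L^2}$,
show that the first term reproduces the claimed diagonal matrix at leading order, and show that the second term is $O(\eps^{1-\alpha/2})$ and is in fact the term that sets the error in \eqref{M-asymp}. For concreteness I would carry out the estimates at $\vp=\vp_0$ (which is the configuration at which $M$ is used, since $\vp(t_0)=\vp_0$) and remark that uniformity for $\vp$ in the secular neighborhood of $\vp_0$ follows from the smooth $\vp$-dependence of $\Phi,\Phi_c,\vq$ together with $\partial_{p_i}q_j=O(\eps^{1+\alpha/2})$.

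For the first term, the dominant contribution is in the second components. By \eqref{dxPhi1Linfty} we have $\partial_{p_i}\Phi_2=-\phi_i'+O(\eps^{1+\alpha/2})$ in $H^1$, and by Lemma \ref{L:Psi-dag} we have $\Psi_{2,j}^\dagger=\phi_j'+O(\eps^{1+\alpha/2})$ in $H^1$ with $\|\Psi_{2,j}^\dagger\|_{L^2}=O(1)$, so $\bigl(\partial_{p_i}\Phi_2,\Psi_{2,j}^\dagger\bigr)_{L^2}=-(\phi_i',\phi_j')_{L^2}+O(\eps^{1+\alpha/2})$. For $i\neq j$ these inner products are $O(\eps^r)$ by the $O(1)$ exponential decay of $\phi_0'$ and the minimal separation in $\cK_\ell$, recall \eqref{cKell-def}; on the diagonal the rescaling \eqref{phij-phi0} gives $(\phi_i',\phi_i')_{L^2}=q_i^{-2\alpha_{21}/(\alpha_{22}-1)}\|\phi_0'\|_{L^2}^2$. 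The cross term $\bigl(\partial_{p_i}\Phi_1,\Psi_{1,j}^\dagger\bigr)_{L^2}$ is higher order: $\|\partial_{p_i}\Phi_1\|_{L^1}=O(\eps^{-\alpha/2})$ by \eqref{Phi1Linfty}, while $\|\Psi_{1,j}^\dagger\|_{L^\infty}\leq\|\Psi_{1,j}^\dagger\|_{W^{1,1}_\xi}=O(\eps^2)$ by Lemma \ref{L:Psi-dag}, giving $O(\eps^{2-\alpha/2})$. Hence $\bigl(\partial_{p_i}\Phi,\Psi_j^\dagger\bigr)_{L^2}=-\delta_{ij}q_i^{-2\alpha_{21}/(\alpha_{22}-1)}\|\phi_0'\|_{L^2}^2+O(\eps^{1+\alpha/2})$, i.e. $-\|\phi_0'\|_{L^2}^2Q^{-2\alpha_{21}/(\alpha_{22}-1)}$ up to $O(\eps^{1+\alpha/2})$.

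For the correction term the key estimate is $\|\nabla_\vp\Phi_c\|_X=O(\eps^{1-\alpha/2})$. Differentiating \eqref{vecPhi1}, and using that $\tL_{\vp_0}$ and $\tpi_{\vp_0}$ do not depend on $\vp$, we get $\partial_{p_i}\Phi_c=-\tL_{\vp_0}^{-1}\tpi_{\vp_0}\,\partial_{p_i}\cF(\Phi(\vp))\in X_{\vp_0}$. Apply the resolvent bound \eqref{propres1} at $\lambda=0$, where $k_0=\eps^{1+\alpha/2}\sqrt\mu$ so the prefactor is $\eps/\Re(k_0)=\eps^{-\alpha/2}/\sqrt\mu$, together with the residual-gradient estimates \eqref{3lemma2} — which give $\|\nabla_\vp\cF_1(\Phi)\|_{L^1}=O(1)$ and $\|\nabla_\vp\cF_2(\Phi)\|_{L^1_{\oalpha,\vp}}\leq C\|\nabla_\vp\cF_2(\Phi)\|_{L^2}=O(\eps)$, the first step using that $\cF_2(\Phi)$ (and its $\vp$-derivatives) are exponentially localized about the pulses by \eqref{resasymp}. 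Since the projection $\tpi_{\vp_0}$ only modifies the forcing at $O(\eps)$ (its range-$N$ part picks out $O(\eps)$ coefficients, cf. \eqref{cV-exp}), this yields $\|\partial_{p_i}\Phi_c\|_X\leq C\eps^{-\alpha/2}\bigl(\eps\cdot O(1)+O(\eps)\bigr)=O(\eps^{1-\alpha/2})$. Pairing against $\Psi_j^\dagger(\vp_0)$, the second-component pairing is $\leq\|\partial_{p_i}\Phi_{c,2}\|_{L^2}\|\Psi_{2,j}^\dagger\|_{L^2}=O(\eps^{1-\alpha/2})$, and the first-component pairing is $\leq\|\partial_{p_i}\Phi_{c,1}\|_{L^1}\|\Psi_{1,j}^\dagger\|_{L^\infty}=O(\eps^{-\alpha})\cdot O(\eps^2)=O(\eps^{2-\alpha})$, using the $L^1$-smoothing bound for $L_{11}^{-e}$ on the first component. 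Collecting the two terms gives $M=-\|\phi_0'\|_{L^2}^2Q^{-2\alpha_{21}/(\alpha_{22}-1)}+O(\eps^{1-\alpha/2})$, the error dominated by $\Phi_c$. I expect the main obstacle to be precisely this $\nabla_\vp\Phi_c$ bound: one must see that although $\tL_{\vp_0}^{-1}$ carries a dangerous $\eps^{-\alpha/2}$ coming from the essential spectrum crowding the origin, it acts on a residual gradient that is $O(\eps)$ in the weighted norms and the loss is exactly one power of $\eps^{-\alpha/2}$ (the content of Proposition \ref{prop-resolv} at $\lambda=0$), so the product lands at $O(\eps^{1-\alpha/2})$ and not lower.
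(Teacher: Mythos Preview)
Your proposal is correct and follows essentially the same approach as the paper. The paper's proof is terser: it cites the forward-referenced technical estimate \eqref{vecPhi_1} (i.e., $\|\nabla_\vp\Phi_c\|_X\leq C\eps^{1-\alpha/2}$, proved in Section 7) rather than deriving it inline, and it does not spell out the first-component pairing $(\partial_{p_i}\Phi_1,\Psi_{1,j}^\dagger)_{L^2}=O(\eps^{2-\alpha/2})$ explicitly, but the ingredients and logic---adjoint estimate \eqref{adjointest}, the $\partial_{p_i}\Phi_2$ bound from \eqref{dxPhi1Linfty}, the rescaling \eqref{phij-phi0}, and the $\nabla_\vp\Phi_c$ bound via \eqref{propres1} at $\lambda=0$ combined with \eqref{3lemma2}---are identical.
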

\begin{proof}
 From \eqref{adjointest} we see that $\Psi_j^\dag$ is dominated by its second component, which satisfies 
 $\Psi_{2j}^\dag=\phi_j^\prime+O(\eps^{1+\alpha/2})$. Similarly, from \eqref{dxPhi1Linfty} we see that 
 $\frac{\partial\Phi_2}{\partial p_j} = -\phi_j^\prime +O(\eps^{1+\alpha/2}).$ From \eqref{vecPhi_1} we have the bound
 $\|\nabla_\vp\, \Phi_c\|_X\leq C\eps^{1-\alpha/2},$ which shows the contribution from this term is higher order. Recalling the scaling \eqref{phij-phi0}
 yields \eqref{M-asymp}.
\hfill\end{proof}

The lemma below establishes an upper bound on the rate of pulse motion.
\begin{lemma}
The pulse evolution, given by \eqref{vp-evol} satisfies the following bound
\beq\label{vp-dot-bd}
|\dot{\vp}\,|\leq C \left( \eps+ (\eps+T_\vp(t)) (\|W(s)\|_X+\eps^{1-\alpha/2})+ \|W(s)\|_X^2+\eps^{2-\alpha}\right).
\eeq
\end{lemma}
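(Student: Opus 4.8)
The plan is to start from the fast-pulse evolution equation \eqref{vp-evol}, $M\dot\vp = g(\vp;\vp_0,W)$, and use the invertibility of $M$ established in the previous lemma to write $\dot\vp = M^{-1}g$. Since $M = -\|\phi_0'\|^2_{L^2}Q^{-2\alpha_{21}/(\alpha_{22}-1)}+O(\eps^{1-\alpha/2})$ with $\det Q$ uniformly bounded away from zero on $\cK$, we have $|M^{-1}|\leq C$ uniformly, so it suffices to bound $|g|$ by the right-hand side of \eqref{vp-dot-bd}. Thus the whole problem reduces to estimating the four pieces of $g_j$ as given in \eqref{g-def}: the residual term $\bigl(\cF(\Phi(\vp)),\Psi_j^\dag(\vp_0)\bigr)_{L^2}$, the secular term $\bigl(\Delta L(\Phi_c+W),\Psi_j^\dag(\vp_0)\bigr)_{L^2}$, and the nonlinear term $\bigl(\cNon(\Phi_c+W),\Psi_j^\dag(\vp_0)\bigr)_{L^2}$.

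For the residual term I would use the residual estimates \eqref{3lemma2} together with the adjoint eigenfunction asymptotics of Lemma \ref{L:Psi-dag}. Writing the $L^2$ pairing component-wise, the first component pairs $\cF_1(\Phi)$, which is $O(\eps^{-1})$ in $L^1$ by \eqref{3lemma2}, against $\Psi^\dag_{1,k}$, which is $O(\eps^2)$ in $W^{1,1}_\xi$ (hence controls $L^\infty$), giving $O(\eps)$ after the $\eps^{-1}\cdot\eps^2$ cancellation; the second component pairs $\cF_2(\Phi)$, which is $O(\eps)$ in $L^2$, against $\Psi^\dag_{2,k}=\phi_k' + O(\eps^{1-\alpha/2}\cdot\eps^2)$, which is $O(1)$ in $L^2$, again $O(\eps)$. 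This yields the leading $C\eps$ term. The secular term $\Delta L = L_\vp - \tL_{\vp_0}$ splits into $(L_\vp - \tL_\vp) + (\tL_\vp - \tL_{\vp_0})$; the first difference is the finite-rank approximation error, which is large pointwise but has zero mass in each window and so is controlled via the small-mass estimates using that $\Psi^\dag(\vp_0)$ is smooth and localized, contributing $O(\eps)$ times $\|\Phi_c+W\|_X$; the second difference is proportional to $|\vp - \vp_0|\leq T_\vp(t)$ by smooth dependence of the potentials on $\vp$, contributing $O(T_\vp(t))\cdot\|\Phi_c+W\|_X$. Since $\|\Phi_c\|_X = O(\eps^{1-\alpha/2})$ by \eqref{vecPhi_1} and $\|W\|_X$ is the quantity under control, this gives the $(\eps+T_\vp(t))(\|W\|_X + \eps^{1-\alpha/2})$ contribution. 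The nonlinear term is estimated by the quadratic bound on $\cNon$, as in \eqref{nonlinestimate}/\eqref{mega-bound}: $\|\cNon(\Phi_c+W)\|$ in the relevant dual norm is $O(\|\Phi_c+W\|_X^2) = O(\eps^{2-\alpha} + \|W\|_X^2)$ after expanding the square and absorbing the cross term, paired against the $O(1)$ adjoint eigenfunction.

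I would then collect the three contributions: $C\eps$ from the residual, $C(\eps+T_\vp(t))(\|W\|_X+\eps^{1-\alpha/2})$ from the secular term, and $C(\eps^{2-\alpha}+\|W\|_X^2)$ from the nonlinearity, which together with $|M^{-1}|\leq C$ gives exactly \eqref{vp-dot-bd}. The main obstacle is the secular term's finite-rank-difference piece $L_\vp - \tL_\vp$: this difference is not small in operator norm, and the bound relies crucially on pairing against the smooth, exponentially localized adjoint eigenfunction $\Psi^\dag(\vp_0)$ and exploiting that $L_\vp - \tL_\vp$ produces functions with vanishing mass in each $\vp$-window, so that the singular structure of $L_{11}^{-e}$ (via the small-mass estimates \eqref{5lemma1}--\eqref{6lemma1}) supplies the missing power of $\eps$. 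Care is also needed because $\Psi^\dag(\vp_0)$ is centered at $\vp_0$ while the windowing partition in $\Delta L$ may be built around $\vp$, so one must control the mismatch using $|\vp-\vp_0|\leq T_\vp(t)$ and the exponential tails; this is where the extra $T_\vp(t)$ factor multiplying $\|W\|_X+\eps^{1-\alpha/2}$ enters.
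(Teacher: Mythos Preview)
Your proposal is correct and follows essentially the same strategy as the paper: invert $M$ via \eqref{M-asymp}, then estimate the three pieces of $g_j$ separately using the residual bounds \eqref{3lemma2}, the $\Delta L$ bounds of Lemma~\ref{L:DeltaL}, the nonlinear bound \eqref{nonlinestimate}, and the adjoint asymptotics \eqref{adjointest}, all combined with $\|\Phi_c\|_X=O(\eps^{1-\alpha/2})$ from \eqref{vecPhi_1}.

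Two minor points of divergence are worth flagging. First, the paper splits $\Delta L$ as $\Delta_sL+\Delta_rL=(L_\vp-L_{\vp_0})+(L_{\vp_0}-\tL_{\vp_0})$ rather than your $(L_\vp-\tL_\vp)+(\tL_\vp-\tL_{\vp_0})$; both work, but the paper's choice matches the estimates \eqref{DeltaLsW}--\eqref{DeltaLrW} directly and avoids introducing $\tL_\vp$. Second, your invocation of the small-mass resolvent estimates \eqref{5lemma1}--\eqref{6lemma1} for the reductive piece is not the operative mechanism here: those estimates concern $(L_{11}^e+\lambda)^{-1}$ and are used for the semigroup bounds in \eqref{semigroup3}, not for this inner product. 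In the present lemma the extra $\eps$ on the reductive piece comes simply from pairing $[\Delta_rL\,V]_1$, which is $O(\eps^{-1}\|V\|_X)$ in $L^1$ by \eqref{DeltaLrW}, against $\Psi_{j,1}^\dag$, which is $O(\eps^2)$ in $L^\infty$ by \eqref{adjointest}. The zero-mass property \eqref{DeltaLrmass} is not needed at this step.
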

\begin{proof}
From \eqref{M-asymp} the matrix $M$ is boundedly invertible. We apply $L^1-L^\infty$ estimates to the first component of $g$ and $L^2-L^2$ estimates to the 
second component,
\beq
 |\dot{\vp}_j\,| \leq C \left( \|\cF_1(\Phi+W^*)\|_{L^1} \|\Psi_{j,1}^\dag\|_{L^\infty}+ \|\cF_2(\Phi+W^*)\|_{L^2} \|\Psi_{j,2}^\dag\|_{L^2}\right),
 \eeq
for $\cF(\Phi+W^*)=\left(\cF_1(\Phi+W^*),\cF_2(\Phi+W^*)\right)^T$ defined in \eqref{g-def}. From \eqref{adjointest} we have  $\|\Psi^\dag_{j,1}\|_{L^\infty}\leq C\eps^2$ while $\|\Psi_{j,2}^\dag\|_{L^2} = O(1)$.
  Combining these estimates with \eqref{3lemma2}, \eqref{DeltaLsW}, \eqref{DeltaLrW}, \eqref{nonlinestimate}, and the bound $\|W^*\|_X\leq C(\|W\|_X+\eps^{1-\alpha/2})$
afforded by \eqref{vecPhi_1} yields \eqref{vp-dot-bd}. \hfill
\end{proof}

\subsection{Decay of the Remainder}
The following proposition establishes uniform estimates on the decay of $\|W\|_X$ over the duration, $\Delta t$,
that the linearized operator $\tL=\tL_{\vp_0}$ is fixed at $\vp_0$. 
It is convenient to introduce the decay factor
\beq\label{beta-def}
\beta(\Delta t) := e^{-\eps^{\alpha} \nu \Delta t},
\eeq
which bounds the action of the semi-group $S_{\vp_0}(\Delta t)$ on $X_{\vp_0}$.
\begin{proposition}
\label{P:W-decay}
Fix $\eps_0>0$ sufficiently small and let the normal hyperbolicity condition, $0<\alpha<\frac12,$ hold. There exist constants $C_1,C_2>0$ such that
for all $0<\eps\leq \eps_0$ and all initial data $W_0\in X_{\vp_0}$ satisfying
\beq  \|W(t_0)\|_X \leq C_1\frac{ \eps^{\alpha}}{|\ln\eps|},\label{W0-cond}
\eeq
then the solution $W$ to \eqref{decomp3U} satisfies
\beq
 \|W(t)\|_X \leq C_2 \left( \|W(t_0)\|_X |\ln \eps|e^{-\eps^{\alpha}\nu(t-t_0)} +\eps^{1-\alpha}\right), 
 \label{W-decay1}
 \eeq
 for all $t\in[t_0,t_0+\Delta t]$ for any $\Delta t$ for which the decay factor meets
 \beq
  \beta(\Delta t) \geq C_1 \eps^{1-\frac32\alpha} .\label{Dt-cond}
 \eeq
 In particular, we may choose $\Delta t$ so that at $t_1:=t_0+\Delta t$ we have
 \begin{eqnarray}
   \label{W-decay2}
  \|W(t_1)\|_X &\leq& C_2 \left( \|W(t_0)\|_X|\ln\eps|\eps^{1-3\alpha/2} +\eps^{1-\alpha}\right),\\
  |\vp(t_1)-\vp_0|& \leq & C_2 |\ln\eps|\eps^{1-\alpha}.\label{vp-gap}
 \end{eqnarray}
\end{proposition}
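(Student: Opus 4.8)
The plan is to write a Duhamel (variation-of-parameters) representation of the remainder equation \eqref{decomp3U} on the fixed-base-point interval $[t_0,t_0+\Delta t]$ and close it with a continuity bootstrap in the control quantities $T_W(t)$ and $T_\vp(t)$ of \eqref{T1}--\eqref{T2}. Since $\dot\vp$ in \eqref{vp-evol} is chosen precisely so that $W(t)$ stays in $X_{\vp_0}$, and $R$ and each $\tpi_{\vp_0}(\cdots)$ term lie in $X_{\vp_0}$, the semi-group $S_{\vp_0}$ of Section 5 applies and
\[
  W(t)=S_{\vp_0}(t-t_0)W_0+\int_{t_0}^{t}S_{\vp_0}(t-s)\Bigl[R+\tpi_{\vp_0}\bigl(\Delta L(\Phi_c+W)+\cNon(\Phi_c+W)\bigr)\Bigr](s)\,ds,
\]
with $\Delta L:=L_\vp-\tL_{\vp_0}$ and $R$ the temporal residual \eqref{temp-resid}. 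First I would posit, on the maximal subinterval on which they hold, bootstrap bounds of the form $T_W(t)\le 2C_2\bigl(|\ln\eps|\,\|W_0\|_X+\eps^{1-\alpha}\bigr)$ and $T_\vp(t)\le 2C_2|\ln\eps|\eps^{1-\alpha}$, keeping $\|W_0\|_X$ subject to \eqref{W0-cond} so that $\|W\|_X$ and $|\vp-\vp_0|$ are small enough for the pulse bound \eqref{vp-dot-bd} to hold; the aim is to recover the same estimates with $C_2$ in place of $2C_2$.

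Next I would estimate the four contributions in the $\|\cdot\|_X$ norm. The initial term satisfies $\|S_{\vp_0}(t-t_0)W_0\|_X\le Ce^{-\eps^\alpha\nu(t-t_0)}|\ln\eps|\,\|W_0\|_X$ by \eqref{semigroup2}, the source of the $|\ln\eps|$ in \eqref{W-decay1}. For $R=-\tpi_{\vp_0}(\nabla_\vp\Phi+\nabla_\vp\Phi_c)\cdot\dot\vp$, the pulse bound \eqref{vp-dot-bd} gives $|\dot\vp|=O(\eps)$ under the bootstrap hypotheses, and since the first component of $\nabla_\vp\Phi$ has small coarse-grained mass in each window while $\|\nabla_\vp\Phi_c\|_X=O(\eps^{1-\alpha/2})$ (see \eqref{vecPhi1} and the Section 7 bounds on $\Phi_c$), the \emph{enhanced} semi-group estimate \eqref{semigroup3}, together with $\int_{t_0}^{t}e^{-\eps^\alpha\nu(t-s)}\,ds\le\eps^{-\alpha}/\nu$, yields an $O(\eps^{1-\alpha})$ term — the source of the $\eps^{1-\alpha}$ in \eqref{W-decay1}. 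The secular term $\tpi_{\vp_0}\Delta L(\Phi_c+W)$ is handled via the Section 7 estimates \eqref{DeltaLsW}--\eqref{DeltaLrW}: the piece $L_\vp-\tL_\vp$ is large in coefficient but has zero mass in each $\vp$-window, hence is absorbed by the singular smoothing of $L_{11}^{-e}$ recorded in \eqref{propres3} and \eqref{semigroup3}, while $\tL_\vp-\tL_{\vp_0}$ is $O(|\vp-\vp_0|)=O(T_\vp(t))$; the net contribution is $\le C\bigl(\eps+T_\vp(t)\bigr)\bigl(\Delta t\,T_W(t)\,e^{-\eps^\alpha\nu(t-t_0)}+\eps^{1-\alpha/2}\bigr)$. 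The nonlinearity is controlled by \eqref{nonlinestimate}: being quadratic in $\Phi_c+W$ with $\|\Phi_c\|_X=O(\eps^{1-\alpha/2})$, it contributes, after \eqref{semigroup3} and the time integral, a term which under \eqref{W0-cond} is of higher order than the targeted bound.

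I would then sum these and divide by $e^{-\eps^\alpha\nu(t-t_0)}$, producing
\[
  T_W(t)\le C\bigl(|\ln\eps|\,\|W_0\|_X+\eps^{1-\alpha}\bigr)+C\bigl(\eps+T_\vp(t)\bigr)\Delta t\;T_W(t)+\text{(higher order)}.
\]
By the bootstrap bound on $T_\vp$ and the fact that \eqref{Dt-cond} forces $\Delta t\le C\eps^{-\alpha}|\ln\eps|$, the coefficient of $T_W(t)$ on the right is $O(\eps^{1-2\alpha}|\ln\eps|^2)$; this is exactly where the normal hyperbolicity hypothesis $\alpha\in(0,\tfrac12)$ enters, as it makes that coefficient $<\tfrac12$ for $\eps$ small, so it can be absorbed on the left. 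This recovers \eqref{W-decay1}. A parallel computation for the pulse equation gives $|\vp(t)-\vp_0|\le\int_{t_0}^{t}|\dot\vp|\,ds\le C\eps(t-t_0)$, which improves the $T_\vp$ bound; a standard continuity argument then shows the bootstrap hypotheses persist on all of $[t_0,t_0+\Delta t]$. Choosing $\Delta t$ maximal subject to \eqref{Dt-cond}, so that $\beta(\Delta t)\approx C_1\eps^{1-3\alpha/2}$, and evaluating \eqref{W-decay1} and the pulse bound at $t_1=t_0+\Delta t$ yields \eqref{W-decay2} and \eqref{vp-gap}.

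The step I expect to be the main obstacle is the secular term $\Delta L(\Phi_c+W)$. A crude operator estimate is useless, since $\Delta L$ carries $O(\eps^{-1})$ potentials; one must exploit \emph{both} the finite-rank/zero-mass-in-each-window structure of $L_\vp-\tL_\vp$ from Section 4 (so that the $\eps^{-1}$ is killed by the $L_{11}^{-e}$ smoothing through the small-coarse-mass bounds \eqref{propres3} and \eqref{semigroup3}) \emph{and} the linear-in-$|\vp-\vp_0|$ smallness of $\tL_\vp-\tL_{\vp_0}$. The closely related difficulty is that convolving $S_{\vp_0}$ against a forcing decaying at the \emph{same} rate $\eps^\alpha\nu$ costs a factor $\Delta t$ in the time integral, so the coefficients multiplying $T_W(t)$ must be shown to beat $1/\Delta t\sim\eps^{\alpha}/|\ln\eps|$; closing this decay budget is precisely what pins down both the window restriction \eqref{Dt-cond} and the normal hyperbolicity condition $\alpha<\tfrac12$.
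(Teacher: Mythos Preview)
Your Duhamel-plus-bootstrap strategy is the paper's strategy, and you correctly identify the secular term $\Delta L(\Phi_c+W)$ as the delicate point and the source of the condition $\alpha<\tfrac12$. Two items, however, need correction.

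First, the assertion ``$|\dot\vp|=O(\eps)$ under the bootstrap hypotheses'' is false when $\|W_0\|_X$ is near its maximal allowed size $C_1\eps^\alpha/|\ln\eps|$. From \eqref{vp-dot-bd} one has $|\dot\vp|\le C(\eps+\|W\|_X^2+\cdots)$, and for $\alpha<\tfrac12$ the term $\|W\|_X^2=O(\eps^{2\alpha})$ \emph{dominates} $\eps$. You cannot simply pull $|\dot\vp|$ out of the time integral as a constant $O(\eps)$; you must keep the decaying factor $\|W(s)\|_X^2\le T_W^2(t)e^{-2\eps^\alpha\nu(s-t_0)}$ inside the integral. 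Doing so produces an $\eps^{-\alpha}T_W^2(t)$ contribution on the right of the $T_W$ inequality (the paper's \eqref{T1ineq}), which is \emph{not} higher order: it is exactly the term that forces $C_1$ in \eqref{W0-cond} to be small. The paper makes this explicit by solving the resulting quadratic $a_0-r+a_2 r^2$ with the requirement $a_0a_2\ll1$; your bootstrap can close only after you track this quadratic term and invoke the smallness of $C_1$, which you currently hide under ``(higher order)''.

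Second, the claim that the first component of $\nabla_\vp\Phi$ has small coarse-grained mass in each window, so that the enhanced estimate \eqref{semigroup3} applies to $R$, is not established anywhere and is not how the paper proceeds. The paper applies \eqref{semigroup1} to $R$ using the direct bound $\eps^{\alpha/2}\|R_1\|_{L^1}+\|R_2\|_{L^1_{\oalpha,\vp}}\le C|\dot\vp|$ (this is \eqref{lemma4}), which already gives $\|S(t-s)R\|_X\le Ce^{-\eps^\alpha\nu(t-s)}|\dot\vp|$ without any mass cancellation. You should use this route; it is simpler and does not require an unproven structural claim.

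With these two fixes your argument coincides with the paper's. The paper, rather than a continuity bootstrap, substitutes an explicit bound on $T_\vp$ in terms of $T_W$ (Lemma~\ref{L:T2}) into the $T_W$ inequality and solves the resulting quadratic; this is equivalent to your bootstrap but makes transparent exactly where each of \eqref{W0-cond}, \eqref{Dt-cond}, and $\alpha<\tfrac12$ enters.
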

\begin{proof}
Applying the variations of constants formula to \eqref{decomp3U}, we have
\be
\label{varcons} W(x,t) = S_{\vp_0}(t-t_0)W_0 + \int_{t_0}^t S_{\vp_0}(t-s)\Bigl(R+\tpi_{\vp_0}\bigl(\Delta L  (\Phi_c+W)+\cNon(\Phi_c+W)\bigr)\Bigr)ds,
\ee
where $S_{\vp_0}$ is the semi-group generated by $\tL_{\vp_0}$.  Applying \eqref{Phi1Linfty}, \eqref{dxPhi1Linfty}, and \eqref{vecPhi_1} to \eqref{temp-resid}, we find
\beq \eps^{\alpha/2}\|R_1\|_{L^1}+\|R_2\|_{L^{1}_{\oalpha,\vp\,}} \leq C|\dot{\vp}\,|.
\label{lemma4}
\eeq
  We take the $X$-norm of \eqref{varcons},  and apply the semi-group 
estimate \eqref{semigroup1} to $R$ and \eqref{semigroup2} to $W_0$ while using \eqref{semi11}-\eqref{semi13} and
\eqref{vecPhi_1}, on the remaining terms. Powers of $\|W\|_X$ appear from several terms, but
 the dominant contribution is from the temporal residual, $|\dot{\vp}|$, via \eqref{vp-dot-bd}, yielding
\beq
\|W(t)\|_X \leq C\left( e^{-\epsilon^{\alpha}\nu(t-t_0)}|\ln \eps|\|W(t_0)\|_X+ 
\int_{t_0}^t  e^{-\epsilon^{\alpha}\nu (t-s)} |\dot{\vp}\,|\, ds\right). \label{toto1}
\eeq
We evaluate \eqref{toto1} at $t=t'$, multiply by $e^{\epsilon^{\alpha}\nu (t'-t_0)}$,  use \eqref{vp-dot-bd} to
control $\dot{\vp}$ and bound $\|W(s)\|_X\leq e^{-\eps^{\alpha}\nu(s-t_0)}T_W(t)$ valid for $t>s$. Taking the $\sup$
over $t' \in (t_0,t)$, we obtain
\begin{eqnarray} 
T_W(t) &\leq &C|\ln \eps|T_W(t_0) +C\int_{t_0}^t \Bigl [e^{\eps^{\alpha}\nu(s-t_0)}(\eps+\eps^{2-\alpha})+ \nonumber\\
&& \hspace{0.5in} (\eps+T_\vp(t))\left(T_W(t)+\eps^{1-\alpha/2} e^{\eps^{\alpha}\nu(s-t_0)}\right) +
T_W^2(t)e^{-\eps^{\alpha}\nu(s-t_0)}\Bigr]ds.\label{T1ineq0} 
\end{eqnarray}
Evaluating the integrals on the right-hand side, recalling the decay factor $\beta$ from \eqref{beta-def}, and keeping dominant terms, yields
\beq
T_W(t) \leq C\left(|\ln\eps|T_W(t_0) +\frac{\eps^{1-\alpha}}{\beta}+\left(\eps+T_\vp(t)\right) \left(T_W(t) \Delta t+\frac{\eps^{1-3\alpha/2}}{ \beta}\right)+\eps^{-\alpha}T_W^2(t) \right).
\label{T1ineq} 
\eeq
where $\Delta t := t - t_0$ is the length of the renormalization interval. The following lemma bounds $T_\vp$.

\begin{lemma} \label{L:T2} Fix $\cK$, then for $\alpha<2$ there exists a constant $C_1>0$ sufficiently small, but independent of $\eps$,
and $C_2>0$ such that for all $t>t_0$ for which 
\beq T_W(t)\leq C_1\eps^\alpha \hspace{0.5in}{\rm and}\hspace{0.5in} 
          \Delta t \leq C_1 \eps^{\alpha/2-1},
     \label{C-1}
 \eeq
 then
\beq
\label{T2est} T_\vp(t) \leq C_2 \left(\Delta t (\eps+\eps^{2-\alpha})+ \eps^{1-\alpha} T_W(t)  +\eps^{-\alpha} T_W^2(t)\right).
\eeq
\end{lemma}
\begin{proof}
From the definition \eqref{T2} of $T_\vp$ we have the bound
\be
T_\vp(t) \leq \int_{t_0}^{t_0 + \Delta t} |\dot{\vp}\,(s)|ds.
\ee
Using the estimate \eqref{vp-dot-bd}, the bound $\|W(s)\|_X\leq CT_W(t) e^{-\eps^{\alpha}\nu(s-t_0)}$ valid for $t_0<s<t$, and integrating in 
$s$ yields
\beq\label{RHSconst}
T_\vp(t) \leq C \left(\Delta t (\eps+\eps^{2-\alpha})+\eps^{1-\alpha}T_W(t) +T_\vp(t)(\eps^{-\alpha}T_W(t)+\Delta t \eps^{1-\alpha/2}) +\eps^{-\alpha} T_W^2(t)\right).
\eeq
Subject to the constraints \eqref{C-1}, for $C_1$ sufficiently small, the $T_\vp$ term on the right-hand side of 
\eqref{RHSconst} may be absorbed into the left-hand term. Adjusting the constant we obtain \eqref{T2est}. 
\hfill\end{proof}
\vskip 0.1in
We continue the estimation of $T_W$ required to establish Proposition \ref{P:W-decay}. Inserting the bound \eqref{T2est} of Lemma\,\ref{L:T2} into \eqref{T1ineq} and collecting powers of $T_W$ we find
\begin{align}
T_W(t) \leq C \Biggl( & \left(|\ln\eps|T_W(t_0) + \frac{\eps^{1-\alpha}+(\Delta t)\left(\eps^{2-3\alpha/2}+\eps^{3-5\alpha/2}\right)}{\beta}\right)+\nonumber\\ 
& T_W(t) \left(\frac{\eps^{2-5\alpha/2}}{\beta}+\eps\Delta t+(\Delta t)^2(\eps+\eps^{2-\alpha}) \right)+\nonumber\\
& T_W^2(t)\left(\frac{\eps^{1-5\alpha/2}}{\beta}+\eps^{-\alpha}+\eps^{1-\alpha}\Delta t\right)+\Delta t\eps^{-\alpha}T_W^3(t)\Biggr).\label{T1-est2}
\end{align}
For this estimate to be meaningful we require that the coefficient of $T_W$ on the right-hand side be strictly less than one. This generates two conditions,  the first requires
\beq
\label{beta-max}
\beta > \frac{\eps^{2-5\alpha/2}}{C}, \hspace{0.2in}{\rm equivalently} \hspace{0.2in}
\Delta t <\eps^{-\alpha}\frac{1}{\nu}\left((2-\frac52\alpha)|\ln\eps|-\ln C\right),
\eeq
and since the renormalization interval $\Delta t$ must be at least $\eps^{-\alpha}$ in order to insure that $\beta\ll 1$, we impose the bounds
\beq
     \eps^{-\alpha} < \Delta t <\eps^{-\alpha}\frac{1}{\nu}\left((2-\frac52\alpha)|\ln\eps|-\ln C\right).
\eeq
In light of these bounds, the second condition for the absorption of the $T_w$ term becomes
\beq
(\Delta t)^2\left(\eps +\eps^{2-\alpha}\right) \ll 1 \hspace{0.2in}{\rm equivalently} \hspace{0.2in}  \boxed{\alpha<\frac 12},\label{alpha-cond}
\eeq
which sets the ultimate limit on $\alpha.$ Subject to these conditions we absorb the linear term and merge the cubic term into the larger quadratic term. Enforcing
the bounds on $\Delta t$ we obtain the key estimate
\beq
T_W(t) \leq C \left( |\ln \eps| T_W(t_0) + \frac{\eps^{1-\alpha}}{\beta}+ T_W^2(t)\left(\eps^{-\alpha}+\frac{\eps^{1-5\alpha/2}}{\beta}\right)\right).
\eeq

This inequality has the form of a quadratic equation, $h(r)=0$ where $r:=T_W(t)$ and
\vskip -0.2in
\beq
h(r):= \overbrace{C\left(|\ln \eps| \,T_W(t_0) + \frac{\eps^{1-\alpha}}{\beta}\right)}^{a_0}
-r +\overbrace{C\left(\eps^{-\alpha}+\frac{\eps^{1-5\alpha/2}}{\beta}\right)}^{a_2} r^2.
\eeq
This equation has two positive roots $0 <r_1=O(a_0)\ll r_2$  if $a_0a_2\ll1$. Indeed,  taking $C_1$ in \eqref{W0-cond} sufficiently small, independent
of $\eps$,  these are precisely the conditions  imposed by \eqref{W0-cond} and \eqref{Dt-cond}, under which we have the bound
\beq\label{T1-bound}
 T_W(t) \leq r_1 \leq C \left(|\ln\eps| \,T_W(t_0) + \frac{\eps^{1-\alpha}}{\beta}\right).
 \eeq

Recalling the definition \eqref{T1} of $T_W$ we obtain \eqref{W-decay1}. Choosing the minimal value of $\beta$, equivalently the maximal value of $\Delta t$,
permitted in \eqref{Dt-cond} we find the first bound of \eqref{W-decay2}, while the second follows
from \eqref{W0-cond}. The estimate \eqref{vp-gap} is then a consequence of \eqref{T2est} and the bound \eqref{T1-bound} on $T_W$.
 \hfill\end{proof}

\subsection{The Renormalization Group iteration}
At the conclusion of the first renormalization interval $(t_0, t_1)$, where $t_1:=t_0+\Delta t$, we have a pulse configuration $\vp(t_1)$
and a remainder $W(\cdot, t_1)\in X_{\vp_0}$ whose $X$ norm is smaller than that of $W(\cdot, t_0)$, so long as $\alpha<\frac12.$ 
We are in a position to iterate the renormalization process until $\vp$ hits the boundary of $\cK.$
That is given $\vp_{n-1}\in\cK_0$, $t_n:=t_{n-1}+\Delta t_{n-1}$, $W(\cdot,t_n)\in X_{\vp_{n-1}}$ satisfying \eqref{W0-cond}, and $\vp(t_{n-1})$ satisfying \eqref{vp-gap}  then
defining 
$$U(\cdot, t_n)=\Phi(\cdot, \vp(t_n))+\Phi_c(\cdot;\vp(t_n),\vp_{n-1}) + W(\cdot, t_n),$$
 from  Proposition\,\ref{P:basepoint} we may construct $\vp_{n}=\vp(t_n)+\cJ(W(t_n))$ and $W_n(\cdot)\in X_{\vp_n}$ such that
 $$ U(\cdot, t_n) = \Phi(\cdot,\vp_n)+\Phi_c(\cdot,\vp_n,\vp_n)+ W_n(\cdot),$$
where $\Phi_c$ is defined according to  \eqref{vecPhi1}. Moreover, since $W(t_n)\in X_{\vp_{n-1}}$ we may apply
\eqref{mvtbnd} to estimate the jump function $\cJ$,
\beq
\label{jump-bd}
| \vp_n-\vp(t_n)| \leq M_1 \|W(t_n)\|_X |\vp_n-\vp_{n-1}|,
\eeq
which,  from \eqref{W-decay2}, is negligible compared to evolution of $\vp(t)$ over $t\in(t_{n-1},t_n).$ The jump in the remainder upon
the renormalization satisfies
\beq
\label{preWjump} \|W_n-W(t_n)\|_X =\left\|\bigl(\Phi(\vp_n)-\Phi(\vp(t_n))\bigr)+\bigl(\Phi_c(\vp_n,\vp_n) -\Phi_c(\vp_{n-1},\vp(t_n))\bigr)\right\|_X.
\eeq
The dominant contribution in the first term comes from the $\Phi_1$ component, which from \eqref{Phi1Linfty}
has a sensitivity to variation in $\vp$ on the order of $\eps^{-\alpha}$. Using  \eqref{vecPhi_1} on the second term yields the bound
\beq
\label{pre2Wjump}
 \|W_n-W(t_n)\|_X\leq C\left(\eps^{-\alpha}|\vp_n-\vp(t_n)|+ \eps^{1-\alpha/2}|\vp_n-\vp_{n-1}|\right).
 \eeq
The estimates \eqref{jump-bd} and \eqref{vp-gap} yield
\beq
\label{Wjump}
 \|W_n-W(t_n)\|_X\leq C\left(|\ln\eps| \eps^{1-2\alpha}\|W_{n}\|_X+|\ln\eps|\eps^{2-3\alpha/2}\right).
 \eeq
In particular, applying the triangle inequality to bound $\|W_n\|_X$ we see for $0<\alpha<\frac12$ that the dominant contribution is from $\|W(t_n)\|_X$, 
which from \eqref{W-decay2} takes the form
\beq\label{Wn-est}
\|W_n\|_X \leq C\left( |\ln\eps| \eps^{1-3\alpha/2}\|W_{n-1}\|_X+ \eps^{1-\alpha}\right).
\eeq

We introduce the Renormalization Group map 
\beq
\mathfrak{G}\left( \ba c W_{n-1} \\ \vp_{n-1}\ea\right) = \left( \ba c W_n \\ \vp_n\ea\right),
\eeq
which produces the initial data at time $t_n$ from the initial data for the pulse and remainder equations, \eqref{vp-evol} and \eqref{decomp3U}, on
the interval $(t_{n-1},t_n)$. From a simple, linear iteration argument applied to \eqref{Wn-est}, we have the estimate
\beq
   \|W_n\|_X \leq C \left( \left(|\ln\eps|\eps^{1-3\alpha/2}\right)^n \|W_0\|_X+ \eps^{1-\alpha}\right). \label{RG-its}
 \eeq
 The estimate \eqref{Main-bound} follows.

\subsection{Long-time asymptotics} From \eqref{vp-gap} we see that $T_\vp\leq C|\ln\eps|\eps^{1-\alpha}$ at each point of the RG iteration process,
while $\|W\|_X\leq T_W$. Using these bounds in \eqref{vp-dot-bd}, we see the dominant long-time contribution to the 
pulse evolution arises from the residual, with the next largest contribution from the square of  $\|\Phi_c\|_X$. From \eqref{M-asymp} we may also
invert the matrix $M$, yielding the reduced evolution
\beq 
\label{red-dyn} \dot{p}_j= -\dfrac{q_j^{2\alpha_{21}/(\alpha_{22}-1)}}{\|\phi_0^\prime\|^2_{L^2}} \left(\cF_2(\Phi), \phi_j^\prime \right)_{L^2}+O\!\left(\epsilon^{2-\alpha}, \eps\|W\|_X,\|W\|_X^2 \right),
\eeq
for $j=1,\cdots, N.$
Substituting for $\cF_2(\Phi)$ from \eqref{resasymp} and integrating by parts, we find 
\beq
\left(\cF_2(\Phi),\phi_j^\prime\right)_{L^2}= -\frac{\alpha_{21}}{\alpha_{22}+1}\left(\Phi_1^{\alpha_{21}-1}\Phi_1^\prime, \phi_j^{\alpha_{22}+1}\right)+ 
O(\eps^{2+\alpha}).
\eeq
Using the last estimate of \eqref{dxPhi1Linfty} to replace $\Phi_1$ with $q_j$, yields
\beq
\left(\cF_2(\Phi),\phi_j^\prime\right)_{L^2}= -\frac{\alpha_{21}q_j^{\alpha_{21}-1}}{\alpha_{22}+1}\left(\Phi_1^\prime, \phi_j^{\alpha_{22}+1}\right)+ 
O(\eps^{2+\alpha}).
\eeq
Turning to \eqref{Phi1-def}, we see that the odd derivatives of $\Phi_1$ at $x=p_j$ of order $3$ or higher are at most $O(\eps^{3+\alpha}),$
while the even derivatives contribute an odd component to $\Phi_1^\prime$. That is, in a neighborhood of $x=p_j$ we have
\beq
  \Phi_1^\prime(x)= \Phi_1^\prime(p_j) + \Xi_{\rm odd}(x) +O\!\left(\eps^{3+\alpha}\right),
 \eeq
where $ \Xi_{\rm odd}(x)$ is odd about $x=p_j$ and does not contribute to the inner product.
Combining these observations with the scaling \eqref{phij-phi0} and inserting into \eqref{red-dyn} shows that the pulse velocity
is proportional to the derivative of $\Phi_1$ at the pulse position,
\beq\label{dyneqexp}
 \dot{p}_j = \frac{\alpha_{21}}{\alpha_{22}+1}\frac{\|\phi_0\|_{L^{\alpha_{22}+1}}^{\alpha_{22}+1}}{\|\phi_0^\prime\|^2_{L^2}}\frac{1}{q_j}\Phi_1^\prime(p_j)+O\!\left(\epsilon^{2-\alpha}, \eps\|W\|_X,\|W\|_X^2 \right).
\eeq
To evaluate $\Phi_1^\prime(p_j)$  we invert $L_{11}^e$ in  \eqref{Phi1-defb} in terms of its Green's function $G_0$, \eqref{Ggreensfunc},
 and take the $x$-derivative of the result, and rescale $\phi_j$ according to \eqref{phij-phi0}, yielding
\begin{align}
\Phi_1^\prime(p_j) &=  -\eps^{-1}\sum_{k=1}^N q_k^\theta\int\limits_\bR \phi_0^{\alpha_{12}}(x-p_k) G_0^\prime(p_j-x))\,dx,\nonumber\\
                                   &= \eps \sum_{k=1}^N q_k^\theta \int\limits_\bR \phi_0^{\alpha_{12}}(x-p_k) \,{\rm sign}(x-p_j)e^{-\eps^{1+\alpha/2}\sqrt{\mu}|x-p_j|}\, dx.
\end{align}
The Green's function is slowly varying and $\phi_0$ is well localized, moreover when $j=k$ the integral is zero due to even-odd parity.
We evaluate the integrals asymptotically, obtaining the expression
\beq
\dot{\vp} =\eps Q^{-1}\cA(\vp) \vq\,^\theta +O\!\left(\epsilon^{2-\alpha}, \eps\|W\|_X,\|W\|_X^2 \right),
\eeq
where $Q$ is the diagonal matrix of the amplitudes $\vq$ and the antisymmetric matrix $\cA(\vp)$ is defined component-wise as
\eqref{matrixA}. It is clear that the pulse evolution is at most $O(\eps)$, hence given  initial data of the form
\eqref{e:init-cond}, then the projected initial pulse condition $\vp_0$ is within $O(\eps^\alpha|\ln\eps|)$ of
$\vp$ and hence its distance to $\partial\cK$ is of the same order as  $d_0:=d(\vp,\partial \cK)$. Consequently the
time to arrive at $\partial\cK$ is $O(\eps^{-1}d_0)$. Moreover evolution of the form \eqref{Main-pulse} generically causes the
pulse spacing to increase in time. Indeed within the weak regime the interaction reduces to repulsive near-neighbor tail interaction,
and the semi-strong spectra is fixed, so that the time to exit the admissible configuration domain, $\cK$, is infinite. 
This completes the proof of Theorem\,\ref{thm:main}.

\section{Technical Estimates}
We prove several technical estimates used in Section 6.  The first involve the correction term $\Phi_c$ in the decomposition
\eqref{decompPhic}.

\begin{lemma}
For each admissible family $\cK$ of $N$-pulse configurations there exists $C>0$ such that 
\be
\label{vecPhi_1}\|\Phi_c\|_X+\|\nabla_\vp\, \Phi_c \|_X \leq C\eps^{1-\alpha/2},
\ee
for all $\vp \in \cK$,
\end{lemma}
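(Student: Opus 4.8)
The plan is to read the bound off directly from the defining formula \eqref{vecPhi1}, $\Phi_c=-\tL_{\vp_0}^{-1}\tpi_{\vp_0}\,\cF\bigl(\Phi(\cdot;\vp\,)\bigr)$, by feeding the residual estimates \eqref{3lemma2} and \eqref{resasymp} into the resolvent estimate \eqref{propres1} of Proposition \ref{prop-resolv} evaluated at $\lambda=0$. The first point to settle is that $\lambda=0$ lies to the right of the contour $\cC$ and that there $\tL_{\vp_0}^{-1}\tpi_{\vp_0}$ coincides with the $X_{\vp_0}$-restriction of the resolvent of $\tL_{\vp_0}$ at $\lambda=0$: by the point-spectrum characterization and the admissibility bound \eqref{Nlam-est}, the spectrum of $\tL_{\vp_0}$ on $X_{\vp_0}$ lies strictly to the left of $\cC$, and every ingredient of the proof of Proposition \ref{prop-resolv} (Lemma \ref{l:L11}, the bound on $(I+\cN_0)^{-1}$, the invertibility of $\tL_{22}$ on $X_{\vp_0}$) holds on and to the right of $\cC$, so \eqref{propres1} extends verbatim to $\lambda=0$. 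Since $k_0=\eps^{1+\alpha/2}\sqrt{\mu}$, the prefactor $\eps/\Re(k_\lambda)$ in \eqref{propres1} equals $\eps^{-\alpha/2}/\sqrt{\mu}$, and hence
\[
 \|\Phi_c\|_X \leq C\eps^{-\alpha/2}\Bigl(\eps\,\|\cF_1(\Phi)\|_{L^1}+\|\cF_2(\Phi)\|_{L^1_{\oalpha,\vp}}\Bigr),
\]
uniformly over $\vp,\vp_0\in\cK$ (the windowed norms at $\vp$ and $\vp_0$ being equivalent in the regime relevant to the renormalization iteration, where $\vp$ stays near $\vp_0$).

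Next I would bound the two residual norms. The estimate \eqref{3lemma2} gives $\|\cF_1(\Phi)\|_{L^1}\leq C$ at once. For $\cF_2(\Phi)$ the $L^2$ bound in \eqref{3lemma2} is not by itself sufficient, so I would instead use the sharper representation \eqref{resasymp}: up to an $O(\eps^r)$ remainder, negligible in any weighted norm once $r=r(\ell)$ is large, $\cF_2(\Phi)=\sum_j(\Phi_1^{\alpha_{21}}-q_j^{\alpha_{21}})\phi_j^{\alpha_{22}}$, and each summand is exponentially localized about $p_j$ because $\phi_j$ is, so the polynomial weight $1+|x-p_j|^{\oalpha}$ in the $L^1_{\oalpha,\vp}$ norm costs only an $O(1)$ factor; the last bound of \eqref{dxPhi1Linfty} then yields $\|\cF_2(\Phi)\|_{L^1_{\oalpha,\vp}}\leq C\eps$. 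Substituting $\|\cF_1(\Phi)\|_{L^1}\leq C$ and $\|\cF_2(\Phi)\|_{L^1_{\oalpha,\vp}}\leq C\eps$ into the displayed inequality produces $\|\Phi_c\|_X\leq C\eps^{-\alpha/2}(\eps+\eps)=C\eps^{1-\alpha/2}$.

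For the gradient term I would use that $\tL_{\vp_0}$ and $\tpi_{\vp_0}$ depend only on the frozen base-point $\vp_0$, so $\nabla_\vp\Phi_c=-\tL_{\vp_0}^{-1}\tpi_{\vp_0}\,\nabla_\vp\cF\bigl(\Phi(\cdot;\vp\,)\bigr)$ and the same chain of inequalities applies with $\cF$ replaced by $\nabla_\vp\cF$. The input is again the residual estimates: \eqref{3lemma2} gives $\|\nabla_\vp\cF_1(\Phi)\|_{L^1}\leq C$, while the derivation of \eqref{3lemma2} shows $\nabla_\vp\cF_2(\Phi)$ is exponentially localized about the pulses with $\|\nabla_\vp\Phi_1\|_{L^\infty}$ of the same order as $\|(\Phi_1-q_j)\phi_j\|_{L^\infty}=O(\eps)$, and that the contributions of $\nabla_\vp\vq$ enter with an $O(\eps^{1+\alpha/2})$ factor and are hence higher order; thus $\|\nabla_\vp\cF_2(\Phi)\|_{L^1_{\oalpha,\vp}}\leq C\eps$ and therefore $\|\nabla_\vp\Phi_c\|_X\leq C\eps^{1-\alpha/2}$.

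The step I expect to require the most care is converting the $L^\infty$ and $L^2$ information supplied by the residual estimates into the weighted $L^1_{\oalpha,\vp}$ norm demanded by \eqref{propres1}: this is precisely where one must exploit the exponential localization of $\cF_2(\Phi)$ (and of $\nabla_\vp\cF_2(\Phi)$) about the pulse centers together with the refined asymptotic formula \eqref{resasymp}, rather than the crude $L^2$ residual bound. A secondary, routine point is confirming that the resolvent estimate \eqref{propres1}, although stated for $\lambda\in\cC$, genuinely applies at $\lambda=0$, which is immediate since $0$ sits to the right of $\cC$ and every ingredient of its proof is valid there.
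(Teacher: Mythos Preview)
Your proposal is correct and follows essentially the same route as the paper: apply the resolvent estimate \eqref{propres1} at $\lambda=0$ (where $k_0=\eps^{1+\alpha/2}\sqrt{\mu}$) to the defining formula \eqref{vecPhi1}, then feed in the residual bounds \eqref{3lemma2}, and repeat for $\nabla_\vp\Phi_c$ using that $\tL_{\vp_0}$ and $\tpi_{\vp_0}$ are frozen in $\vp$. You are in fact slightly more careful than the paper on two points: the paper simply cites \eqref{3lemma2} for the $\cF_2$ contribution without commenting on the passage from the $L^2$ bound stated there to the weighted $L^1_{\oalpha,\vp}$ norm required by \eqref{propres1}, whereas you correctly observe that this follows from the exponential localization visible in \eqref{resasymp}; and the paper does not pause to justify using \eqref{propres1} at $\lambda=0$ rather than on $\cC$, which you address explicitly.
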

\begin{proof}
From the estimates \eqref{adjointest}, the definition \eqref{pi-def} of $\pi_\vp$, and the bounds \eqref{3lemma2} we calculate
\[ \|\pi_\vp \cF(\Phi) \|_X \leq C \eps^2,\]
so that to the orders we are concerned with, $\tpi_\vp\cF(\Phi)=\cF(\Phi)$.  We take the $\|\cdot\|_X$ norm of \eqref{vecPhi1} and 
use the resolvent estimate \eqref{propres1} at $\lambda=0$, for
 which $\Re (k_0)=|k_0|=\eps^{1+\alpha/2}$,  to obtain
\be
\|{\Phi}_c\|_X  \leq C\eps^{-\alpha/2}\left(\eps \|\cF_1(\Phi)\|_{L^1} +\|\cF_2(\Phi)\|_{L_{1,\vp}^1}\right).
\ee
The bounds \eqref{3lemma2} on the residual yield the $\Phi_c$ estimate of \eqref{vecPhi_1}.
Taking $\nabla_\vp$ of \eqref{vecPhi1}, and observing that $L_{\vp_0}$ and $\tpi_{\vp_0}$ are independent of $\vp$, yields
\be
\nabla_\vp\,\Phi_c=-\tL_{\vp_0}^{-1} \tpi_{\vp_0}\nabla_{\vp}\, \cF(\Phi).
\ee
The estimate \eqref{propres1}, applied at $\lambda=0$, yields the bound
\beq
\|\nabla_\vp\,\Phi_c\|_X \leq C \left(\eps^{1-\alpha/2}\|\nabla_\vp \cF_1(\Phi)\|_{L^1} + \eps^{-\alpha/2}\|\nabla_\vp\cF_2(\Phi)\|_{L_{1,\vp}^1}\right),
\eeq
which, when combined with \eqref{3lemma2} yields the $\nabla_\vp$ estimate of \eqref{vecPhi_1}.\hfill
\end{proof}

We break $\Delta L$  into secular,  $\Delta_s L:=  L_\vp-L_{\vp_0},$ and
reductive, $\Delta_r L:= L_{\vp_0} - \tL_{\vp_0},$  which satisfy the following bounds.
\begin{lemma}\label{L:DeltaL}
For each admissible family, $\cK$,  of $N$-pulse configurations there exists $C>0$ such that for all $V=(V_1,V_2)^T\in X_\vp$
\begin{align}
\label{DeltaLsW}\eps \|[\Delta_s L\, V]_1\|_{L^1} + \|[\Delta_s L\, V]_2\|_{L_{1,\vp}^1} \leq& C T_\vp(t)\|V\|_X, \\
\label{DeltaLrW}\eps^2\|[\Delta_r L \, V]_1\|_{L^1_{1,\vp}}+\|[\Delta_r L\, V]_2\|_{L_{1,\vp}^1} \leq& C\eps \|V\|_X,\\
\label{DeltaLrmass} \left|\otimes \vec{\chi} \cdot [\Delta_r L\, V]_1\right| =& 0, \\
\label{nonlinestimate} \eps^{-1}\|\cNon_1(V)\|_{L^1}+\|\cNon_2(V)\|_{L_{1,\vp}^1} \leq& C \|V\|_X^2.
\end{align}
\end{lemma}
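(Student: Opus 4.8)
The four bounds are estimates on multiplication operators and finite-rank operators built from the profiles of Section 2: the slowly varying component $\Phi_1$, which is only $O(1)$ on the pulse cores but saturates at $O(\eps^{-\alpha/2})$ in the far field, and the exponentially localized component $\Phi_2=\sum_j\phi_j$. The plan is to treat each matrix entry of $\Delta_s L$, $\Delta_r L$ and of the Taylor remainder $\cNon$ separately, using systematically two facts: every factor of $\Phi_2$ confines the relevant integral to $O(1)$ neighbourhoods of the pulse positions $\vp$, on which the polynomial weight of $L^1_{1,\vp}$ is harmless and $\Phi_1=O(1)$ -- the latter quantified by the last estimate of \eqref{dxPhi1Linfty} and by \eqref{mega-bound}; and $V=(V_1,V_2)^T$ with $\|V\|_X$ bounded supplies $\|V_1\|_{L^\infty}\le\|V_1\|_{W^{1,1}_{\xi}}\le\|V\|_X$ together with the $H^1$- and weighted $L^1_{\oalpha,\vp}$-control of $V_2$.

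For \eqref{DeltaLsW}, the entries of $\Delta_s L=L_\vp-L_{\vp_0}$ differ only in their potential terms, each a monomial $\Phi_1^a\Phi_2^b$. Along the segment $\vp_t=\vp_0+t(\vp-\vp_0)$ one writes the difference of potentials as $(\vp-\vp_0)\cdot\int_0^1\nabla_\vp(\Phi_1^a\Phi_2^b)|_{\vp_t}\,dt$; by \eqref{Phi1Linfty}--\eqref{dxPhi1Linfty} the $\vp$-gradient is a sum of exponentially localized functions on whose support $\Phi_1=O(1)$, so the difference is $O(|\vp-\vp_0|)\le O(T_\vp(t))$ in $L^1$ for the first row and in $L^2$ for the second. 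Pairing with $V$ by H\"older ($L^1$--$L^\infty$ against $V_1$, $L^2$--$L^2$ against $V_2$, both then absorbed into $\|\cdot\|_{L^1_{1,\vp}}$ in the second row since the support is compact) and balancing the $\eps$ in front of the first component against the $\eps^{-1}$ multiplying the nonlinearity in $\cF_1$ gives \eqref{DeltaLsW}. The identity \eqref{DeltaLrmass} is purely algebraic: pairing $[\Delta_r L\,V]_1$ with $\chi_i$, the multiplication part of $L_{\vp_0}-\tL_{\vp_0}$ contributes $(\Phi_2^{\alpha_{12}}\Phi_1^{\alpha_{11}-1}\chi_i,V_1)_{L^2}+(\Phi_2^{\alpha_{12}-1}\Phi_1^{\alpha_{11}}\chi_i,V_2)_{L^2}$, while the finite-rank $J_{11},J_{12}$ contribute $\sum_j(\chi_i,\xi_j)_{L^2}(\Phi_2^{\alpha_{12}}\Phi_1^{\alpha_{11}-1}\chi_j,V_1)_{L^2}$ and its $V_2$-analogue; since ${\rm supp}\,\xi_j\subset(p_j-\ell/4,p_j+\ell/4)$ while $\Delta p_i\ge\ell|\ln\eps|$ by \eqref{cKell-def}, the partition of unity is chosen with $\chi_i\xi_j=\delta_{ij}\xi_j$, so $(\chi_i,\xi_j)_{L^2}=\delta_{ij}$ and the two contributions cancel identically. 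This mass-reproducing property of $J_{11},J_{12}$ is exactly what licenses the sharpened semi-group estimate \eqref{semigroup3} for $\Delta_r L$.

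For the size bounds: in the first row of $\Delta_r L$ the multipliers $\Phi_2^{\alpha_{12}}\Phi_1^{\alpha_{11}-1}$, $\Phi_2^{\alpha_{12}-1}\Phi_1^{\alpha_{11}}$ and the operators $J_{11},J_{12}$ are each bounded $L^\infty$ (resp.\ $L^2$) $\to L^1_{1,\vp}$ with $O(1)$ norm -- the $\Phi_2$-factors localize, $\Phi_1=O(1)$ there, and $\|\xi_j\|_{L^1_{1,\vp}}=O(1)$ -- so the $\eps^2$ prefactor of the first component in \eqref{DeltaLrW} beats the $\eps^{-1}$ in the row and yields $C\eps\|V\|_X$. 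In the second row, $(L_{\vp_0}-\tL_{\vp_0})_{21}=\alpha_{21}(\Phi_1^{\alpha_{21}-1}\Phi_2^{\alpha_{22}}-J_{21})$ and $(L_{\vp_0}-\tL_{\vp_0})_{22}$ minus the common $\partial_x^2-1$ equals $\alpha_{22}(\Phi_1^{\alpha_{21}}\Phi_2^{\alpha_{22}-1}-\sum_j\phi_0^{\alpha_{22}-1}(\cdot-p_j))$; using $\phi_j^{\alpha_{22}-1}=q_j^{-\alpha_{21}}\phi_0^{\alpha_{22}-1}(\cdot-p_j)$ from \eqref{phij-phi0}, the tail--tail bound $\Phi_2^\beta-\sum_j\phi_j^\beta=O(\eps^r)$, and the last estimate of \eqref{dxPhi1Linfty}, both differences are $O(\eps)$ in $L^\infty$ and exponentially localized, so acting on $V_1,V_2\in L^\infty$ gives $C\eps\|V\|_X$; this proves \eqref{DeltaLrW}. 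For $\cNon(V)=\cF(\Phi+V)-\cF(\Phi)-L_\vp V$ I Taylor-expand to second order: the remainder is a finite sum of terms $c\,\xi_1^a\xi_2^bV_1^cV_2^d$ with $c+d\ge2$, evaluated at intermediate $\xi_1\in[\Phi_1,\Phi_1+V_1]$, $\xi_2\in[\Phi_2,\Phi_2+V_2]$, every term coming from $\cF_1$ carrying at least one $\Phi_2$- or $V_2$-factor. Each such term is estimated by placing the $\xi_2$- and $V_2$-powers into $L^1\cap L^\infty$ (via $\|V_2\|_{H^1},\|V_2\|_{L^1_{\oalpha,\vp}}\le\|V\|_X$ and $|\xi_2|^b\le C(\Phi_2^b+|V_2|^b)$ when $b\ge1$) and by splitting $\xi_1=\Phi_1+sV_1$: where $\Phi_2$-powers are present $\Phi_1=O(1)$ on their support, while where only $V_2$-powers accompany $\Phi_1$ the $O(\eps^{-\alpha/2})$ excess of $\Phi_1$ over $q_j$ is absorbed against the polynomial weight by \eqref{mega-bound} at the cost of one power of $\eps$, and the super-quadratic terms are merged into the quadratic ones for $\|V\|_X\le1$. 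Assembling these term-by-term bounds yields \eqref{nonlinestimate}.

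The main obstacle is this last step -- the quadratic-in-$\|V\|_X$ estimate of $\cNon$, and to a lesser extent of the reductive error $\Delta_r L$ -- carried out against the $O(\eps^{-\alpha/2})$ far-field growth of $\Phi_1$. This growth is the very reason the second-component norm in $X$ must be the polynomially weighted $L^1_{\oalpha,\vp}$, $\oalpha=\max\{\alpha_{11},\alpha_{21}\}$, rather than a plain $L^1$ or $L^2$ norm, and it is why \eqref{mega-bound} -- which trades the far-field excess $\Phi_1^\beta-\vchi\cdot\vq^\beta$ against the weight at unit cost in $\eps$ -- is indispensable in place of a naive triangle inequality. A subsidiary technical point is the control of powers of the intermediate value $\xi_2$ when $\alpha_{12},\alpha_{22}$ are non-integer: for exponents $\ge1$ the elementary inequality $|\xi_2|^b\le C(\Phi_2^b+|V_2|^b)$ suffices, and the borderline case $\alpha_{12}=2$, which is the one of primary interest, reduces simply to $\|V_2\|_{L^2}^2\le\|V\|_X^2$.
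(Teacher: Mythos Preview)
Your proposal is correct and follows essentially the same approach as the paper: localization via the $\Phi_2$-factors combined with the last estimate of \eqref{dxPhi1Linfty} to tame $\Phi_1$ on pulse cores, H\"older pairings against $V_1\in L^\infty$ and $V_2\in L^2\cap L^1_{\oalpha,\vp}$, the algebraic mass-cancellation built into $J_{11},J_{12}$ for \eqref{DeltaLrmass}, and \eqref{mega-bound} for the $\Phi_1^\beta V_2^2$-type terms in the nonlinearity. Your mean-value-integral presentation of $\Delta_s L$ and the explicit verification $(\chi_i,\xi_j)_{L^2}=\delta_{ij}$ are cosmetic variants of the paper's direct Lipschitz bound on $\Phi_{\vp,2}-\Phi_{\vp_0,2}$ and its ``by construction'' remark, respectively.
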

\begin{proof}
We first examine the secular operator $\Delta_s L$,
\beq
 \Delta L_s =\begin{pmatrix} \eps^{-1}\alpha_{11}\left(\Phi_{\vec{p},1}^{\alpha_{11}-1}\Phi_{\vec{p},2}^{\alpha_{12}}- \Phi_{\vec{p}_0,1}^{\alpha_{11}-1}\Phi_{\vec{p}_0,2}^{\alpha_{12}}\right) &
 \eps^{-1} \alpha_{12}\left(\Phi_{\vec{p},1}^{\alpha_{11}}\Phi_{\vec{p},2}^{\alpha_{12}-1}-\Phi_{\vec{p}_0,1}^{\alpha_{11}}\Phi_{\vec{p}_0,2}^{\alpha_{12}-1}\right) \\
 \alpha_{21}\left(\Phi_{\vec{p},1}^{\alpha_{21}-1}\Phi_{\vec{p},2}^{\alpha_{22}}-\Phi_{\vec{p}_0,1}^{\alpha_{21}-1}\Phi_{\vec{p}_0,2}^{\alpha_{22}}\right) & \alpha_{22}\left(\Phi_{\vec{p},1}^{\alpha_{21}}\Phi_{\vec{p},2}^{\alpha_{22}-1}-\Phi_{\vec{p}_0,1}^{\alpha_{21}}\Phi_{\vec{p}_0,2}^{\alpha_{22}-1}\right)
 \end{pmatrix}.
\eeq
The $\Phi_2$ terms  are smooth and decay exponentially away from each pulse position, in particular there exists a $C>0$ such that
\be
\|\Phi_{\vec{p},2}-\Phi_{\vec{p}_0,2}\|_{H^1}+\|\Phi_{\vec{p},2}-\Phi_{\vec{p}_0,2}\|_{L^1} \leq C|\vp - \vp _0|\leq C T_\vp(t),
\ee
where  $T_\vp$ is defined in \eqref{T2}. From  \eqref{dxPhi1Linfty} we have the estimates $\|\partial_x \Phi_1\|\leq C\eps$  while
for any fixed $\beta>0$, $\|\Phi_2^\beta\Phi_1\|_{L^\infty}\leq C$. Combining these estimates yields 
\beq
\|[\Delta_sL\, V]_1\|_{L^1} \leq \frac{C}{\eps}\left(\|\Phi_{\vec{p},2}- \Phi_{\vec{p}_0,2}\|_{L^1}\|V_1\|_{L^{\infty}} +
 \|\Phi_{\vec{p},2}- \Phi_{\vec{p}_0,2}\|_{L^2}\|V_2\|_{L^2}\right) \leq \frac{C}{\eps} T_\vp(t) \|V\|_X. 
\eeq
The estimate on $\|[\Delta_sL\, V]_2\|_{L_{1,\vp}^1}$ is similar and \eqref{DeltaLsW} follows.  For the reductive operators, $\Delta_rL$,  
the difference $L_{\vp_0}-\tL_{\vp_0}$ is large but has small mass in each $\vchi$ window, which permits the application of \eqref{propres3}.  
The reductive operator takes the form
\beq
\Delta_r L = \begin{pmatrix}  \epsilon^{-1}\left( \alpha_{11}\Phi_{\vec{p}_0,1}^{\alpha_{11}-1}\Phi_{\vec{p}_0,2}^{\alpha_{12}}-J_{11}(\vec{p}_0)\right) &
 \epsilon^{-1}\left(\alpha_{12}\Phi_{\vec{p}_0,1}^{\alpha_{11}}\Phi_{\vec{p}_0,2}^{\alpha_{12}-1}-J_{12}(\vec{p}_0)\right)\\
 \left(\alpha_{21}\Phi_{\vec{p}_0,1}^{\alpha_{21}-1}\Phi_{\vec{p}_0,2}^{\alpha_{22}}-J_{21}(\vec{p}_0)\right) &
  \alpha_{22}\Phi_{\vec{p}_0,1}^{\alpha_{21}}\Phi_{\vec{p}_0,2}^{\alpha_{22}-1}-\alpha_{22}\sum\limits_{j=1}^N  \phi_0^{\alpha_{22}-1}(x-p_{j,0})
  \end{pmatrix},
\eeq
where $J_{11}$ and $J_{12}$, defined in \eqref{J11}-\eqref{J12}, are chosen so that
\[\otimes \vchi\cdot [\Delta_r L\, V]_1=0, \]
for all $V\in W^{1,1}_\xi(\bR),$ and thus \eqref{DeltaLrmass} is satisfied.
The weighted-norm bound $\|[\Delta_r L\,V\|_{L^1_{1,\vp}}\leq C\eps^{-1}\|V\|_X$ appearing as the first term of \eqref{DeltaLrW}
follows from typical H\"older estimates. The second bound of  \eqref{DeltaLrW} follows from the form, \eqref{J21-def}, of $J_{21}$ and the
last estimate appearing in \eqref{dxPhi1Linfty} applied to $(\Phi_1-q_k)\phi_k^\beta$ for $\beta$ taking the values $\alpha_{22}-1$ and $\alpha_{22}$.

For the nonlinearity, we consider the case $\alpha_{12}=\alpha_{22}=2$; the other cases are similar. The leading order terms in the nonlinearity take the form,
\beq
\cNon(V)=\begin{pmatrix}  \eps^{-1}\Phi_1^{\alpha_{11}-1}\Phi_2 V_1V_2+\Phi_1^{\alpha_{11}}V_2^2 + \eps^{-1}\Phi_1^{\alpha_{11}-2}\Phi_2^2V_1^2 \\ 
 \Phi_1^{\alpha_{21}-1}\Phi_2 V_1V_2+\Phi_1^{\alpha_{21}}V_2^2 + \Phi_1^{\alpha_{21}-2}\Phi_2^2V_1^2
 \end{pmatrix},
 \eeq
where the derivatives in $V_1$ are smooth since $\Phi_1$ is uniformly bounded away from zero, where the powers $U^{\alpha_{j1}}$ in $\cF$ are non-smooth.
For terms in which the power of $\Phi_2$ is positive, we use the bound $\|\Phi_1^{\beta_1} \Phi_2^{\beta_2}\|_{L^\infty}\leq C$ 
which may be inferred from \eqref{dxPhi1Linfty}. For the terms without $\Phi_2$ we use \eqref{mega-bound}. Together this approach yields
\eqref{nonlinestimate}.
\hfill \end{proof}

Combining the estimates above with  Proposition\,\ref{P:semigroup} yields the following corollary.

\begin{corollary}
For each admissible class $\cK$ there exists $C>0$ such that for all $t>0$
\begin{align}
\label{semi11} \|S(t)\left(\tpi_{\vp_0}\Delta_sL\,V\right)\|_X \leq& CT_\vp(t) e^{-\epsilon^{\alpha}\nu t}\|V\|_X, \\
\label{semi12} \|S(t)\left(\tpi_{\vp_0}\Delta_r L \,V\right) \|_X \leq& C\eps e^{-\epsilon^{\alpha}\nu t}\|V\|_X, \\
\label{semi13} \|S(t)\left(\tilde{\pi}_{\vec{p}_0}\mathcal{N}(V)\right)\|_X \leq& Ce^{-\epsilon^{\alpha}\nu t}\|V\|_X^2.
\end{align}
\end{corollary}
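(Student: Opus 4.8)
The plan is to derive each of the three semi-group bounds \eqref{semi11}--\eqref{semi13} by first establishing the corresponding $L^1$/$L^1_{1,\vp}$ control on the forcing term $\tpi_{\vp_0}\Delta_s L\,V$, $\tpi_{\vp_0}\Delta_r L\,V$, and $\tpi_{\vp_0}\cNon(V)$ supplied by Lemma\,\ref{L:DeltaL}, and then feeding these into the appropriate member of the semi-group estimate family of Proposition\,\ref{P:semigroup}. The key structural point is that in each case the forcing is the right \emph{size} in the right \emph{norm}: the first component lands in $L^1$ (or better), the second in $L^1_{\oalpha,\vp}$, and the resulting $\|\cdot\|_X$ bound then follows by matching the source to the semi-group inequality whose hypotheses it satisfies.

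First I would treat \eqref{semi11}. Since $V\in X_\vp$, Lemma\,\ref{L:DeltaL} gives $\eps\|[\Delta_s L\,V]_1\|_{L^1}+\|[\Delta_s L\,V]_2\|_{L^1_{1,\vp}}\leq CT_\vp(t)\|V\|_X$, hence also $\|[\Delta_s L\,V]_2\|_{L^1_{\oalpha,\vp}}\leq CT_\vp(t)\|V\|_X$ since $\oalpha\geq 1$. Applying the semi-group estimate \eqref{semigroup1} to $F=\tpi_{\vp_0}\Delta_s L\,V$ — noting that $\tpi_{\vp_0}$ is bounded on $X$ to orders we care about, as observed in the proof of Lemma\,\ref{vecPhi_1} — we obtain $\|S(t)\tpi_{\vp_0}\Delta_s L\,V\|_X\leq Ce^{-\eps^\alpha\nu t}(\eps\|[\Delta_s L\,V]_1\|_{L^1}+\|[\Delta_s L\,V]_2\|_{L^1_{\oalpha,\vp}})\leq CT_\vp(t)e^{-\eps^\alpha\nu t}\|V\|_X$, which is \eqref{semi11}. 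For \eqref{semi13} the argument is identical with \eqref{nonlinestimate} in place of \eqref{DeltaLsW}: $\eps^{-1}\|\cNon_1(V)\|_{L^1}+\|\cNon_2(V)\|_{L^1_{\oalpha,\vp}}\leq C\|V\|_X^2$, so \eqref{semigroup1} gives $\|S(t)\tpi_{\vp_0}\cNon(V)\|_X\leq Ce^{-\eps^\alpha\nu t}\|V\|_X^2$.

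The estimate \eqref{semi12} is the one requiring care, and I expect it to be the main obstacle: a naive application of \eqref{semigroup1} to $\Delta_r L\,V$ would only give $\|[\Delta_r L\,V]_1\|_{L^1}\leq C\eps^{-1}\|V\|_X$ from \eqref{DeltaLrW}, producing an $O(\eps^{-1})$ bound rather than the required $O(\eps)$. The point is precisely that the reductive difference $L_{\vp_0}-\tL_{\vp_0}$, though large in $L^1$, has \emph{zero coarse-grained mass} in each window, $\otimes\vchi\cdot[\Delta_r L\,V]_1=0$ by \eqref{DeltaLrmass}. This is exactly the hypothesis under which the enhanced semi-group estimate \eqref{semigroup3} applies. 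Invoking \eqref{semigroup3} with $f_1=[\Delta_r L\,V]_1$ and $f_2=[\Delta_r L\,V]_2$, the $|\otimes\vchi\cdot f_1|$ term vanishes, leaving $\|S(t)\tpi_{\vp_0}\Delta_r L\,V\|_X\leq Ce^{-\eps^\alpha\nu t}(\eps^2\|[\Delta_r L\,V]_1\|_{L^1_{1,\vp}}+\|[\Delta_r L\,V]_2\|_{L^1_{\oalpha,\vp}})\leq C\eps e^{-\eps^\alpha\nu t}\|V\|_X$ by the two bounds of \eqref{DeltaLrW}. One minor bookkeeping item in all three cases is that the projection $\tpi_{\vp_0}=I-\pi_{\vp_0}$ must be checked to preserve both the relevant source norms and (for \eqref{semi12}) the massless property; the former follows from the adjoint-eigenfunction asymptotics \eqref{adjointest} bounding $\pi_{\vp_0}$ on $X$, and the latter holds because $\pi_{\vp_0}$ of the reductive source contributes only at order $\eps^r$, well below $O(\eps)$.
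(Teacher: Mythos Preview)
Your approach is correct and matches the paper's, which simply states that the corollary follows by combining Lemma~\ref{L:DeltaL} with Proposition~\ref{P:semigroup}; in particular you have correctly identified that \eqref{semi12} requires the enhanced estimate \eqref{semigroup3} together with the masslessness \eqref{DeltaLrmass}, which is the only nontrivial point. One cosmetic slip: the passage from an $L^1_{1,\vp}$ bound to an $L^1_{\oalpha,\vp}$ bound when $\oalpha\geq 1$ goes the wrong way for general functions, but since the second components here all carry exponentially localized factors of $\Phi_2$, the Lemma~\ref{L:DeltaL} estimates hold verbatim in $L^1_{\oalpha,\vp}$ with the same proof.
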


\section{Discussion: Normal hyperbolicity and adiabatic stability}

The normal hyperbolicity condition $\alpha\in[0,\frac12)$ presented in Theorem\,\ref{thm:main}, can be motivated by the following simple argument. 
For a fixed $N$ pulse configuration $\vp_0=\vp(t_0)$ with associated linearization $L_{\vp_0}$, we obtain decay estimates
for the semi-group generated by $L_{\vp_0}$, see  Proposition\,\ref{P:semigroup},  which guarantee exponential  decay over time-scales  
$t-t_0\sim \eps^{-\alpha}$. However, over this time period we find that the pulse positions $\vp=\vp(t)$ experience a drift on the order of 
$|\vp(t_0+ t)-\vp_0|\sim \eps  t$ and the linearization about the evolving pulse configuration is 
time-dependent, $L:=L_{\vp(t)}$. For non-self adjoint operators, time-dependence in the linear operator can act as a source of forcing that 
destabilizes the underlying equilibrium, even if for each fixed $t$ the linearization has exponentially stable semi-groups. In general, the problem 
of characterizing the semi-group produced by a time-dependent linearization is nontrivial, \cites{Kato-70, Kato-73}. However, if the time-dependence of 
the linear operators is sufficiently slow compared to the exponential decay rate of the semi-group associated to each fixed-time operator, one would 
generically imagine that the time-dependent operator inherits the exponential decay of its frozen-time constituents.  

A characterization of normal hyperbolicity for a flow in a neighborhood of
 a manifold should specify how slow is ``slow-enough.'' We introduce a characterization via the ``secularity'' in the linearization, 
 $\Delta L :=L_{\vp(t)}-L_{\vp_0}$. Introducing the spectral projection, $\pi_{\vp_0}$, onto the tangent plane of the manifold of semi-strong 
 $N$-pulses at pulse configuration $\vp_0$,  the forcing induced by the secular term is characterized by the
  quantity $\|(I-\pi_{\vp_0})\Delta L\|$ where $\|\cdot\|$ denotes an appropriate operator norm. Assuming the generic scalings 
\[ \|(I-\pi_{\vp_0})\Delta L\| \sim \|\Delta L\|\sim |\vp(t_0+t)-\vp_0| \sim \eps t, \]
then over the time-scale, $t-t_0\sim \eps^{-\alpha},$ required to obtain exponential decay, the contribution of the secularity scales like
\[  \int _0^{\eps^{-\alpha}} \|(I-\pi_{\vp_0})\Delta L(s)\| e^{-\nu \eps^\alpha(\eps^{-\alpha}-s)}\, ds \sim \eps^{1-2\alpha} \int _0^1 s e^{-\nu(1-s)}\, ds=O(\eps^{1-2\alpha}).\]
This suggests the normal-hyperbolicity constraint $\alpha<\frac12$. In the study here-in, this constraint is manifest in equation \eqref{T1-est2}, 
which requires  $(\Delta t)^2\eps\ll 1$, where $\Delta t\sim \eps^{-\alpha}$ is the time required to obtain $O(1)$ decay in the semi-group.
 This discussion motivates the normal hyperbolicity conjecture: ``The square of the linear decay time multiplied by the pulse velocity 
 must be sufficiently smaller than one.'' For the system \eqref{UV}, this is the genesis of the normal hyperbolicity  condition, $\alpha<\frac12$. 
   
   What structure remains for decay rates slower than the normal hyperbolicity threshold?  Our intuition is that the RG iteration, which effectively produces linear semi-group estimates for the time-dependent linearization, $L_\vp(t)$, will fail, and the separation of the spectral spaces into dynamic (the $N$ translational eigenvalues)  and slaved (the rest of the spectrum) modes breaks down.  It may be possible to recover the required linear estimates if the nose of the essential spectrum, which extends to $-\eps^\alpha\mu$, is incorporated into the dynamic elements of the decomposition. In particular, resonance poles, the zeros of $\det(I+\cN_\lambda)$ for $\lambda$ in the essential spectrum (see Proposition 4.1), must be accounted for. It is unclear if this can be achieved in a finite-dimensional setting, or if the underlying dynamics would be truly infinite dimensional. The situation is evocative of the spectral gaps required to produce inertial manifolds for  dissipative PDEs, see \cite{FST-88}. It is precisely the spectral gap which is closing here.

\section*{Acknowledgments}
 K.P. acknowledges support from the National Science Foundation under grants  DMS 0708804 and DMS 1125231. 

\begin{bibdiv}
\begin{biblist}
\bib{Jones}{article}{
title={A topological invariant arising in the stability analysis of travelling waves},
author={ J. Alexander},
author={ R. Gardner},
author={C. Jones},
journal= {J. Reine Angew. Math.},
volume={ 410},
pages={ 167\ndash 212},
year={1990}, }
 
 \bib{chem1}{book}{
    author={P. W. Atkins},
    author={J. De Paula},
    title={Physical Chemistry},
    edition={7th edition},
    publisher = {Oxford University Press, Oxford},
	  year = {2002},
	  }

 \bib{BatesJones}{article}{
title = {Invariant Manifolds for Semilinear Partial Differential Equations},
author = {P. W. Bates and C. K. R. T. Jones},
journal = {Dynam. Report.},
volume = {2},
pages = {1\ndash 38},
year = {1989},
}

 \bib{Bates}{article}{
title = {Existence and persistence of invariant manifolds for semiflows in banach space},
author = {P. W. Bates},
author= {K. Lu},
author= {C. Zeng},
journal = {Mem. Amer. Math. Soc.},
volume = {135},
year = {1998},
}

 \bib{Bates2}{article}{
title = {Approximately invariant manifolds and global dynamics of spike states},
author = {P. W. Bates},
author= {K. Lu},
author= {C. Zeng},
journal = {Dynam. Report.},
volume = {174},
pages = {355 \ndash 433},
year = {2008},
}

 \bib{Bona}{article}{
title = {Higher order asymptotics of decay for nonlinear, dispersive, dissipative wave equations},
author = {J. Bona},
author = {K. Promislow},
author = {C. Wayne},
journal = {Dynam. Report.},
volume = {8},
pages = {1179\ndash 1206},
year = {1995},
}

 \bib{Bricmont1}{article}{
title = {Renormalizing Partial Differential Equations, Constructive Physics},
author = {J. Bricmont and A. Kupiainen},
journal = {Lecture Notes in Phys.},
volume = {446},
pages = {83 \ndash 115},
year = {1995},
}

 \bib{Bricmont2}{article}{
title = {Renormalization group and asymptotics of solutions of nonlinear parabolic equations},
author = {J. Bricmont},
author = {A. Kupiainen},
author = {G. Lin},
journal = {Comm. Pure. Appl. Math.},
volume = {47},
pages = {893\ndash 922},
year = {1994},
}

 \bib{ChenRG}{article}{
title = {Renormalization group theory for global asymptotic analysis},
author = {L-Y. Chen},
author = {N. Goldenfeld},
author = {Y. Oono},
journal = {Phys. Rev. Lett.},
volume = {73},
pages = {1311\ndash 1315},
year = {1994},
}

 \bib{ChenWard}{article}{
title = {The Stability and Dynamics of Localized Spot Patterns in the Two-Dimensional Gray-Scott Model},
author = {W. Chen and M. Ward},
journal = {SIAM J. on Appl. Dyn. Sys.},
volume = {10},
pages = {582\ndash 666},
year = {2011},
}

 \bib{DaiP-12}{article}{
title = {Geometric evolution of bi-layers under the Functionalized Cahn-Hilliard Equation},
author = {S. Dai and K. Promislow},
note = {submitted},
}

 \bib{GS1}{article}{
title = {Slowly-modulated two-pulse solutions in the Gray-Scott model I: Asymptotic construction and stability},
author = {A. Doelman},
author = {W. Eckhaus},
author = {T. Kaper},
journal = {SIAM J. on Appl. Math},
volume = {61},
pages = {1080\ndash 1102},
year = {2001},
}

 \bib{DGK-98}{article}{
title = {Stability analysis of singular patterns in the 1-D Gray-Scott model: A matched asymptotics approach},
author = {A. Doelman},
author = {R. A. Gardner},
author = {T. Kaper},
journal = {Physica D},
volume = {122},
pages = {1\ndash 36},
year = {1998},
}

 \bib{IndRD}{article}{
title = {Large stable pulse solutions in reaction-diffusion equations},
author = {A. Doelman},
author = {R. A. Gardner},
author = {T. Kaper},
journal = {Indiana Univ. Math J.},
volume = {50},
pages = {443\ndash 507},
year = {2001},
}

\bib{DGK-02}{article}{
title = {A stability index analysis of 1-D patterns of the Gray-Scott model},
author = {A. Doelman},
author = {R. A. Gardner},
author = {T. Kaper},
journal = {Memoirs of the AMS},
volume = {155},
pages = {},
year = {2002},
}

 \bib{S-SIRD}{article}{
title = {Semi-strong pulse interactions in a class of coupled reaction-diffusion equations},
author = {A. Doelman},
author = {T. Kaper},
journal = {SIAM J. on Appl. Dyn. Sys.},
volume = {2},
pages = {53\ndash 96},
year = {2003},
}

 \bib{RGG-M}{article}{
title = {Nonlinear asymptotic stability of the semistrong pulse dynamics in a regularized Gierer-Meinhardt model},
author = {A. Doelman},
author = {T. Kaper},
author = {K. Promislow},
journal = {SIAM J. Math. Anal.},
volume = {38},
pages = {1760\ndash 1787},
year = {2007},
}

 \bib{DP-12}{article}{
title = {Co-dimension one structures in the Functionalized Cahn-Hilliard Equation: existence and stability},
author = {A. Doelman},
author = {K. Promislow},
note = {preprint},
}

 \bib{EiRD}{article}{
title = {The motion of weakly interacting pulses in reaction-diffusion systems},
author = {S.-I. Ei},
journal = {J.D.D.E.},
volume = {14},
pages = {85\ndash 137},
year = {2002},
}

 \bib{EiPP}{article}{
title = {Pulse-pulse interaction in reaction-diffusion systems},
author = {S.-I. Ei},
author = {M. Mimura},
author = {M. Nagayama},
journal = {Physica D},
volume = {165},
pages = {176\ndash 198},
year = {2002},
}

 \bib{EiSpots}{article}{
title = {Interacting Spots in reaction diffusion systems},
author = {S.-I. Ei},
author = {M. Mimura},
author = {M. Nagayama},
journal = {DCDS},
volume = {14},
pages = {31\ndash 62},
year = {2006},
}

 \bib{EiGM}{article}{
title = {Dynamics of metastable localized patterns and its application to the interaction of spike solutions for the Gierer-Meinhardt systems in two spatial dimension},
author = {S.-I. Ei},
author = {J. Wei},
journal = {Japan J. Ind. Appl. Math.},
volume = {19},
pages = {181\ndash 226},
year = {2002},
}

 \bib{FST-88}{article}{
title = {Inertial manifolds for nonlinear evolutionary equations},
author = {C. Foias},
author = {G. Sell},
author = {R. Temam},
journal = {J. Differential Equation},
volume = {73},
pages = {309\ndash 353},
year = {1988},
}

\bib{GUS-04}{article}{
title={Stable transport of information near essentially unstable localized 
structures},
author={T. Gallay},
author={G. Schneider},
author={H. Uecker},
journal={ Discrete Contin. Dyn. Syst., Ser. B},
volume= {4}, 
pages={349-390},
year= {2004}
}

 \bib{GP-11}{article}{
title = {Curvature driven flow of bi-layer interfaces},
author = {N. Gavish},
author = {G. Hayrapetyan},
author = {Y. Li},
author = {K. Promislow},
journal = {Physica D},
volume = {240},
pages = {675\ndash 693},
year = {2011},
}

 \bib{Polymer-12}{article}{
title = {Variational Models of Network Formation and Ion Transport: Applications to Perfluorosulfonate Ionomer Membranes},
author = {N. Gavish},
author = {J. Jones},
author = {Z. Xu},
author = {A. Christlieb},
author = {K. Promislow},
journal = {Polymers},
volume = {4},
pages = {630\ndash 655},
year = {2012},
}

 \bib{GM}{article}{
title = {Theory of biological pattern formation},
author = {A. Gierer and W. Meinhardt},
journal = {Kybernetik},
volume = {12},
pages = {30\ndash139},
year = {1972},
}

 \bib{Goldenfeld1}{inproceedings}{
title = {Lectures on phase transitions and the renormalization group},
author = {N. Goldenfeld},
booktitle = {Frontiers in Physics, Vol. 85},
pages = {123\ndash167},
publisher = {Addison-Wesley, Reading, MA},
year = {1992},
}

 \bib{Goldenfeld3}{article}{
title = {Intermediate asymptotics and renormalization group theory},
author = {N. Goldenfeld},
author = {O. Martin},
author = {Y. Oono},
journal = {J Scientific Comput},
volume = {4},
pages = {355\ndash372},
year = {1989},
}

 \bib{Goldenfeld2}{article}{
title = {Anomalous dimensions and the renormalization group in a nonlinear diffusion process},
author = {N. Goldenfeld},
author = {O. Martin},
author = {Y. Oono},
author = {F. Liu},
journal = {Phys. Rev. Lett},
volume = {64},
pages = {1361\ndash1364},
year = {1990},
}

 \bib{GS}{article}{
title = {Autocatalytic reactions in the isothermal, continuous stirred tank reactor - oscillations and instabilities in the system $A+2B \to 3B$; $B \to C$},
author = {P. Gray},
author = {S.K. Scott},
journal = {Chem. Engineering Science},
volume = {39},
pages = {1087\ndash1097},
year = {1984},
}

 \bib{Guha}{article}{
title = {Front propagation in a noisy, nonsmooth excitable media},
author = {M. Guha},
author = {K. Promislow},
journal = {Disc. Cont. Dyn. Sys.},
volume = {23},
pages = {617\ndash638},
year = {2009},
}

 \bib{Henry}{book}{
    author={D. Henry},
    title={Geometric Theory of Semilinear Parabolic Equations},
    publisher = {Springer-Verlag},
	  year = {1981},
	  }

 \bib{Horm}{book}{
    author={L. H$\Ddot{o}$rmander},
    title={The Analysis of Linear Partial Differential Operators 1�4},
    publisher = {Springer, New York},
	  year = {1985},
	  }

 \bib{Howard-07}{article}{
title = {Nonlinear stability for multidimensional fourth-order shock fronts},
author = {P. Howard},
author = {C. Hu},
journal = {Arch. Ration. Mech. Anal.},
volume = {181},
pages = {201\ndash260},
year = {2006},
}

 \bib{ironward}{article}{
title = {The stability of spike solutions of the one-dimensional Gierer-Meinhardt model},
author = {D. Iron},
author = {M. J. Ward},
author = {J. Wei},
journal = {Phys. D},
volume = {150},
pages = {25\ndash62},
year = {2001},
}

\bib{Kato-70}{article}{
title={Linear evolution equations of hyperbolic type, I},
author={T. Kato},
journal={J. Fac .Sci. Univ. Tokyo, Sec. I},
volume={17},
pages={241\ndash258},
year={1970},
}

\bib{Kato-73}{article}{
title={Linear evolution equations of hyperbolic type, II},
author={T. Kato},
journal={J. Fac .Sci. Univ. Tokyo, Sec. I},
volume={25},
pages={648\ndash665},
year={1973},
}

 \bib{spikeGM1}{article}{
title = {Reduced wave Green's functions and their effect on the dynamics of a spike for the Gierer-Meinhardt model},
author = {T. Kolokolnikov},
author = {M. J. Ward},
journal = {European J. Appl. Math},
volume = {14},
pages = {513\ndash545},
year = {2003},
}

 \bib{MooreProm}{article}{
title = {The semistrong limit of multipulse interaction in a thermally driven optical system},
author = {R. O. Moore},
author = {K. Promislow},
journal = {J. Diff. Eq.},
volume = {245},
pages = {1616\ndash1655},
year = {2008},
}

 \bib{RGProm}{article}{
title = {A renormalization method for modulational stability of quasi-steady patterns in dispersive systems},
author = {K. Promislow},
journal = {SIAM J. Math. Anal.},
volume = {33},
pages = {1455\ndash1482},
year = {2002},
}

 \bib{Sandstede}{inproceedings}{
title = {Stability of travelling waves},
booktitle = {Handbook of Dynamical Systems, II},
editor = {B. Fiedler},
author = {B. Sandstede},
publisher = {North-Holland, Amsterdam},
pages = {983\ndash1055},
year = {2002},
}

\bib{SS-99}{article}{
title = {Essential instability of pulses and bifurcations to modulatedtravelling waves},
author = {B. Sandstede},
author = {A. Scheel},
journal = {Proc. Roy. Soc. Edinburgh Sect. A},
volume = {129},
pages = {1263\ndash1290},
year = {1999},
}

 \bib{SS-00}{article}{
title = {Spectral stability of modulated travelling waves bifurcating near essential instabilities},
author = {B. Sandstede},
author = {A. Scheel},
journal = {Proc. Roy. Soc. Edinburgh Sect. A},
volume = {130},
pages = {419\ndash448},
year = {2000},
}

\bib{ScheelWright}{article}{
title = {Colliding dissipative pulses -- The shooting manifold},
author = { A. Scheel},
author = {J.D. Wright},
journal = { J. Diff. Equations},
volume = {245},
pages = {59\ndash79},
year={2008},
}

 \bib{Turing}{article}{
title = {The chemical basis of morphogenesis},
author = {A.M. Turing},
journal = {Phil. Trans. R. Soc.},
volume = {237},
pages = {37\ndash72},
year = {1952},
}

 \bib{RGPVH}{article}{
title = {Front interactions in a three-component system},
author = {P. van Heijster},
author = {A. Doelman},
author = {T. Kaper},
author = {K. Promislow},
journal = {SIAM J. on Appl. Dyn. Sys.},
volume = {9},
pages = {292\ndash 332},
year = {2010},
}

 \bib{VHSand}{article}{
title = {Planar radial spots in a three-component FitzHugh-Nagumo system},
author = {P. van Heijster},
author = {B. Sandstede},
journal = {J. Nonlinear Sci.},
volume = {21},
pages = {705\ndash745},
year = {2011},
}

 \bib{WardWei}{article}{
title = {Hopf bifurcation of spike solutions for the shadow Gierer�Meinhardt model},
author = {J. Wei},
journal = {European J. Appl. Math},
volume = {14},
pages = {677\ndash711},
year = {2003},
}

 \bib{spikeGM2}{article}{
title = {On single interior spike solutions of the Gierer-Meinhardt system: uniqueness and spectrum estimates},
author = {J. Wei},
journal = {European J. Appl. Math},
volume = {10},
pages = {353\ndash378},
year = {2003},
}

 \bib{spikeGS1}{article}{
title = {Pattern formations in two-dimensional Gray-Scott model: existence of single-spot solutions and their stability},
author = {J. Wei},
journal = {Phys. D},
volume = {148},
pages = {20\ndash48},
year = {2001},
}

 \bib{spikeGS2}{article}{
title = {Asymmetric spotty patterns for the Gray-Scott model in $\mathbb{R}^2$},
author = {J. Wei},
author = {M. Winter},
journal = {Stud. Appl. Math.},
volume = {110},
pages = {63\ndash102},
year = {2003},
}

 \bib{Zumbrun-02}{article}{
title = {Viscous and inviscid stability of multidimensional planar shock fronts},
author = {K. Zumbrun},
author = {D. Serre},
journal = {Indiana Univ. Math. J.},
volume = {48},
pages = {937\ndash992},
year = {1999},
}

 \bib{Zumbrun-11}{inproceedings}{
title = {Stability and dynamics of viscous shock waves},
author = {K. Zumbrun},
booktitle = {IMA Vol. Math. Appl. 153 - Nonlinear conservation laws and applications},
pages = {123\ndash 167},
publisher = {Springer, New York},
year = {2011},
}

\end{biblist}
\end{bibdiv}
\end{document}